\documentclass[11pt,reqno]{amsart}
\usepackage[utf8]{inputenc}
\usepackage[T1]{fontenc}
\usepackage{lmodern}
\usepackage[english]{babel}
\usepackage[margin=2.55cm]{geometry}
\usepackage[T1]{fontenc}
\usepackage{lmodern,microtype,amsmath,amssymb,amsthm,mathtools,stmaryrd}
\usepackage{enumitem,booktabs,tabularx,hyperref}
\hypersetup{colorlinks=true}

\numberwithin{equation}{section}

 \usepackage[maxbibnames=99,backend=biber, sorting=nyt, url=false, giveninits=true, isbn=false, style=alphabetic]{biblatex}
\newtheorem{theorem}{Theorem}[section]
\newtheorem*{theorem*}{Theorem}
\newtheorem{proposition}[theorem]{Proposition}
\newtheorem{lemma}[theorem]{Lemma}
\newtheorem{corollary}[theorem]{Corollary}
\theoremstyle{remark}\newtheorem{remark}[theorem]{Remark}
\newtheorem{definition}[theorem]{Definition}
\newcommand{\N}{\mathbb N}
\newcommand{\R}{\mathbb R}
\newcommand{\HH}{\mathcal H}

\newcommand{\diver}{\operatorname{div}}
\newcommand{\reach}{\operatorname{reach}}
\newcommand{\spt}{\operatorname{spt}}
\newcommand{\Lip}{\operatorname{Lip}}

\newcommand{\cJ}{\mathcal J}

\title{The Brakke-BV structure of flat flows}

\author[D. De Gennaro]{Daniele De Gennaro}
\address{Dipartimento di Matematica, Universit\`{a} degli Studi di Milano, Via Saldini 50, I-20133 Milano (MI), Italy}
\email{daniele.degennaro@unimi.it}

\author[S. Stuvard]{Salvatore Stuvard}
\address{Dipartimento di Matematica, Universit\`{a} degli Studi di Milano, Via Saldini 50, I-20133 Milano (MI), Italy}
\email{salvatore.stuvard@unimi.it}

\date{\today}

\begin{document}

\begin{abstract}
    We show that, in ambient dimension at most seven, varifold limits of the minimizing movements scheme introduced by Almgren–Taylor–Wang and Luckhaus–Sturzenhecker are integral Brakke flows. As a consequence, we show that $L^1$-limit points of the scheme, usually known as flat flows, are generalized BV flows. The result is unconditional and, in particular, remains valid even in the absence of energy convergence and in the presence of fattening.
\end{abstract}

\maketitle\vspace{-0.9cm}

\tableofcontents

\section{Introduction}
\subsection{Minimizing movements and flat flows}
The mean curvature flow is a geometric evolution flow which arises as the  gradient flow of the area functional. It is therefore one of the most fundamental geometric flows involving extrinsic
curvatures. 
A smooth family of sets $\{E_t\}_{t\geq 0}$ with $E_t\subset \mathbb{R}^{n+1}$ evolves
by mean curvature if the normal velocity $V$ of $\partial E_t$ satisfies
\begin{equation}\label{intro:MCF}
    V(x,t)=\mathbf H_{E_t}(x),\quad \text{for all }x\in\partial E_t\,,
\end{equation}
where $\mathbf H_E=-H_E\nu_E$ is the mean curvature vector of a smooth set $E$.
This evolution has the formal structure of the
$L^2$-gradient flow of the perimeter functional $P(E)$: as long as the solution remains smooth, it satisfies
\[
    \frac{d}{dt} P(E_t)
    =
    -\int_{\partial E_t} |\mathbf{H}_{E_t}|^2
    \,d\mathcal{H}^n\,.
\]
It is well-known that smooth mean curvature flows typically develop
singularities in finite time. This has led to several notions of weak
solution, including Brakke flows \cite{Bra}, level-set solutions
\cite{EvaSpr,CheGigGot,ChaMorPonCPAM17}, and variational or distributional
formulations  \cite{LS,MSS}. A basic problem in the
subject is to understand to what extent these different weak formulations solve the smooth evolution equation \eqref{intro:MCF}  and how they are
related after singularities occur.

A natural way of preserving the variational structure beyond singularities
is the minimizing-movements method \cite{DeG,AmbGigSav}. The main issue in the present setting  is that the gradient flow identification is only formal, and one needs to introduce a suitable proxy for the $L^2$-distance between hypersurfaces when defining the discretization scheme. 
For mean curvature flow this approach was introduced independently by
Almgren--Taylor--Wang and Luckhaus--Sturzenhecker \cite{ATW,LS}. 
Given a time step $h>0$ and a previous set $F$, one iteratively minimizes
\begin{equation}\label{intro:scheme}
    E\mapsto P(E)+\frac{1}{h}D(E,F)\,,
    \qquad
    D(E,F)
    :=
    \int_{E\triangle F}\operatorname{dist}(x,\partial F)\,dx\,.
\end{equation} 
The quantity $D(E,F)$ is a geometric proxy for the  squared distance appearing
in the implicit Euler scheme for a gradient flow. Indeed, if $\partial E$
is obtained from $\partial F$ by a normal displacement $hV$, then formally
\[
    \frac{1}{h}D(E,F)
    \simeq
    \frac{h}{2}
    \int_{\partial F}|V|^2\,d\mathcal{H}^n\,.
\]
Given an initial set $E_0$, and upon fixing the time step $h > 0$, one iteratively defines a sequence of sets $\{E^h_k\}_{k\ge 0}$ so that $E^h_0=E_0$ and $E^h_{k+1}$ minimizes \eqref{intro:scheme} with $F$ replaced by $E^h_k$ for $k\ge 0$.  The approximate evolution, referred to as the \textit{discrete flow}, is then the family of sets $E^h_t:=E^h_{[t/h]}$ for $t\ge 0$, where $[\cdot]$ denotes the integer part. Any cluster point in $L^1$ of $\{E^h_t\}_{t\ge 0}$ is then called a \textit{flat flow} solution to \eqref{intro:MCF}.
It has been known since \cite{ATW,LS} that the main difficulty is not to construct such limit points of the
scheme, but to identify the equation they satisfy, if any.

Indeed, standard estimates give compactness of the piecewise-constant
interpolations in $L^1$, together with a uniform $1/2$-H\"older estimate in
time.
The delicate point is that one needs to work with $L^1$ convergence of the sets, whereas the Euler--Lagrange equation and
the velocity law involve
curvature terms and cannot be passed to the limit to determine the evolution speed.  
Luckhaus--Sturzenhecker \cite{LS} proved that, under the additional assumption that
the time-integrated perimeters converge
\begin{equation}\label{intro:ene-conv}
    \int_0^T P(E^h_t)\,dt\to \int_0^T P(E_t)\,dt, \quad \text{as }h\to 0\,,
\end{equation}
then flat flows satisfy a
distributional formulation of mean curvature flow. Their notion of solution is usually referred to as  \textit{BV solution} to  \eqref{intro:MCF}.
Note that   assumption \eqref{intro:ene-conv} is nontrivial to verify, as $L^1$-convergence of the sets only implies lower semicontinuity of the perimeters as  the cancellation mechanism of currents can happen.

Variants of this argument have subsequently been developed for the
volume-preserving flow  \cite{MSS,Jul24}, for anisotropic and inhomogeneous flows  \cite{ChaDegMor24} and for the capillary flow  \cite{BelKho}, all  conditional on   \eqref{intro:ene-conv}.
In the mean-convex setting, De Philippis--Laux \cite{DePLau} proved the validity of \eqref{intro:ene-conv}, thereby removing the energy-convergence hypothesis in this case. 
In the recent contribution  \cite{LauPoi}, the authors considered a modified minimizing-movement scheme based on a more regularizing distance proxy, and obtain  unconditional convergence towards  De Giorgi varifold solutions to \eqref{intro:MCF}, as introduced in  \cite{HenLau}, in dimension two and three. 

In the present two-phase setting, where the comparison principle is available, it has been recognized by Chambolle \cite{Chambolle} that the scheme \eqref{intro:scheme} can be linked to the level set method of \cite{EvaSpr,CheGigGot}. 
Assuming the non-fattening condition, by results from the fundamental works \cite{EvansSpruckIV} and \cite{Ilmanen94} it is then possible to show that the Kuratowski/Hausdorff limit of discrete flows is a (unit density) Brakke flow. Again, this is  a conditional result  as it requires the non-fattening condition. Moreover, even along a common subsequence, Kuratowski limits in general only contain flat flows:  the strict inclusion is linked again to the disappearance of boundary components in the $L^1$-limit.  
Important classes of initial data for which non-fattening is known include star-shaped \cite{Soner93} and mean-convex sets  \cite{EvaSpr,Whi00}. We also mention  \cite{LauUll24} which shows that  almost every level set of level sets solutions are BV solutions.

Whether it is possible to prove the validity of a suitable evolution law for the limits of discrete flows originating from  the original 
scheme \eqref{intro:scheme} without assuming convergence of the perimeters \eqref{intro:ene-conv} or non-fattening has remained an open problem.

\subsection{Generalized BV flows and main result}

A different perspective is instead to look at the  associated  varifold measures and consider their limit in the varifold sense. It is not difficult to see that, denoting $\{E_t\}_{t \ge 0}$ a flat flow and $\{\mu_t\}_{t \geq 0}$ the mass measures corresponding to the limit varifolds $V_t$,  it holds 
\[
    |D\chi_{E_t}|\leq \mu_t\,.
\]
Suppose that the validity of a Brakke-type evolution law can be proved for the $V_t$'s. Then one may ask whether the mean curvature vector of the evolving varifolds also represents, in a suitable sense, the normal velocity field responsible for the motion of the (reduced) boundaries of $E_t$. This viewpoint is precisely captured by the notion of \emph{generalized BV flow}
introduced by Tonegawa and the second-named author in \cite{StuTon}. A generalized BV flow
is, roughly speaking, a one-parameter family of finite-perimeter sets $\{E_t\}_{t\ge 0}$ so that there exists a Brakke flow $\{V_t\}_{t\ge 0}$ with
\[ 
    |D\chi_{E_t}|\leq \|V_t\| \quad \text{for a.e. }t\ge0\,,
\]
and such that the sets $E_t$ are a BV solution in the spirit of \cite{LS}, with velocity field given by the mean curvature vector of the varifolds $\mathbf H_{V_t}$ instead of the mean curvature vector $\mathbf H_{E_t}$ of the boundaries of $E_t$. This philosophy has a
direct antecedent in Ilmanen's notion of \emph{enhanced motion}
\cite{Ilmanen94}, where a space-time \emph{undercurrent} is coupled with a
Brakke flow (\emph{overflow}) whose mass dominates the slices of the current. The
generalized BV formulation provides a 
realization of this principle, in which the coupling is expressed directly
through the transport identity for the evolving sets.

Generalized BV flows have recently appeared as a natural meeting point of
several approximation theories for mean curvature flow.
Tonegawa and the second-named author constructed such flows through their multiphase Brakke
approximation \cite{StuTon}, building on the existence theory of
Kim--Tonegawa \cite{KimTon}. Tashiro
 proved the generalized BV property for Ilmanen's elliptic regularization  \cite{Ilmanen94} and Allen-Cahn limits \cite{Tas24,Tas25}, and subsequently gave 
abstract conditions under which
an $L^2$-flow (as defined in  \cite{MugRog}) coupled with  evolving sets  satisfies the generalized BV law  \cite{Tas26}.  

Brakke flows, level-set flows, and generalized BV flows retain different
information: the level-set formulation has a powerful comparison structure
but does not record multiplicity, while the Brakke measure retains
interfacial mass but by itself does not determine the evolution of the
enclosed sets. The generalized BV formulation couples precisely these
latter two pieces of information.
 
The main result of this paper shows the validity of the generalized BV property for the solutions selected by the
original minimizing movements scheme of \cite{ATW,LS}. More
precisely, let  $\{E_t^{h}\}_{t\ge 0}$ be a sequence of discrete flows converging as $h\to 0$ in
$L^1$ to a flat flow $E_t$. Consider the associated multiplicity-one varifolds
\[
    V_{E_t^{h}}
    :=
    \mathcal{H}^n\llcorner\partial^*E_t^{h}
    \otimes \delta_{T_x\partial^*E_t^{h}}\,.
\]
It can be proved that, after extraction of a subsequence, these varifolds converge weakly to a family of varifolds $\{V_t\}_{t\geq0}$. More precisely, there exists a family $\{V_t\}_{t \geq 0}$ so that $V_t$ is integral for a.e. $t$ and
\begin{align*}
    \|V_{E_t^h}\| &\rightharpoonup \|V_t\| \quad \mbox{as Radon measures in $\R^{n+1}$, for every $t \geq 0$}\,,\\
    V_{E_{t^h}} \otimes dt &\rightharpoonup V_t \otimes dt \quad \mbox{as measures on $\R^{n+1} \times G(n+1,n) \times (0,\infty)$}\,,
\end{align*}
where $G(n+1,n)$ is the Grassmannian of (unoriented) $n$-planes in $\R^{n+1}$.

We can now provide an informal statement of our main result.

\begin{theorem*}
Let $1\leq n\leq 6$, and let the initial datum $E_0 \subset \R^{n+1}$ be any bounded set of finite perimeter with $\mathcal{L}^{n+1}(\partial E_0)=0$. Then, for every flat flow $E_t$ obtained from the
minimizing-movements scheme and varifolds  $V_t$ obtained as above, the couple $(\{E_t\},\{V_t\})$ is a generalized BV flow. Specifically,  $\{V_t\}_{t\geq0}$ is an integral Brakke flow with
\begin{align}
|D\chi_{E_0}| = \|V_0\| &= \lim_{t \to 0^+} \|V_t\|\,, \label{intro:initial} \\
 |D\chi_{E_t}| &\le \|V_t\| \quad\qquad\text{for a.e. }t>0\,, \label{intro:domination}\\
 \partial\llbracket E_t \rrbracket &= [V_t] \quad\quad\quad \,\text{in the sense of currents $\mathrm{mod}(2)$, for a.e. $t > 0$}\,, \label{intro:comp}
\end{align}      
and the sets $E_t$ satisfy the following evolution law:  for every $0\leq s<t$ and every smooth compactly supported
test function $\phi=\phi(x,\tau)$,
\begin{equation}\label{intro:gen-BV}
    \int_{E_t}\phi(x,t)\,dx
    -
    \int_{E_s}\phi(x,s)\,dx
    =
    \int_s^t\int_{E_\tau}
    \partial_\tau\phi(x,\tau)\,dx\,d\tau
    +
    \int_s^t\int_{\partial^*E_\tau}
    \phi(x,\tau)\,
    \mathbf{H}_{V_\tau}\cdot\nu_{E_\tau}\,
    d\mathcal{H}^n\,d\tau \,,
\end{equation}
where $\mathbf H_{V_t}$ is generalized mean curvature vector of the varifold $V_t$.
\end{theorem*}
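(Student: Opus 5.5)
We prove the statement in four stages: discrete estimates, a discrete Brakke inequality, passage to the limit, and identification of the velocity.

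\emph{Step 1 (discrete estimates).} For each step $h$ we use the standard properties of minimizers $E^h_{k+1}$ of \eqref{intro:scheme}. They are $(\Lambda,r_0)$-minimizers of the perimeter at scales $r\lesssim\sqrt h$. In dimension $n+1\le 7$ this makes $\partial E^h_{k+1}$ a $C^{1,\alpha}$ (indeed $C^{2,\alpha}$) hypersurface, with no singular set. This is the only place where $n\le6$ enters. The Euler--Lagrange equation then reads
\[
H_{E^h_{k+1}}(x)=\frac{\mathrm{sd}_{E^h_k}(x)}{h}\quad\text{on }\partial E^h_{k+1}.
\]
We also use the dissipation estimate
\[
P(E^h_{k+1})+\frac1h D(E^h_{k+1},E^h_k)\le P(E^h_k),
\]
the $L^\infty$ bound $\sup_{\partial E^h_{k+1}}|\mathrm{sd}_{E^h_k}|\lesssim\sqrt h$, and the $L^2$ estimate
\[
\sum_k h\int_{\partial E^h_{k+1}}\Big|\frac{\mathrm{sd}_{E^h_k}}h\Big|^2\lesssim P(E_0).
\]
The last one follows from density estimates together with the comparison between $D$ and $h\int|\mathrm{sd}/h|^2$.

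\emph{Step 2 (discrete Brakke inequality).} This is the heart of the argument. Fix $\phi\ge0$ smooth. We aim to prove
\[
\int\phi\,d\|V_{E^h_{k+1}}\|-\int\phi\,d\|V_{E^h_k}\|\le h\int_{\partial E^h_{k+1}}\Big(-\phi|H|^2+\nabla\phi\cdot\mathbf H\Big)+\text{err}_k,
\qquad \sum_k\text{err}_k\to0.
\]
To do this, compare $E^h_{k+1}$ with competitors that are weighted versions of $E^h_k$: use a localized comparison $E^h_{k+1}\cup(\ldots)$, or the family $\{\mathrm{sd}_{E^h_k}<\lambda\phi\}$ averaged in $\lambda$ via coarea. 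This gives weighted perimeter inequalities of the form $\int\phi\,dP$ with dissipation weighted by $\phi$. The Euler--Lagrange equation lets us rewrite the dissipation as $\int\phi\,|H|^2$ up to errors. The errors are controlled by $\|\nabla\phi\|_\infty\sqrt h$ times the $L^2$ curvature bound, together with the $C^{1,\alpha}$ flatness at scale $\sqrt h$.

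I expect this to be the main obstacle. The difficulty is that the pointwise identification of the normal displacement with $h\mathbf H$ holds only at scale $\sqrt h$. Moreover, the errors must be summable \emph{without} energy convergence. My first attempt would be to mimic the Kim--Tonegawa discrete Brakke estimate, using the smooth regularity of the minimizers to control tilt errors.

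\emph{Step 3 (limits).} Apply the Brakke compactness argument. The monotone quantities $\int\phi\,d\|V_{E^h_t}\|-Ct$ give convergence of $\|V_{E^h_t}\|$ for every $t$ by a diagonal argument over a dense set of $\phi$ and a countable set of times. The $L^2$ curvature bound together with Allard's compactness (integrality theorem) gives integrality of $V_t$ for a.e.\ $t$. Lower semicontinuity then passes the discrete inequality to the Brakke inequality for $V_t$. The relation \eqref{intro:domination} follows from lower semicontinuity of the total variation under $L^1$ convergence. The identity \eqref{intro:comp} follows from mod $2$ flat compactness: $\partial\llbracket E^h_t\rrbracket=[V_{E^h_t}]$ mod 2, and both sides converge. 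The initial condition \eqref{intro:initial} follows from $P(E^h_t)\le P(E_0)$ and lower semicontinuity, since together they force mass convergence at $t=0^+$.

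\emph{Step 4 (the BV law \eqref{intro:gen-BV}).} Define the discrete velocity $v^h:=\mathrm{sd}_{E^h_k}/h$ on $\partial E^h_{k+1}$. A discrete identity relates $\int_{E^h_{k+1}}\phi-\int_{E^h_k}\phi$ to $h\int_{\partial E^h_{k+1}}\phi\, v^h$, with error $o(1)$ after summing (this is the Luckhaus--Sturzenhecker computation). The measures $v^h\nu\,\mathcal H^n\llcorner\partial E^h_t\,dt$ are bounded in $L^2$ and converge to some $\mathbf w$ in $L^2(\|V_t\|dt)$. By the Euler--Lagrange equation, $v^h\nu=\mathbf H_{E^h}$, so passing to the limit in the first variation formula identifies $\mathbf w=\mathbf H_{V_t}$. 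This requires no energy convergence, because the first variation is linear in the varifold.

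It remains to show that the limit of $\int\phi\, v^h\nu\cdot\nu_{E^h}$ is the integral over $\partial^*E_t$ against $\nu_{E_t}$. Write this term as the pairing of the oriented current $\partial\llbracket E^h\rrbracket$ with $\phi\,v^h$. Cancellations in the limit occur exactly on even-multiplicity parts. Using Brakke's perpendicularity theorem, and the fact that $\mathbf H_{V}$ is determined by $\|V\|$ a.e., the limit pairing is supported on $\partial^*E_t$, where it equals $\mathbf H_{V_t}\cdot\nu_{E_t}$. This step follows the Stuvard--Tonegawa argument, using \eqref{intro:comp}.
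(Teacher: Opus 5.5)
\textbf{Verdict: there is a genuine gap.} Your Step 2, which you rightly call the heart of the proof, is where it breaks down. It is not just vague: the mechanism you sketch produces the wrong inequality, for two concrete reasons.

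\emph{The factor one half.} Comparing $E^h_{k+1}$ only with (localized versions of) $E^h_k$ gives at best
\[
P_\phi(E^h_{k+1})-P_\phi(E^h_k)\le -\tfrac1h D_\phi(E^h_{k+1},E^h_k)+\dots
\]
In the graphical regime $D\approx\frac12\int w^2$, while the Euler--Lagrange equation gives $|H|\approx |w|/h$. Hence $\frac1h D_\phi\approx \frac h2\int\phi|H|^2$. Rewriting the dissipation through the Euler--Lagrange equation therefore yields at most $-\frac h2\int\phi|H|^2$. The limit would be a Brakke-type inequality with coefficient $\frac12$ in front of $\phi|\mathbf H|^2$, which is not Brakke's inequality. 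This is the classical factor-two loss of minimizing movements, usually repaired by the De Giorgi interpolant, and your plan has no source for the missing half.

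\emph{Unsigned localization.} Cut-and-paste localization, e.g.\ replacing $E^h_{k+1}$ by $E^h_k$ on $\{\phi>\lambda\}$ and integrating in $\lambda$, produces the unsigned error $\int|\nabla\phi|\,|\chi_{E^h_{k+1}}-\chi_{E^h_k}|$. Its sum over $k$ stays bounded but does not tend to zero; for a shrinking sphere it converges to a weighted swept volume. So the signed transport term $\nabla\phi\cdot\mathbf H$ is lost.

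\textbf{What the paper uses instead.}
\begin{itemize}
\item \emph{Localization.} It uses Chambolle's ROF reformulation. The single field $z$ with $|z|\le1$ and $\diver z=(u-d_F)/h$ calibrates \emph{every} minimizer, i.e.\ $[z,\nu_E]=1$ on $\partial^*E$, even under fattening. This gives the signed inequality \eqref{local} against an arbitrary competitor $G$, with transport term $\int(\chi_E-\chi_G)\,z\cdot\nabla\varphi$. That term is identified with $-h\,\delta P(E)[\nabla\varphi]$ because the calibration defects $S^h_k$ are summable, which forces $\sum_k\int_{E^h_k\triangle E^h_{k-1}}|z^h_k-\nabla d_{E^h_{k-1}}|\to0$.
\item \emph{The missing half.} It tests \eqref{local} with $G=\Phi^b_h(F)$, $F=E^h_{k-1}$, the time-$h$ flow of a \emph{fixed} smooth field $b$. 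This contributes $h\,\delta P(F)[\varphi b]+\frac h2\int\varphi|b\cdot\nu_F|^2$. Together with the sharp lower bound on the dissipation (Lemma~\ref{lem:diss-bd}, where keeping the sign of $d_F$ is essential), this gives the full quadratic term.
\item \emph{Optimization after the limit.} The square is completed and $b$ is optimized only after $h\to0$. A discrete choice $b\approx\mathbf H_{E^h}$ would have unbounded $C^2$ norm and non-summable errors.
\item \emph{Normal coordinates.} All of this requires normal coordinates at scale $\sqrt h$, i.e.\ the bounds $|A|\le C/\sqrt h$ and $\reach\ge c\sqrt h$, which you only allude to.
\end{itemize}

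\textbf{Step 4 has a related gap.}
\begin{itemize}
\item The Luckhaus--Sturzenhecker identity between $\int(\chi_{E^h_{k+1}}-\chi_{E^h_k})\phi$ and $\int_{\partial E^h_{k+1}}\phi\,d_{E^h_k}$, with summable error, needs control of the tilt between $\nu_{E^h_{k+1}}$ and $\nabla d_{E^h_k}$. That control is exactly what energy convergence provides in their setting.
\item The weak limit of $\mathbf H_{E^h_t}\cdot\nu_{E^h_t}\,\|V_{E^h_t}\|\,dt$ is a product of two weakly converging objects. Your multiplicity heuristic does not identify it.
\item The paper instead proves only a one-sided bound, which gives $D_t\chi_{\mathcal E}\ll\|V_t\|\,dt$ with $L^2$ density. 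It then applies Tashiro's criterion (Theorem~\ref{thm:L2-to-BV}), using the $L^2$-flow structure of the Brakke flow.
\end{itemize}
Citing Stuvard--Tonegawa is the right instinct, but its hypotheses are precisely what you still need to verify.
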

This theorem provides, to the best of our knowledge, the first unconditional
consistency result for the original minimizing movements scheme for general admissible initial data. It resolves, in the
natural generalized BV formulation, the longstanding obstruction caused by
the possible loss of perimeter along the discrete approximations. Rather
than imposing convergence of the perimeters, the theorem retains the lost
interfacial mass in the limiting Brakke measure. It also avoids the conditional results holding for level-set solutions, where the absence of  fattening  is required. The conclusion therefore
applies to every flat flow selected by the scheme, without assuming
convergence of the energies, uniqueness of the discrete minimizers, or
absence of fattening. 

The generalized BV conclusion is not merely a technical weakening of the
Luckhaus--Sturzenhecker formulation. Since interface cancellations may happen in the limit, 
ordinary BV solution cannot in general record all the interfacial mass
generated by the approximation. The pair $(\{E_t\},\{V_t\})$ is therefore the
natural object selected by the scheme: $E_t$ describes the limiting sets,
whereas $V_t$ retains the limiting interfacial measure, including multiplicity invisible to $L^1$ convergence, and evolves by
Brakke's inequality. The relationship between the two pieces of information is expressed through \eqref{intro:initial}-\eqref{intro:gen-BV}. \eqref{intro:initial} states that the initial datum, intended in the sense of perimeter measure, is correctly captured by the mass measure of the initial varifold in the Brakke flow, and that the masses of varifolds in the Brakke flow are continuous at the initial time. \eqref{intro:domination} states that the varifold masses dominate the flat flow, so that the former cannot disappear so long as the latter is non-trivial. \eqref{intro:comp} states that the varifolds of the Brakke flow and the currents associated with the reduced boundaries of the flat flow are compatible as $\mathrm{mod}(2)$ flat chains in the sense of White \cite{Whi09}, so that $V_t=V_{E_t} + 2W_t$ for some integral varifold $W_t$, for a.e. $t > 0$: the multiplicity of $V_t$ is therefore odd on $\partial^* E_t$ and even on the complement. This justifies the intuition that \eqref{intro:ene-conv} is violated precisely if cancellations of the oriented boundaries happen. Finally, \eqref{intro:gen-BV} is the generalized BV identity, showing that the generalized mean curvature vector of the Brakke flow acts as the normal velocity field responsible for the motion of the reduced boundaries of the flat flow. 

The main theorem above is proved in Theorem \ref{thm:brakke} and Theorem \ref{thm:main}. Two corollaries are worth recording here: see Corollary \ref{cor:extinction} and Corollary \ref{cor:entropy-two}, respectively. First, by combining Brakke's inequality with the generalized BV identity one obtains, precisely as in \cite{GigaYamauchi93,StuTon}, a sharp lower bound on the extinction time of the Brakke flow; see the very recent \cite{cdcy} for related results on the upper bound, instead. Second, one can prove that the discrepancy between perimeter and varifold measures is zero if the perimeter measure of the initial datum has entropy $\lambda(|D\chi_{E_0}|) < 2$ in the sense of Colding-Minicozzi \cite{ColMin12}: in this case, \eqref{intro:ene-conv} holds precisely as a consequence of the approximation scheme rather than as an assumption, and the Brakke flow is unit density, thus subject to Brakke's partial regularity \cite{KasTon,STend}.

\subsection{Strategy of the proof}

The main novelty of the proof is a   derivation of Brakke's inequality by duality and 
directly from the dissipation of the original scheme of \cite{ATW,LS}.

As previously mentioned, past consistency results either assume \eqref{intro:ene-conv} or   rely on additional interpolations to recover the correct speed in the limit. This latter 
variational construction is usually referred to as the De Giorgi interpolant \cite{DeG,AmbGigSav}.   Such variational derivations of
Brakke-type inequalities  appear for instance in the analysis of threshold dynamics
by Laux--Otto \cite{LauOtt},  conditional to an energy convergence assumption similar to \eqref{intro:ene-conv}.

The mechanism  we use  is
different.  Rather than attempting to identify
the discrete velocity directly, we fix a smooth vector field $b$ and compare
the new iterate $E_k^h$ with the artificial evolution obtained by
transporting the preceding iterate $E^h_{k-1}$ for time $h$ along the flow
generated by $b$. This produces an inequality for every smooth $b$. After
passing to the varifold limit, a density argument allows us to optimize over
vector fields $b$. Completing the square then
identifies the optimal normal component with the generalized mean curvature
$\mathbf{H}_{V_t}$ and yields Brakke's inequality. We remark that the use of short smooth deformations is reminiscent of the discrete
deformation arguments underlying Brakke's original existence theory and the
construction of Kim--Tonegawa \cite{Bra,KimTon}. 

Our  two main  new technical improvements make this argument possible. The first one is  a   curvature estimate for every
discrete minimizer, obtained from the stability of the incremental problem. This provides the geometric control needed to employ normal coordinates at the natural scale $\sqrt h$. This is where we need to assume the dimension restriction $1 \leq n \leq 6$, which guarantees that the discrete minimizers are sufficiently smooth. The second is a common calibration field which simultaneously calibrates all the minimizers in \eqref{intro:scheme}.
The calibration is  used    to derive a localized comparison inequality that allows one to recover the correct quadratic dissipation without  introducing a different   interpolation. Both results appear to be of independent interest.

We now describe these  two technical  ingredients in more detail. The first is the
curvature estimate
\[
    \|A_{E_k^h}\|_{L^\infty(\partial E_k^h)}
    \leq
    \frac{C}{\sqrt{h}}.
\]
It is obtained from the stability inequality satisfied by each discrete
minimizer. If the estimate failed, a point-picking argument and a blow-up at a point of maximal curvature
would produce a complete stable minimal hypersurface with Euclidean area
growth and nonzero normalized curvature. The stable Bernstein theorem
\cite{Bel} excludes such a hypersurface when $n\leq 6$. Combined with the
density and local comparison estimates for the incremental problem, the
curvature bound also yields an interior and exterior ball condition at
scale $\sqrt{h}$, which, to the best of our knowledge have never been established before in this field.
These estimates are related in spirit to the proof of  density estimates via stability of
\cite{Jul24} and to the analysis of the ball condition for smooth flows
developed in \cite{JulNii24}. 

The second ingredient is a  calibration  for minimizers of the incremental problem.
Chambolle's level-set reformulation \cite{Chambolle} of the iterative  scheme \eqref{intro:scheme} associates to the
preceding iterate $F$ a function $u$ and a vector field $z$  satisfying
\[
    |z|\leq 1,
    \qquad
    \operatorname{div}z=\frac{u-d_F}{h},
    \qquad
    (z,Du)=|Du|\,,
\]
where $d_F$ is the signed distance from $\partial F$ (chosen so that it is negative at points in $F$), and where $(\cdot,\cdot)$ has to be understood in the sense of Anzellotti pairing. It is known \cite{Chambolle} that every minimizer $E$ of \eqref{intro:scheme} satisfies 
\[
\{u<0 \}\subseteq E\subseteq \{u\le 0\}\,,
\]
possibly with strict inequalities when fattening happens (that is, when $|\{u=0\}|>0$).
The crucial observation we make is that the \emph{same} field $z$ calibrates every
minimizer of the   incremental problem \eqref{intro:scheme}, including when the zero level set
of $u$ has positive measure.   This calibration yields a
localized comparison inequality  and it  replaces the
  estimates supplied by employing different variational interpolants.

We next explain schematically how these ingredients recover the sharp
quadratic coefficient in Brakke's inequality. Let
\[
    F=E^h_{k-1},
    \qquad
    E=E_k^h,
    \qquad
    G=\Phi_h^b(F),
\]
where $\Phi_h^b$ denotes the flow generated by $b$ at time $h$. By duality, the common calibration and the minimality of $E$  yield 
\[
    P(E)-P(F)
    \leq
    -\frac{1}{h}D(E,F)
    +\frac{1}{h}D(G,F)
    +h\,\delta P(F)[b]
    +\mathrm{Err}_k^h,
\]
where the errors are summable and vanish in the limit.   {This inequality holds no matter the vector field $b$. Moreover, thanks to the calibration we can easily derive a localized version of the previous inequality.} The dissipation cost of
the artificial motion $G$ can be computed in normal coordinates:
\[
    \frac{1}{h}D(G,F)
    =
    \frac{h}{2}
    \int_{\partial F}
        |b\cdot\nu_F|^2
        \,d\mathcal{H}^n
    +o(h).
\]
This provides one half of the quadratic term. The other half comes from the dissipation, as it is classical in the minimizing-scheme literature. Using the Euler--Lagrange equation together
with the tubular estimates  implied by the curvature  bounds, one
obtains
\[
    -\frac{1}{h}D(E,F)
    \leq
    h\,\delta P(E)[b]
    +
    \frac{h}{2}
    \int_{\partial F}
        |b\cdot\nu_F|^2
        \,d\mathcal{H}^n
    +o(h).
\]
The two terms with coefficient $1/2$ therefore combine into the full
quadratic contribution.

Carrying out this argument for the weighted quantities entering Brakke's
inequality gives, up to  errors, a discrete dual estimate of the form
\[
\begin{split}
    &\int_{\partial^*E_k^h}
        \varphi\,d\mathcal{H}^n
    -
    \int_{\partial^*E^h_{k-1}}
        \varphi\,d\mathcal{H}^n
    \\
    &\quad\leq
    h\Bigg[
        \delta P(E_k^h)[\varphi b]
        +\delta P(E^h_{k-1})[\varphi b]
        -\delta P(E_k^h)[\nabla\varphi]
        +
        \int_{\partial^*E^h_{k-1}}
            \varphi
            |b\cdot\nu_{E^h_{k-1}}|^2
            \,d\mathcal{H}^n
    \Bigg] + \mathrm{Err}^h_k
\end{split}
\]
The corresponding time-dependent inequality also contains the discrete
increments in time  of the test function.

After summation over the time steps and passage to the varifold limit, the first-variation terms combine into Brakke's term
\[
\int \varphi(\cdot,t) d\|V_t\|  - \int \varphi(\cdot,s) d\|V_s\|  \le  \int_s^t\int \Big( \partial_\tau \varphi + \nabla\varphi\cdot \mathbf H_{V_\tau} +\varphi(|b^\perp|^2- 2\mathbf H_{V_\tau}\cdot b  )\Big) d\|V_\tau\|d\tau .
\]
One can then complete the square 
\[
|b^\perp|^2- 2\mathbf H_{V_\tau}\cdot b = \left| \mathbf{H}_{V_t} - b^\perp \right|^2 - |\mathbf H_{V_t}|^2\,,
\]
infimize over (smooth) $b$ and use a density argument to recover Brakke's inequality with the sharp curvature
coefficient.

It remains to couple the resulting Brakke flow $\{V_t\}_{t\geq 0}$ with the
flat flow $\{E_t\}_{t\geq 0}$. 
The discrete dissipation bounds, combined with a scalar version of the preceding dual estimate, give absolute continuity of the space-time evolution of $E_t$ with respect to the limiting varifold measure. The generalized BV criterion of \cite{Tas26} then identifies the normal velocity of the phases with $\mathbf H_{V_t}\cdot\nu_{E_t}$ and yields the transport identity \eqref{intro:gen-BV}. Thus the same discrete approximations simultaneously produce
the Brakke evolution of the surviving interfacial measure and the BV
evolution of the underlying sets, while allowing for interfacial mass
which disappears in the $L^1$ limit.

\smallskip

The paper is structured as follows. In Section \ref{sec:prelim} we recall some preliminary estimates for discrete flows and show the curvature estimates. Then, in Section \ref{sec:Brakke} we study the calibration provided by Chambolle's scheme and deduce  Brakke's inequality. In the final  Section \ref{sec:main} we show that flat flows are generalized BV and deduce some corollaries.

\section{Preliminary estimates}\label{sec:prelim}
\subsection{Definition of the scheme and standard estimates}

Throughout, the ambient space is $\mathbb R^{n+1}$ and, unless
otherwise stated, $1\leq n\leq6$. The Lebesgue measure $\mathcal L^{n+1}$ is simply denoted $|\cdot|$, whereas $P(\cdot)$ denotes the perimeter functional, defined by
\[
P(F) := \sup\left\lbrace \int_F \diver X \, dx \, \colon \, X \in C^1_{c}(\R^{n+1};\R^{n+1}) \, \text{ s.t. } \|X\|_{C^0} \leq 1 \right\rbrace\, .
\]
If $U \subset \R^{n+1}$ is an open set, the relative perimeter $P(F;U)$ is defined with the obvious modification of the formula above. Given a set $F\subset\mathbb R^{n+1}$, the symbol $d_F$ will denote the signed distance function associated with $F$, defined by
\[
    d_F(x) := \mathrm{dist}(x,F) - \mathrm{dist}(x,\R^{n+1}\setminus F)\,,
\]
where, as usual,
\[
\mathrm{dist}(x,G) := \inf\left\lbrace |x-y|\,\colon\, y \in G\right\rbrace\,.
\]
In particular, $d_F(x) \geq 0$ if $x \notin F$ and $d_F(x) \leq 0$ if $x \in F$. 

The initial datum of the flow is a bounded set of finite perimeter $E_0 \subset \R^{n+1}$. We fix once and for all a bounded pointwise representative of $E_0$, so that the function $d_{E_0}$ can be given a meaning. In order to avoid trivial vanishing \footnote{For  $E_0=B_1\setminus \mathbb Q^{n+1}$, it holds $d_{E_0}\ge 0$ everywhere and thus the unique minimizer to \eqref{scheme} is the empty set. In particular, the flow is $E_t=\emptyset$ for all $t > 0$.}, we require additionally that $|\partial E_0|=0$, where $\partial$ denotes, as usual, topological boundary. For a set $F$ of (locally) finite perimeter, $\partial^*F$ denotes  reduced boundary; see \cite{Maggi}.

The minimizing movement scheme introduced in \cite{ATW,LS}, and further
studied for instance in \cite{MSS,Jul24}, is defined recursively by $E^h_0=E_0$ and, for $1\le k\in \N$
\begin{equation}\label{scheme}
 E^h_k\in\operatorname*{argmin}_{G}
 \left\{
 P(G)+\frac1hD(G,E^h_{k-1})
 \right\},
 \qquad
 D(G,F):=\int_{G\triangle F}|d_F|\,dx \,.
\end{equation}
In the above formula, the minimization is taken among sets $G$ of finite perimeter. For $k \geq 2$, as a result of the classical regularity theory for perimeter $(\Lambda,R)$-minimizers in low dimensions (see Lemma~\ref{lem:reg} below), we may always fix a unique open representative of $E^h_{k-1}$ so that $\partial^*E^h_{k-1}=\partial E_{k-1}^h$ and the latter is compact and of class $C^{2,\alpha}$. This choice of representative is set in place when computing $d_{E_{k-1}^h}$ for every $k \geq 2$. The number $h>0$ is the time discretization parameter, and $G \triangle F:=(G \setminus F) \cup (F \setminus G)$ is the symmetric difference between $G$ and $F$. We typically refer to $D(G,F)$ as the \emph{dissipation} between $G$ and $F$. Existence of minimizers is classically obtained via direct methods, see \cite{LS}; we remark that minimizers may not be unique.
Up to a constant independent of $G$, the minimizers in
\eqref{scheme} coincide with  the minimizers of
\[
 \mathcal J_F(G):=
 P(G)+\frac1h\int_Gd_F\,dx\,,
\]
since, as one can immediately verify,
\[
\int_{G \triangle F} |d_F| \, dx = \int (\chi_G - \chi_F) \, d_F\, dx = \int_G d_F \, dx - \int_F d_F\, dx\,.
\]

If $F=\emptyset$, we set the next iterate equal to $\emptyset$.
Accordingly, once an iterate is empty, all subsequent iterates are empty. 

\begin{definition}
    Consider  a bounded  set of finite perimeter $E_0$ as initial  datum and the time discretization parameter $h>0$.  The  \emph{discrete flow} starting from $E_0$ and with parameter $h$ is defined as $E^h_t:=E^h_{[t/h]}$ (where $[x]$ is the integer part of $x$) as long as the iterates are not empty,  with $E^h_k$ minimizing \eqref{scheme}. After the first time $t $ where $E^h_t=\emptyset$ we set  $E^h_s\equiv \emptyset$ for all $s\ge t$.
\end{definition}

For $\varphi \in C^0(\R^{n+1};\R_+)$, we define the weighted perimeter and dissipation
\[
 P_\varphi(G):=
 \int_{\partial^*G}\varphi\,d\mathcal H^n,
 \qquad
 D_\varphi(G,F):=
 \int_{G\triangle F}\varphi|d_F|\,dx\,.
\]

Our sign convention for the scalar mean curvature $H_E$ of a set $E$ with boundary of class $C^{2,\alpha}$ is
\[
 H_E=\diver_\tau\nu_E\,,
\]
where $\diver_\tau$ denotes tangential divergence, and $\nu_E$ is the outer normal vector field to $E$. More generally, if (the unit density varifold associated with) a set of finite perimeter $E$ has bounded and absolutely continuous first variation with respect to perimeter, then for any $X \in C^1_c(\R^{n+1};\R^{n+1})$ it holds
\begin{equation}\label{firstvar}
 \delta P(E)[X]
 =
 \int_{\partial^* E}\diver_\tau X\,d\mathcal H^n
 =
  -\int_{\partial^* E}\mathbf H_E\cdot X\,d\mathcal H^n=\int_{\partial^* E}H_E\,X\cdot\nu_E\,d\mathcal H^n \,,
\end{equation}
where $\mathbf H_E$ is the generalized mean curvature vector of $E$ and $H_E := -\mathbf H_E \cdot \nu_E$ is the generalized scalar mean curvature.

\medskip

We recall some standard estimates holding along the discrete flows. We refer to \cite{LS,MSS} for their proofs.

\begin{lemma}\label{lem:prel-est}
Let $E_0$ be a bounded set of finite perimeter. There exist dimensional constants $C>c>0, \gamma>1$ such that the following estimates
hold for $k\ge 1$:
\begin{align}
 P(E^h_{k})+\frac1hD(E^h_{k},E^h_{k-1})&\leq P(E^h_{k-1})\,,                                     
\nonumber\\
 \sup_{\partial E^h_{k}}|d_{E^h_{k-1}}|&\leq C\sqrt h\,,                              \nonumber\\
 \min\bigl\{|E^h_{k}\cap B_r(x)|\,,\ |B_r(x)\setminus E^h_{k}|\bigr\}
 &\geq cr^{n+1}\,,
 \qquad \, x\in\partial E^h_{k},\quad 0<r\leq\gamma\sqrt h\,,                   
 \label{density}\\
 cr^n\leq P(E^h_{k};B_r(x))&\leq Cr^n,
 \qquad \quad x\in\partial E^h_{k},\quad 0<r\leq\gamma\sqrt h\,,                   
 \label{area}\\
 |E^h_{k+1}\triangle E^h_{k}|
 &\leq C\left(
 \ell P(E^h_{k+1})+\frac1\ell D(E^h_{k+1},E^h_{k})
 \right),
 \qquad 0<\ell\leq\gamma\sqrt h\,,                         
\nonumber\\
 h\int_{\partial E^h_{k}}|H_{E^h_{k}}|^2\,d\mathcal H^n
 &\leq \frac ChD(E^h_{k},E^h_{k-1})\,.                                              
\nonumber
\end{align}
In particular, if $M:=P(E_0)$ and $Kh\leq T$, then
\begin{equation}\label{sums}
 \sup_{0\leq k\leq K}P(E^h_{k})\leq M\,,
 \quad
 \sum_{k=1}^KD(E^h_{k},E^h_{k-1})\leq hM\,,
 \quad
 \sum_{k=1}^Kh
 \int_{\partial E^h_{k}}|H_{E^h_{k}}|^2\,d\mathcal H^n
 \leq CM\,,
\end{equation}
and it holds 
\begin{equation}\label{volsum}
 \sum_{k=1}^K|E^h_k\triangle E^h_{k-1}|
 = |E_1^h \triangle E_0| + \sum_{k=2}^K|E^h_k\triangle E^h_{k-1}| \leq |E_1^h \triangle E_0|+ C(T+1)M\,.
\end{equation}
Moreover, the comparison principle holds and, in particular,  if $E_0\subseteq B_R$ for $R>0$ then $E^h_k\subseteq B_R$ for every $k\ge 1$.
\end{lemma}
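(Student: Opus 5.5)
The estimates in Lemma~\ref{lem:prel-est} are classical, and I would prove them in the order listed, each using the previous ones. Throughout I work with the equivalent functional $\mathcal J_F(G)=P(G)+\frac1h\int_G d_F$, where $F=E^h_{k-1}$ and $E=E^h_k$.

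\emph{Energy and dissipation.} The first inequality follows from comparing $E$ with the competitor $G=F$, which has $D(F,F)=0$. Summing it over $k$ telescopes and gives the first two bounds in \eqref{sums}.

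\emph{Distance bound.} For $\sup_{\partial E}|d_F|\le C\sqrt h$, argue by contradiction. Suppose $x\in\partial E\setminus F$ (say) with $d_F(x)=\rho\gg\sqrt h$. Then $\mathcal J_F$ restricted to $B_{\rho/2}(x)$ behaves like perimeter plus a volume term with coefficient at least $\rho/(2h)$. Comparing $E$ with $E\setminus B_r(x)$ for $r<\rho/2$ gives
\[
P(E\cap B_r(x))\le 2\mathcal H^n(E\cap\partial B_r(x))-\frac{\rho}{2h}|E\cap B_r(x)|.
\]
Combined with the isoperimetric inequality and an ODE argument in $r$, this forces $|E\cap B_r(x)|=0$ for some $r\sim\sqrt h$ once $\rho/h\gg 1/\sqrt h$. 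This contradicts $x\in\partial E$, with $\partial E$ taken as the support of $|D\chi_E|$. The case $x\in F\setminus E$ is symmetric.

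\emph{Density and area bounds.} Once the distance bound holds, $|d_F|\le C'\sqrt h$ on $B_{\gamma\sqrt h}(x)$ for $x\in\partial E$. On that ball $E$ is therefore a $(\Lambda,R)$-minimizer of perimeter with $\Lambda=C'/\sqrt h$ and $R=\gamma\sqrt h$, so $\Lambda R\le C'\gamma$ is bounded. The standard density estimates for $\Lambda$-minimizers (see \cite{Maggi}) then give \eqref{density} and \eqref{area}.

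\emph{Volume estimate.} This is the Luckhaus--Sturzenhecker slicing argument. Split $E^h_{k+1}\triangle E^h_k$ into the part where $|d_{E^h_k}|\ge\ell$ and the part where $|d_{E^h_k}|<\ell$. The first part has measure at most $\frac1\ell D$. The second part lies within distance $\ell$ of $\partial E^h_k$. Cover it by balls of radius $\ell$ centred on $\partial E^h_{k+1}$ or $\partial E^h_k$, and use \eqref{density}--\eqref{area} with a Besicovitch/Vitali covering to bound its measure by $C\ell P$. This is where the restriction $\ell\le\gamma\sqrt h$ is needed.

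\emph{Curvature bound.} The Euler--Lagrange equation is $H_E=-d_F/h$ on $\partial E$; it is meaningful because $\partial E$ is $C^{2,\alpha}$ by regularity of $\Lambda$-minimizers in dimension $n\le6$ (Lemma~\ref{lem:reg}). Hence
\[
h\int_{\partial E}H_E^2=\frac1h\int_{\partial E}d_F^2 .
\]
The right-hand side is controlled by $\frac Ch D(E,F)$ by the LS trick: at $x\in\partial E$ with $d_F(x)=\rho$, the distance bound and the density estimate show that $E\triangle F$ contains a fraction of $B_{\rho/2}(x)$ on which $|d_F|\gtrsim\rho$. Integrating via a covering gives
\[
\int_{\partial E}d_F^2\lesssim\int_{E\triangle F}|d_F|.
\]
\textbf{This is the main obstacle}. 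The difficulty is balancing the local volume of $E\triangle F$ near $x$, which is at least $c\rho^{n+1}$, against $\rho^2\,\mathcal H^n(\partial E\cap B_\rho)\le C\rho^{n+2}$, so that the two exponents match. Summing the resulting inequality then gives the third bound in \eqref{sums}.

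\emph{Sum of volume changes.} For \eqref{volsum}, apply the volume estimate with $\ell=\gamma\sqrt h$ for $k\ge2$ and use \eqref{sums}. The perimeter part contributes $\sum\ell P\le (T/h)\sqrt h M$, which is not uniformly bounded, so the naive choice fails. Instead choose $\ell=\gamma\sqrt h$ and estimate
\[
\sum_k\Bigl(\sqrt h P+\frac{D}{\sqrt h}\Bigr)\le \frac{T}{\sqrt h}M+\sqrt h M .
\]
This is still $T/\sqrt h$, so \eqref{volsum} as literally stated must rely on a more refined argument from \cite{LS,MSS}, and I would consult those references for it. The first step is excluded since $E_0$ is not regular.

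\emph{Comparison principle.} Finally, the comparison principle follows from the submodularity $P(E\cup G)+P(E\cap G)\le P(E)+P(G)$ together with the monotonicity of $d_F$ in $F$. Applying it with $G=B_R$, which is a supersolution of the scheme (its minimizer shrinks), gives $E^h_k\subseteq B_R$ for every $k\ge1$.
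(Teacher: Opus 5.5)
The paper does not prove this lemma; it cites Luckhaus--Sturzenhecker and Mugnai--Seis--Spadaro, and your outline follows that standard route. The energy inequality, the distance bound, the density and area bounds via $(C/\sqrt h,\gamma\sqrt h)$-minimality, and the curvature bound are acceptable as sketches. In the curvature bound the exponents do match ($c\rho^{n+1}\cdot\rho$ against $C\rho^{2}\cdot\rho^{n}$), so it is not really the obstacle you flag.

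\textbf{The genuine gap is \eqref{volsum}.} You leave it unproved and suggest it needs an argument beyond the listed estimates. It does not. The volume estimate holds for every $0<\ell\le\gamma\sqrt h$, and you only tried $\ell=\gamma\sqrt h$, which is the worst scale for summing over about $T/h$ steps. Take $\ell=h$ instead. This is admissible once $h\le\gamma^2$, so for all $h\le 1$ since $\gamma>1$. Apply the estimate to the pairs $(E^h_k,E^h_{k-1})$ with $k\ge2$ and use \eqref{sums}:
\[
\sum_{k=2}^K|E^h_k\triangle E^h_{k-1}|\le C\sum_{k=2}^K\Bigl(hP(E^h_k)+\frac1hD(E^h_k,E^h_{k-1})\Bigr)\le C(KhM+M)\le C(T+1)M\,.
\]
This is exactly the stated bound, and the factor $T+1$ comes directly from this choice. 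The same freedom, with $\ell=h/\sqrt{t-s+h}$, is what produces \eqref{discrete-time-holder}.

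\textbf{A smaller point concerns the volume estimate itself.} Covering $\{|d_{E^h_k}|<\ell\}$ by balls centred on $\partial E^h_k$ and using \eqref{area} for $E^h_k$ gives $C\ell P(E^h_k)+\frac1\ell D$. That is weaker than the stated $C\ell P(E^h_{k+1})$, since $P(E^h_{k+1})\le P(E^h_k)$. Balls centred on $\partial E^h_{k+1}$ need not cover that region at all; think of $E^h_{k+1}=\emptyset$.

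To reach the estimate as stated you need a dichotomy. Take a maximal disjoint family $B_\ell(x_i)$ with $x_i\in\partial E^h_k$, so that the balls $B_{3\ell}(x_i)$ cover $(E^h_{k+1}\triangle E^h_k)\cap\{|d_{E^h_k}|<\ell\}$.
\begin{itemize}
\item If $\partial E^h_{k+1}$ meets $B_{3\ell}(x_i)$, the lower bound in \eqref{area} for $E^h_{k+1}$ places perimeter at least $c\ell^n$ of $E^h_{k+1}$ inside $B_{4\ell}(x_i)$. These enlarged balls overlap boundedly, so this case costs $C\ell P(E^h_{k+1})$.
\item Otherwise $B_{3\ell}(x_i)$ lies a.e.\ inside or a.e.\ outside $E^h_{k+1}$. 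Then $E^h_{k+1}\triangle E^h_k$ contains $B_\ell(x_i)\setminus E^h_k$ or $B_\ell(x_i)\cap E^h_k$. By \eqref{density}--\eqref{area} for $E^h_k$, the layer $\{|d_{E^h_k}|<\varepsilon\ell\}$ is only a small fraction of the ball. Hence each such ball contributes at least $c\ell^{n+2}$ to $D(E^h_{k+1},E^h_k)$, so this case costs $\frac{C}{\ell}D(E^h_{k+1},E^h_k)$.
\end{itemize}
This is where density estimates for both sets, and hence $k\ge1$, are used. For \eqref{volsum} alone your weaker version would already suffice, because $P(E^h_k)\le M$.
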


\begin{remark}\label{rmk:c0}
    Let us show that, under the assumption $|\partial E_0|=0$, it holds 
    \[
    |E^h_1\triangle E_0|\to 0,\quad \text{as }h\to0\,.
    \]
    Indeed, consider $r>0$. We get  
    \[
    \begin{split}
        |E^h_1\triangle E_0| &= |(E^h_1\triangle E_0)\cap \{|d_{E_0}|\le r\}| + |(E^h_1\triangle E_0)\cap \{|d_{E_0}|> r\}| \\
        &\le |(E^h_1\triangle E_0)\cap\{|d_{E_0}|\le r\}| + \frac 1r \int_{(E^h_1\triangle E_0)\cap\{|d_{E_0}|>r\} } |d_{E_0}|\\
        &\le  |(E^h_1\triangle E_0)\cap\{|d_{E_0}|\le r\}| +\frac hr P(E_0)\,,
    \end{split}
    \]
    where in the last line we used the minimality estimate $D(E^h_1,E_0)\le hP(E_0)$. Sending first $h\to 0$ and then using the fact that $|\{|d_{E_0}|\le r\}|\to 0$ as $r\to 0$ since $|\partial E_0|=0$, we conclude.
\end{remark}

We also record the expression for the Euler-Lagrange equation and the stability inequality holding along the flow. We refer to \cite{LS, Jul24}.

\begin{lemma}\label{lem:reg}
If $1\leq n\leq6$, every nonempty iterate $E^h_k$ for $k \geq 1$ has compact
$C^{2,\alpha}$ boundary for every $0<\alpha<1$. Moreover, for every $k \geq 1$
\begin{equation}\label{EL}
 H_{E^h_{k}}=-\frac{d_{E^h_{k-1}}}{h}
 \qquad\text{on $\partial E^h_{k}$ }\,.
\end{equation}
For every $p\in\partial E^h_{k}$, every $0<r\leq\gamma\sqrt h$, and every
finite-perimeter set $G$ satisfying
$G\triangle E^h_{k}\Subset B_r(p)$, one has
\begin{equation}\label{quasi}
 P(E^h_{k};B_r(p))
 \leq P(G;B_r(p))
 +\frac C{\sqrt h}|E^h_{k} \triangle G|\,.
\end{equation}
Finally, denoting $A_E$ the second fundamental form of a (regular) set $E$,  for every $\zeta\in C^1(\partial E^h_k)$ it holds 
\begin{equation}\label{stab}
 \int_{\partial E^h_{k}}
 \left(
 |\nabla_\tau\zeta|^2-|A_{E^h_{k}}|^2\zeta^2
 \right)\,d\mathcal H^n
 \geq
 -\frac1h\int_{\partial E^h_{k}}\zeta^2\,d\mathcal H^n\,.
\end{equation}
\end{lemma}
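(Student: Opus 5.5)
The statement collects three facts: $C^{2,\alpha}$ regularity together with the Euler--Lagrange equation \eqref{EL}, the quasi-minimality \eqref{quasi}, and the stability inequality \eqref{stab}. I would prove them in the order quasi-minimality, regularity, Euler--Lagrange, stability, since each step uses the previous one.

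\textbf{Quasi-minimality.} Fix $k\ge1$ and set $F=E^h_{k-1}$. Let $G$ satisfy $G\triangle E^h_k\Subset B_r(p)$ with $p\in\partial E^h_k$ and $r\le\gamma\sqrt h$. Comparing $\mathcal J_F(E^h_k)\le\mathcal J_F(G)$ gives
\[
P(E^h_k;B_r(p))\le P(G;B_r(p))+\frac1h\int_{G\triangle E^h_k}|d_F|\,dx .
\]
By Lemma~\ref{lem:prel-est}, $|d_F|\le C\sqrt h$ on $\partial E^h_k$. The function $d_F$ is $1$-Lipschitz and $r\le\gamma\sqrt h$, so $|d_F|\le (C+\gamma)\sqrt h$ on $B_r(p)$. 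This yields \eqref{quasi}.

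\textbf{Regularity and Euler--Lagrange.} By \eqref{quasi}, $E^h_k$ is a $(\Lambda,R_0)$-minimizer of perimeter with $\Lambda=C/\sqrt h$ and $R_0=\gamma\sqrt h$. The classical regularity theory (see \cite{Maggi}) applies because $n\le6$, i.e.\ ambient dimension $n+1\le7$. The singular set is empty, so $\partial^*E^h_k$ is $C^{1,\beta}$. The density estimate \eqref{density} shows that $\partial E^h_k=\overline{\partial^*E^h_k}$ for the open representative. Compactness follows from boundedness, which the comparison principle provides.

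Next take the first variation of $\mathcal J_F$ along $\Phi_t(x)=x+tX$:
\[
0=\frac{d}{dt}\Big|_{t=0}\mathcal J_F(\Phi_t(E^h_k))=\int_{\partial E^h_k}\Big(H\,X\cdot\nu+\frac{d_F}{h}X\cdot\nu\Big)\,d\mathcal H^n .
\]
This gives $H_{E^h_k}=-d_F/h$ weakly. The datum $d_F$ is only Lipschitz in general, and $C^{1,1}$ near $\partial F$ within the reach of $F$. Schauder theory for the mean curvature equation in graph coordinates upgrades $C^{1,\beta}$ first to $C^{1,\alpha'}$ with $\alpha'<1$ for Lipschitz data. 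To reach $C^{2,\alpha}$ one needs $d_F\in C^{0,\alpha}$, which holds since $d_F$ is Lipschitz: $H\in C^{0,1}\subset C^{0,\alpha}$, and elliptic regularity gives $C^{2,\alpha}$ for every $\alpha<1$. The equation then holds pointwise.

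\textbf{Stability.} This is the main obstacle: the second variation of the dissipation term involves the derivative of $d_F$, which is merely Lipschitz. Take a normal variation $\Phi_t(x)=x+t\zeta\tilde\nu$, with $\tilde\nu$ a $C^1$ extension of $\nu_{E^h_k}$. The second variation of perimeter is
\[
\int_{\partial E^h_k}\big(|\nabla_\tau\zeta|^2-|A|^2\zeta^2+H^2\zeta^2\big)\,d\mathcal H^n
\]
plus a term $H\cdot(\text{acceleration})$. The second variation of the volume term is
\[
\frac1h\int_{\partial E^h_k}\big(\partial_\nu d_F+H d_F\big)\zeta^2\,d\mathcal H^n
\]
plus $\frac{d_F}{h}\cdot(\text{acceleration})$. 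The acceleration terms cancel by \eqref{EL}, and so do the $H^2$ and $Hd_F/h$ terms, since $H=-d_F/h$ gives $H^2+Hd_F/h=0$. What remains is $\frac1h\int\partial_\nu d_F\,\zeta^2$, and $|\nabla d_F|\le1$ a.e.

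To handle the low regularity of $d_F$ rigorously, I would not differentiate twice. Instead I would use the one-sided inequality $\mathcal J_F(\Phi_t(E))-\mathcal J_F(E)\ge0$, divide by $t^2$, and bound the volume part from above by the Lipschitz estimate
\[
\int_{\Phi_t(E)\triangle E}|d_F-d_F(\pi(x))|\,dx\le\frac{t^2}{2}\int_{\partial E^h_k}\zeta^2\,d\mathcal H^n+o(t^2),
\]
where $\pi$ is the nearest-point projection onto $\partial E^h_k$. Letting $t\to0$ then yields \eqref{stab} with constant $1/h$.
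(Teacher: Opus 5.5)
Your proposal is correct. The paper gives no proof of this lemma and simply defers to \cite{LS,Jul24}; your argument is the standard one behind those references. It obtains quasi-minimality from the $L^\infty$ bound on $d_{E^h_{k-1}}$ along $\partial E^h_k$, then applies regularity of $(\Lambda,r_0)$-minimizers in ambient dimension at most seven followed by a Schauder bootstrap with Lipschitz right-hand side. Stability comes from a one-sided second-order expansion in normal coordinates. There \eqref{EL} removes both the first-order term and $H(H+d_{E^h_{k-1}}/h)\zeta^2$, while the bound $|d_{E^h_{k-1}}(p+s\nu_{E^h_k}(p))-d_{E^h_{k-1}}(p)|\le|s|$ produces exactly the $\frac1h\int\zeta^2$ term.

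One detail should be added: the regularity theory needs almost-minimality on all small balls, not only those centered on $\partial E^h_k$. This follows from the same Lipschitz bound for balls meeting $\partial E^h_k$, and is trivial for the others, as the paper remarks after \eqref{eq:almost-min}.
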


The following property of discrete flows holds, and it implies H\"older continuity in time of their flat flow limits. 

\begin{lemma}
For every $T>0$ and every $0\leq s<t\leq T$,
\begin{equation}\label{discrete-time-holder}
 |E_t^h\triangle E_s^h|
 \leq CM\sqrt{|t-s|+h}+|E^h_h\triangle E_0|\,.
\end{equation}
The constant $C$ is independent of $h$, $s$ and $t$.
\end{lemma}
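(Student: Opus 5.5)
The plan is the classical Luckhaus--Sturzenhecker / Mugnai--Seis--Spadaro argument: reduce to the case where both times are grid points, split off the first step, and estimate the remaining increments by the volume estimate of Lemma~\ref{lem:prel-est} with a free length parameter $\ell$.

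\textbf{Reduction to grid times.} Since $E^h_t=E^h_{[t/h]}$, set $j=[s/h]$ and $m=[t/h]$. We may assume $m>j$, since otherwise the left-hand side vanishes. Note that $(m-j)h\le t-s+h$.

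\textbf{Splitting off the first step.} If $j=0$, we separate the first step $|E^h_1\triangle E_0|$, which is exactly the additive term $|E^h_h\triangle E_0|$ in \eqref{discrete-time-holder}. By the triangle inequality it then suffices to bound
\[
|E^h_m\triangle E^h_{j'}|\le \sum_{k=j'+1}^{m}|E^h_k\triangle E^h_{k-1}|, \qquad j'=\max\{j,1\}.
\]
For each such $k\ge 2$, apply the volume estimate in Lemma~\ref{lem:prel-est} with a common $\ell\le\gamma\sqrt h$, and use $P(E^h_k)\le M$ together with \eqref{sums}:
\[
\sum_{k}|E^h_k\triangle E^h_{k-1}|\le C\Big(\ell (m-j)M+\frac{1}{\ell}\sum_k D(E^h_k,E^h_{k-1})\Big)\le C\Big(\ell(m-j)M+\frac{hM}{\ell}\Big).
\]

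\textbf{Choice of $\ell$ and the main obstacle.} The main obstacle is that the optimal choice $\ell=\sqrt{h/(m-j)}$ is admissible, since it is $\le\sqrt h$, only when $m-j\ge \gamma^{-2}$. With this choice the bound becomes
\[
C M\sqrt{(m-j)h}\le CM\sqrt{t-s+h}.
\]
When $m-j$ is small, namely bounded by a dimensional constant, we take $\ell=\gamma\sqrt h$ instead and get $C(m-j)M\sqrt h\le CM\sqrt h\le CM\sqrt{t-s+h}$. Here we use that the number of terms is bounded; alternatively, Lemma~\ref{lem:prel-est} allows $\ell$ up to $\gamma\sqrt h$ with $\gamma$ adjustable, so both regimes are covered.

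A genuine subtlety is that the volume estimate requires $D(E^h_{k},E^h_{k-1})$ with $E^h_{k-1}$ regular. This is why the first step is excluded: the estimate is applied only to $k\ge 2$, and the contribution $|E^h_1\triangle E_0|$ is kept separately. Once an iterate is empty, all later increments vanish, so emptiness poses no issue.
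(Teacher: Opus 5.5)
Your argument is correct. The paper states this lemma without proof and implicitly relies on the standard Luckhaus--Sturzenhecker/Mugnai--Seis--Spadaro argument, which is exactly what you give: telescope, split off $|E^h_1\triangle E_0|$ as in \eqref{volsum}, apply the volume estimate for $k\ge 2$, bound the dissipations by $hM$, and choose $\ell=\sqrt{h/N}$ with $N\le m-j$ and $Nh\le t-s+h$.

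Your ``main obstacle'' is in fact vacuous. Lemma~\ref{lem:prel-est} gives $\gamma>1$, so $\sqrt{h/N}\le\sqrt h<\gamma\sqrt h$ for every $N\ge 1$, and no case distinction is needed.
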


The preceding estimates give the usual compactness of the scheme.
Indeed, the sets $E^h_t$ are all contained in a common bounded set in $\R^{n+1}$, the perimeters are 
uniformly bounded, and \eqref{discrete-time-holder} gives   the uniform temporal estimate
 in $L^1$. Consequently, for every sequence $h_j\to 0$
and every corresponding choice of discrete minimizers, there exists a
subsequence and a family of finite-perimeter sets $\{E_t\}_{t\geq0}$
such that
\begin{equation}\label{flat-convergence}
\sup_{0\le t\le T} \|\chi_{E_t^{h_j}}- \chi_{E_t}\|_{L^1}\to 0, \quad \text{for every }T<+\infty. 
\end{equation}
Moreover,  for every $0\le s\le  t$ it holds  
\begin{equation} \label{eq:12Holder}
 P(E_t)\leq M,\qquad
 |E_t\triangle E_s|\leq CM\sqrt{|t-s|}\,.
\end{equation}

\begin{definition}[Flat flow]\label{def:flat-flow}
Any family $\{E_t\}_{t\geq0}$ obtained through
\eqref{flat-convergence} along a sequence $h_j\to0$
is called a flat flow starting from $E_0$.
\end{definition}

Lastly, we recall the notion of Brakke flow \cite{Bra}, referring to \cite{Ton} for additional material.

\begin{definition}[Brakke flow]\label{def:Brakke}
    A (measurable) family of $n$-dimensional varifolds $\{V_t\}_{t\ge 0}$ is an $n$-dimensional Brakke flow if:
    \begin{itemize}
        \item[i)] For a.e. $t\ge 0$, $V_t$ is  integral, with   bounded first variation  absolutely continuous with respect to $\|V_t\|$ and generalized mean curvature vector $\mathbf{H}_t$;
        \item[ii)] for all $T> 0$ and compact $K\Subset \R^{n+1}$ it holds 
        \[
            \sup_{t\in [0,T]}\|V_t\|(K) <+\infty,\qquad \int_0^T\int |\mathbf H|^2 \,d\|V_t\|\,dt<+\infty\,; 
        \]
        \item[iii)] for all $0\le s<t<+\infty$ and all test function $\phi \in C^1_c(\R^{n+1}\times[0,+\infty);[0,+\infty))$ it holds 
        \begin{equation}\label{eq:def-Brakke} 
            \int \phi(\cdot,t)  d\|V_t \| - \int \phi(\cdot,s)  d\|V_s \| 
            \le \int_s^t
            \int\left( \partial_\tau\phi+\nabla\phi\cdot \mathbf{H}_\tau- \phi |\mathbf H_\tau|^2 \right) d\|V_\tau\|\, d\tau \,.
        \end{equation}
    \end{itemize}
\end{definition}

\subsection{Curvature estimates via stability}
We recall the following definition of $(\Lambda,R)$-minimality of perimeter. We say that a set $F \subset \R^{n+1}$ of locally finite perimeter in an open set $U$ is a $(\Lambda,R)$-minimizer of the perimeter in $U$ if, for every ball $B_r(x) \Subset U$ with $0 < r < R$ and every set $G$ of locally finite perimeter such that
\[
F \triangle G \Subset B_r(x)
\]
one has
\begin{equation} \label{eq:almost-min}
   P(F;B_r(x)) \leq P(G;B_r(x)) + \Lambda |F \triangle G| \,.
\end{equation}
Notice that \eqref{quasi} implies that for every $k \geq 1$ any minimizer $E^h_k$ is a $(C\,h^{-1/2},\gamma h^{1/2})$-minimizer of the perimeter, modulo possibly reducing $\gamma$ so that \eqref{quasi} holds also for balls whose centers do not lie on $\partial E^h_k$. 

We recall the following standard compactness consequence of perimeter almost-minimality \cite{Maggi}: this will be applied to suitable rescalings of the sets $E^h_k$.

\begin{proposition} \label{prop:AM}
    Let $F_j$ be $(\Lambda_j, R_j)$-minimizers in expanding domains $U_j \uparrow \R^{n+1}$ with
    \begin{equation} \label{hp:AM}
        \Lambda_j \to 0\,, \qquad R_j \to \infty \,,
    \end{equation}
    and assume uniform local perimeter bounds and $0 \in \partial^*F_j$. Then, up to a subsequence
    \begin{equation} \label{th:AM}
        \chi_{F_j} \to \chi_{F_\infty} \quad \mbox{in $L^1_{loc}$}\,,
    \end{equation}
    where $F_\infty$ is locally perimeter minimizing in $\R^{n+1}$. Furthermore, the perimeter measures converge locally, and, upon choosing representatives of $F_j$ and $F_\infty$ so that $\partial F_j=\spt|D\chi_{F_j}|$ and $\partial F_\infty=\spt|D\chi_{F_\infty}|$, the boundaries $\partial F_j$ converge locally in Hausdorff distance to $\partial F_\infty$, and $0 \in \partial F_\infty$. Finally, if $n \leq 6$, the boundaries $\partial F_j$ converge locally as $C^{1,\alpha}$ graphs to $\partial F_\infty$ near every point of $\partial F_\infty$. 
\end{proposition}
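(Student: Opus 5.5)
The plan follows the standard De Giorgi--Federer compactness route for almost-minimizers, as in \cite{Maggi}. \textbf{Step 1 (compactness in $L^1_{loc}$).} Fix a ball $B_R$. For $j$ large we have $B_{R+1}\subset U_j$, and by assumption $\sup_j P(F_j;B_{R+1})<\infty$. By BV compactness and a diagonal argument over $R\in\N$ we extract a subsequence with $\chi_{F_j}\to\chi_{F_\infty}$ in $L^1_{loc}$, which gives \eqref{th:AM}.

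\textbf{Step 2 (minimality of the limit and convergence of perimeters).} Fix $B_r(x)$ and a competitor $G$ with $G\triangle F_\infty\Subset B_r(x)$. Choose $s<r$ with $G\triangle F_\infty\Subset B_s(x)$ such that $\mathcal H^n(\partial B_s\cap(\partial^*F_j\cup\partial^*F_\infty))=0$ for all $j$. By the slicing lemma we may also assume $\int_{\partial B_s}|\chi_{F_j}-\chi_{F_\infty}|\,d\mathcal H^n\to0$, since the integral over $s$ tends to zero. Define the glued competitors $G_j:=(G\cap B_s)\cup(F_j\setminus B_s)$. Then
\[
P(F_j;B_{s})\le P(G;B_s)+\mathcal H^n(\partial B_s\cap(F_j\triangle F_\infty))+\Lambda_j|B_s|.
\]
This is admissible once $s<R_j$ and $B_s\Subset U_j$, which holds for $j$ large by \eqref{hp:AM}. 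Lower semicontinuity of perimeter, together with $\Lambda_j\to0$, yields $P(F_\infty;B_s)\le P(G;B_s)$, so $F_\infty$ is locally perimeter minimizing. Taking $G=F_\infty$ in the same estimate gives $\limsup_j P(F_j;B_s)\le P(F_\infty;B_s)$. Combined with lower semicontinuity, this shows $|D\chi_{F_j}|\rightharpoonup|D\chi_{F_\infty}|$ locally.

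\textbf{Step 3 (Hausdorff convergence and $0\in\partial F_\infty$).} Uniform density estimates hold for $(\Lambda_j,R_j)$-minimizers at scales $r\le\min\{R_j,\,c/\Lambda_j\}$, which become arbitrarily large. These estimates are $|F_j\cap B_r(y)|\ge cr^{n+1}$, $|B_r(y)\setminus F_j|\ge cr^{n+1}$ and $P(F_j;B_r(y))\ge cr^n$ for $y\in\partial F_j$. They are derived via the usual isoperimetric ODE argument.

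Suppose first that $y_j\in\partial F_j$ and $y_j\to y$. By $L^1$ convergence the density estimates pass to the limit, so $y\in\spt|D\chi_{F_\infty}|=\partial F_\infty$. Applying this to $y_j=0$ gives $0\in\partial F_\infty$.

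Conversely, suppose $y\in\partial F_\infty$ but some ball $B_\rho(y)$ misses $\partial F_j$ along a subsequence. Then $B_\rho(y)$ lies a.e. inside $F_j$ or inside its complement. Passing to the limit, $B_\rho(y)$ lies a.e. in $F_\infty$ or in its complement, contradicting the density estimate for $F_\infty$ at $y$. These two facts together give local Hausdorff convergence.

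\textbf{Step 4 ($C^{1,\alpha}$ convergence for $n\le6$).} Since $n\le 6$, the limit $\partial F_\infty$ is a smooth minimal hypersurface (Simons, via the regularity of minimizing cones). Hence at each $y\in\partial F_\infty$ the excess $\mathbf e(F_\infty,y,r)$ is small for $r$ small. The perimeter measures converge by Step 2 and the boundaries converge in Hausdorff distance by Step 3, so the excess is continuous under this convergence. Therefore the excess of $F_j$ at nearby boundary points is below the $\varepsilon$-regularity threshold, with scale-invariant $\Lambda_j r\to0$.

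De Giorgi's $\varepsilon$-regularity for almost-minimizers then represents $\partial F_j$ in a cylinder around $y$ as $C^{1,\alpha}$ graphs with uniform estimates. Arzel\`a--Ascoli, together with uniqueness of the limit, gives $C^{1,\alpha'}$ convergence, and with uniform bounds we obtain convergence in every $C^{1,\alpha}$.

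\textbf{Main obstacle.} The delicate point is the gluing in Step 2: one must choose good radii so that the boundary term $\mathcal H^n(\partial B_s\cap(F_j\triangle F_\infty))$ vanishes, and keep this choice consistent with the admissibility constraint $s<R_j$. Step 4 is essentially a citation of the $\varepsilon$-regularity theorem, but it relies crucially on Step 2's convergence of perimeters, because that is what controls the excess.
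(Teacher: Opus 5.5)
Your proposal is correct and takes the same route the paper relies on: the paper gives no proof and simply cites the standard almost-minimizer theory in Maggi's book. That theory consists of compactness and minimality of the limit via cut-and-paste on good spheres, uniform density estimates at scales $\min\{R_j,1/\Lambda_j\}\to\infty$ yielding Hausdorff convergence and $0\in\partial F_\infty$, and the $\varepsilon$-regularity/$C^{1,\gamma}$-convergence theorem for almost-minimizers, which is exactly what you reproduce. The only loose end is your closing claim of convergence in \emph{every} $C^{1,\alpha}$, which does not follow from uniform $C^{1,\gamma}$ bounds alone; since the statement only asks for some $\alpha$, nothing is lost.
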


By a rescaling procedure, we then show  curvature  estimates for the evolving sets $E^h_k$.

\begin{proposition}[Curvature estimate]\label{prop:curvature}
For $1\le n\le6$, every  iterate $E^h_k$ for $k\ge 1$ of \eqref{scheme} satisfies
\begin{equation*}
 \|A_{E^h_k}\|_{L^\infty(\partial E^h_k)}\le C_n/ \sqrt h\,.
\end{equation*}

\end{proposition}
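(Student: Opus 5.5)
We argue by contradiction via a point-picking and blow-up argument at scale $\sqrt h$. Suppose there are $h_j>0$, iterates $F_j:=E^{h_j}_{k_j}$ and points $p_j\in\partial F_j$ with $\sqrt{h_j}\,|A_{F_j}|(p_j)\to\infty$. Since each $\partial F_j$ is compact and $C^{2,\alpha}$ by Lemma~\ref{lem:reg}, we may take $p_j$ to be a point where $|A_{F_j}|$ attains its maximum, and set $\lambda_j:=|A_{F_j}|(p_j)$, so that $\lambda_j\sqrt{h_j}\to\infty$. We rescale with $\tilde F_j:=\lambda_j(F_j-p_j)$. Then $0\in\partial\tilde F_j$, $|A_{\tilde F_j}|(0)=1$ and $|A_{\tilde F_j}|\le 1$ everywhere, so the usual Schoen--Simon--Yau point-picking is not even needed: global maximality already gives the normalization.

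Next we record the properties of the rescaled sets. By \eqref{quasi}, $F_j$ is a $(Ch_j^{-1/2},\gamma h_j^{1/2})$-minimizer. After scaling, $\tilde F_j$ is a $(\Lambda_j,R_j)$-minimizer with $\Lambda_j=C/(\lambda_j\sqrt{h_j})\to0$ and $R_j=\gamma\lambda_j\sqrt{h_j}\to\infty$. The area bounds \eqref{area} rescale to $cr^n\le P(\tilde F_j;B_r(x))\le Cr^n$ for $x\in\partial\tilde F_j$ and $r\le R_j$, which gives uniform local perimeter bounds and Euclidean area growth. Hence Proposition~\ref{prop:AM} applies: $\tilde F_j\to F_\infty$ in $L^1_{loc}$, $F_\infty$ is locally perimeter minimizing, and since $n\le6$ the boundary $\partial F_\infty$ is a smooth embedded minimal hypersurface containing $0$. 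Moreover $\partial\tilde F_j\to\partial F_\infty$ locally as $C^{1,\alpha}$ graphs. In fact we will not need the stable Bernstein theorem of \cite{Bel} in its full generality, since minimizing cones are flat for $n\le 6$; the stability inequality \eqref{stab} rescales to $\int(|\nabla\zeta|^2-|A|^2\zeta^2)\ge-(\lambda_j^2h_j)^{-1}\int\zeta^2$, which tends to stability. So either route yields that $\partial F_\infty$ is a hyperplane: the Bernstein theorem for minimizing boundaries when $n\le6$ (Simons), or stable Bernstein with Euclidean area growth.

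The main obstacle is passing the curvature normalization $|A_{\tilde F_j}|(0)=1$ to the limit, because $C^{1,\alpha}$ convergence alone does not control second derivatives. We upgrade the convergence to $C^2$ near $0$ as follows. Locally, $\partial\tilde F_j$ is a graph $u_j$ over $T_0\partial F_\infty$ with $\|u_j\|_{C^{1,\alpha}}$ bounded and $|D^2u_j|\lesssim|A|\le1$. By \eqref{EL}, the mean curvature satisfies $\tilde H_j=-\lambda_j^{-1}h_j^{-1}d_{E^{h_j}_{k_j-1}}(p_j+\cdot/\lambda_j)$. The signed distance is $1$-Lipschitz, so $\|\nabla\tilde H_j\|\le(\lambda_j^2h_j)^{-1}\to0$. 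Also $|\tilde H_j|\le\sqrt n\,|A|\le\sqrt n$ is bounded, so $\tilde H_j$ is uniformly $C^{0,1}$ in the rescaled coordinates. The graph functions therefore solve the mean curvature equation $\mathrm{div}(\nabla u_j/\sqrt{1+|\nabla u_j|^2})=\tilde H_j$ with uniformly Lipschitz right-hand side and $C^{1,\alpha}$-bounded coefficients. Schauder estimates then give uniform $C^{2,\alpha}$ bounds, so $u_j\to u_\infty$ in $C^2$ by Arzelà--Ascoli. Consequently $|A_{F_\infty}|(0)=\lim|A_{\tilde F_j}|(0)=1$, which contradicts flatness of $\partial F_\infty$. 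This gives $\|A_{E^h_k}\|_{L^\infty}\le C_n/\sqrt h$ with $C_n$ depending only on dimension, because all constants in Lemmas~\ref{lem:prel-est} and \ref{lem:reg} are dimensional.
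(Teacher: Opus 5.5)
Your proof is correct and structurally the same as the paper's. The shared steps are:
\begin{itemize}
\item the blow-up $\lambda_j(E^{h_j}_{k_j}-p_j)$ at a point of maximal $|A|$;
\item the rescaled almost-minimality with $\Lambda_j=C(\lambda_j\sqrt{h_j})^{-1}\to0$ and $R_j=\gamma\lambda_j\sqrt{h_j}\to\infty$, together with the rescaled area bounds, feeding Proposition~\ref{prop:AM};
\item the upgrade from $C^{1,\alpha}$ to $C^{2}$ convergence by Schauder estimates for the graphical mean curvature equation, using that the rescaled right-hand side from \eqref{EL} has Lipschitz constant at most $(h_j\lambda_j^2)^{-1}$.
\end{itemize}

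The one genuine difference is the Liouville theorem that closes the argument. The paper uses only that the component of $\partial E_\infty$ through $0$ is a complete, two-sided, stable minimal hypersurface with Euclidean area growth. It then invokes the stable Bernstein theorem (Schoen--Simon--Yau, extended to $n=6$ in \cite{Bel}) for $2\le n\le 6$, and handles $n=1$ separately via \eqref{EL} and $\sup_{\partial E^h_k}|d_{E^h_{k-1}}|\le C\sqrt h$. You instead use the fact, already supplied by Proposition~\ref{prop:AM}, that the limit is a perimeter minimizer in all of $\R^{n+1}$. The classical Bernstein theorem for minimizing boundaries (Simons' flatness of minimizing cones for $n\le 6$, plus monotonicity) then gives a half-space, uniformly for $1\le n\le 6$. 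Your route is more elementary and avoids a very recent deep result. The paper's route uses less of the structure of the limit and would still work if the blow-up were only known to be stable.

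A cosmetic point: the coefficients $a^{ik}(\nabla u_j)$ in the nondivergence form of the equation are uniformly Lipschitz, because $|D^2u_j|\lesssim|A_{\widetilde F_j}|\le1$. That is what the Schauder estimate needs; calling them ``$C^{1,\alpha}$-bounded'' is imprecise but harmless.
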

\begin{proof}
We drop the superscript $h$ for simplicity. Suppose the estimate fails. Since the boundaries are (uniformly) compact and $C^2$, choose
\[
 \lambda_j=\max_{\partial E_j}|A_{E_j}|=|A_{E_j}|(p_j)\,,
\qquad  \lambda_j\sqrt{h_j}\longrightarrow\infty\,.
\]
Define $\widetilde E_j=\lambda_j(E_j-p_j)$. Then
\begin{equation}\label{normalize}
 |A_{\widetilde E_j}|\le1,\qquad |A_{\widetilde E_j}|(0)=1\,.
\end{equation}
The rescaled sets $\widetilde E_j$ are $\left(C (\lambda_j \sqrt{h_j})^{-1}, \gamma\lambda_j\sqrt{h_j} \right)$-minimizers of the perimeter in $\R^{n+1}$, and $0 \in \partial \widetilde E_j$. Moreover, rescaling \eqref{area} yields that for every fixed $R$
\[
P(\widetilde E_j;B_R)\le CR^n \qquad \mbox{for all sufficiently large $j$}\,.
\]
We can then apply Proposition \ref{prop:AM}, and conclude the existence of a locally perimeter minimizing set $E_\infty$ with $0 \in \partial E_\infty$ and Euclidean perimeter  growth $P(E_\infty;B_R)\le C R^n$. Its boundary is smooth, with local  $C^{1,\alpha}$ graphical convergence of $\partial \widetilde E_j$ to $\partial E_\infty$.

Next,  rescale \eqref{EL} to see that the  sets $\widetilde E_j$ satisfy 
\[
 H_{\widetilde E_j}(y)=-\frac{d_{F_j}(p_j+y/\lambda_j)}{h_j\lambda_j},
 \quad
 |H_{\widetilde E_j}(0)|\le\frac{C}{\lambda_j\sqrt{h_j}},\quad
 \Lip(H_{\widetilde E_j})\le\frac1{h_j\lambda_j^2}\,,
\]
where $F_j$ is the predecessor of $E_j$, and where we have also used Lemma \ref{lem:prel-est} and the fact that $d_F$ is $1$-Lipschitz. Thus $H_{\widetilde E_j}\to0$ locally uniformly, with vanishing Lipschitz seminorm. The $C^{1,\alpha}$ convergence of the boundaries implies local graphicality over a common plane $\pi_0$, and assume without loss of generality that $\pi_0=\R^n\times\{0\}$. If the coordinates are labeled $y=(y',y_{n+1})$, we can then let $y_{n+1}=v_j(y')$ denote the function locally representing $\partial \widetilde E_j$, and therefore satisfying
\[
-\diver\left(\frac{\nabla v_j}{\sqrt{1+|\nabla v_j|^2}} \right)(y') = H_{\widetilde E_j}(y',v_j(y'))
\]
The $C^{1,\alpha}$ bounds for the graphs and the uniform Lipschitz bounds for $H_{\widetilde E_j}$ give, by Schauder estimates, uniform local $C^{2,\alpha}$ estimates, which in turn imply that $v_j$ converge to a limit $v_\infty$ representing $\partial E_\infty$ in $C^{2,\beta}$ for all $0 < \beta < \alpha$. Hence, we deduce that
\begin{equation}\label{limitnorm}
 H_{ E_\infty}=0\,,\qquad |A_{ E_\infty}|(0)=1\,.
\end{equation}
Since $E_\infty$ is locally perimeter minimizing, $\partial E_\infty$ is stable. The component $M_\infty$ through $0$ of $\partial E_\infty$ is then two-sided and stable; it is also complete and properly embedded, since it is a
connected component of the closed smooth hypersurface
$\partial E_\infty$, and it has Euclidean area growth. The stable Bernstein theorem (see, for instance, \cite{Bel}) implies then that $M_\infty$ is a hyperplane when $2\le n\le6$.  This contradicts \eqref{limitnorm}. The case $n=1$ follows straightforwardly from the Euler-Lagrange equation.
\end{proof}

Let us recall the notion of reach for a regular set. Given a bounded set $E$ of class $C^2$, we define the \emph{reach} of $\partial E$ by
\begin{equation} \label{def:reach}
\begin{split}
\operatorname{reach}(\partial E)
:=
\sup\Big \lbrace & r \ge 0: \mbox{every $y\in\R^{n+1}$ with $\text{dist}(y, \partial E) <r$} \\  & \mbox{has a unique nearest point on $\partial E$}\Big\rbrace\,.
\end{split}
\end{equation}
In particular, $d_E$ is $C^1$ on $\{|d_E|<\text{reach}(\partial E)\}$
and its gradient is determined by the unique nearest-point projection onto $\partial E$.

\begin{lemma}[Reach estimate] \label{lem:reach}
Let $1\le n\le6$. Assume that $\partial E$ is a compact $C^2$ boundary such that there exist a radius $r_0 > 0$ and positive numbers $K,L$ so that for every $x\in\partial E$ the following holds in $B_{r_0}(x)$:
\begin{enumerate}
\item $|A_E|\le K/r_0$;
\item \eqref{density}--\eqref{area} hold for all $r\le r_0$, with fixed constants;
\item  the set $E$ is a $(L/r_0,  r_0)$-minimizer of the perimeter in $B_{r_0}(x)$.
\end{enumerate}
Then, there exists $c>0$ depending only on the constants above  such that
\[
 \reach(\partial E)\ge c r_0\,.
\]

\end{lemma}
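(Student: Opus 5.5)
By scaling, we may assume $r_0=1$; the hypotheses and the conclusion are invariant under $x\mapsto x/r_0$. The reach of a compact $C^2$ hypersurface is the minimum of two quantities. The first is the \emph{local} obstruction, controlled by $1/\max|A_E|$. The second is the \emph{global} one: half the smallest distance between two points $x,y\in\partial E$ such that the segment $[x,y]$ is normal to $\partial E$ at both ends. Equivalently, $\reach(\partial E)$ is at least $\min\{1/\max|A_E|,\tfrac12\delta\}$, where $\delta$ is the infimum of $|x-y|$ over such \emph{doubly normal} pairs. The curvature bound (1) gives $1/\max|A_E|\ge 1/K$. It therefore suffices to show that doubly normal pairs satisfy $|x-y|\ge c$.

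We first use (1) to obtain a uniform graphical scale. There is $\rho=\rho(K)\le 1$ such that, for each $x\in\partial E$, the connected component of $\partial E\cap B_\rho(x)$ containing $x$ is a graph over $T_x\partial E$ with $C^2$ norm controlled by $K$. In particular, it lies in the slab $\{|(z-x)\cdot\nu(x)|\le K|z-x|^2\}$ and has area comparable to $\rho^n$. If $|x-y|<\rho/2$ and $y$ lies on the same sheet as $x$, then the segment $[x,y]$ cannot be normal at both ends unless $x=y$, since on a single graph with small slope a normal segment leaves the sheet. Hence a doubly normal pair at distance $<\rho/2$ has $y$ on a different sheet $\Sigma_y$ of $\partial E\cap B_\rho(x)$. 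Such a sheet is a graph over a hyperplane parallel to $T_x\partial E$, because $\nu(y)=\pm\nu(x)$.

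The main obstacle is excluding two nearby parallel sheets at distance $\varepsilon=|x-y|\ll\rho$. This is where (2) and (3) enter, via a cut-and-paste comparison. There are two cases according to which phase fills the thin region between the sheets; by the density estimate \eqref{density} applied at scale $\varepsilon$, this region is a slab of thickness about $\varepsilon$ and lies entirely in $E$ or entirely in its complement. Suppose it is $E^c$, so $E$ lies on both outer sides. We replace $E$ in a cylinder of radius $s$ (with $\varepsilon\ll s\ll\rho$) around the axis through $x$ by $E\cup(\text{slab})$. This removes the two discs, of total area about $2\omega_n s^n$, and adds a lateral surface of area about $C s^{n-1}\varepsilon$. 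The volume change is about $Cs^n\varepsilon$. Almost-minimality (3), applied with $L$, gives
\[
2\omega_n s^n\le C s^{n-1}\varepsilon+ C L s^n\varepsilon ,
\]
which is impossible once $\varepsilon\le c(K,L)$ and $s$ is fixed of order $\rho$. The case where the slab lies in $E$ is symmetric: we remove the slab from $E$ instead, since minimality is tested against both inner and outer variations. The graphical control needed to make the cylinder comparison rigorous comes from (1): both sheets have slope $O(K s)$ over the common plane, so the slab has thickness between $\varepsilon/2$ and $2\varepsilon$ on the cylinder of radius $s\sim \min\{\rho,1/K\}$. We also need to check that no third sheet intrudes between them; if one does, we replace $y$ by the closest intermediate sheet and rerun the argument. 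The area bound in \eqref{area} ensures there are only boundedly many sheets in $B_\rho(x)$, so this process terminates.

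Combining the two cases, every doubly normal pair satisfies $|x-y|\ge c(K,L)$ together with the constants in \eqref{density}--\eqref{area}. Hence $\reach(\partial E)\ge c$, and undoing the scaling gives $\reach(\partial E)\ge c\, r_0$.
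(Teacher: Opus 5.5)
Your proof is correct in substance. It takes a direct, quantitative route rather than the paper's blow-up argument, although both rest on the same two mechanisms.

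The paper argues by contradiction. Assuming $\reach(\partial E_j)/r_{0,j}\to0$, it takes a point $m_j$ with two nearest points and rescales by the reach. It then applies Proposition~\ref{prop:AM} to get $C^{1,\alpha}$ graphical convergence to a minimizing boundary, all of whose components are hyperplanes because the rescaled curvature bound tends to zero. The remaining cases are handled as follows:
\begin{itemize}
\item Several parallel planes are excluded by the same cylinder cut-and-paste you use, now performed on an exact minimizer, where it is immediate.
\item Two nearest points on the single remaining nearly flat graph are excluded by strict convexity of $|\cdot-\tilde m_j|^2$ along it. This is the counterpart of your observation that the normal line at $x$ meets $\Sigma_x$ only at $x$.
\item Sheets collapsing onto one plane are ruled out by the convergence of perimeters in Proposition~\ref{prop:AM}.
\end{itemize}
You stay at the fixed scale instead. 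You use the uniform graphical radius from (1), the bottleneck characterization of the reach (which the paper also uses implicitly when producing $m_j$), and a cut-and-paste with explicit errors. Your route avoids compactness and regularity theory for almost-minimizers and uses \eqref{density}--\eqref{area} at most to count sheets. It could also give an explicit $c(K,L)$. The paper's route instead pushes all the sheet-by-sheet geometry to the limit, where it becomes trivial.

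Two steps of your sketch need correcting, though neither is fatal.

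First, the slab thickness is wrong as stated. Two sheets that are exactly parallel on the axis with $|A_E|\le K$ can separate like $\varepsilon+O(Ks^2)$ at distance $s$ from the axis; think of two balls of radius $1/K$ at distance $\varepsilon$. So on a cylinder of radius $s\sim\min\{\rho,1/K\}$ the slab does not have thickness between $\varepsilon/2$ and $2\varepsilon$. Only the upper bound $\varepsilon+CKs^2$ matters. The comparison then reads $2\omega_n\le C\varepsilon/s+CKs+CL(\varepsilon+Ks^2)$. This is still contradictory once $s$ is a small multiple of $\min\{1,K^{-1},(LK)^{-1/2}\}$ and $\varepsilon\le c\,s$.

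Second, the fact that the gap is filled by a single phase does not come from \eqref{density}. It follows from connectedness once no point of $\partial E$ lies in the region. To guarantee this, let $w$ be the point of $\partial E\cap(x,y]$ closest to $x$. A sheet entering the region between $\Sigma_x$ and $\Sigma_y$ cannot cross them, by embeddedness. Hence it is a nearly parallel graph over the whole disc and meets the axis. Consequently, the sheet through $w$ and $\Sigma_x$ bound a region of thickness at most $\varepsilon+CKs^2$. No other sheet can enter that region, since it would meet the open segment $(x,w)$. No iteration or sheet counting is then needed.
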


\begin{proof}
Assume the contradiction: there exist $E_j$ satisfying the assumptions with $r_0=r_{0,j}$ and  $\rho_j=\reach(\partial E_j)$ with $\rho_j/r_{0,j}\to0$ as $j\to +\infty$. By (1), the focal radius is at least $r_{0,j}/K$, so that, since $\rho_j = o(r_{0,j})$, for all sufficiently large $j$ $\rho_j$ is smaller than the focal radius. By definition of reach, then, there exists a point $m_j$ with $\text{dist}(m_j,\partial E_j)=\rho_j$ and  with two distinct nearest points $p_j,q_j\in\partial E_j$. Recenter and rescale   defining $\widetilde E_j=(E_j-p_j)/\rho_j$ and $\tilde m_j,\tilde p_j,\tilde q_j$ accordingly. For such sets $\reach(\partial \widetilde E_j)=1$, and the assumptions are satisfied with $r_0$ replaced by $r_{0,j}/\rho_j \to \infty$. In particular, the curvature bound becomes $|A_{\widetilde E_j}|\le K\rho_j/r_{0,j}\to 0$,  and 
\[
 \tilde p_j=0,\qquad |\tilde m_j|=1,\qquad |\tilde q_j-\tilde m_j|=1,\qquad |\tilde q_j|\le2.
\]
By assumption $(3)$, the sets are uniformly $(L\rho_j/r_{0,j},  r_{0,j}/\rho_j)$-minimizers, and thus Proposition~\ref{prop:AM}
produces, up to extraction of a (not relabeled) subsequence,  a complete smooth locally minimizing boundary $\partial E_\infty$ with local  graphical convergence in $C^{1,\alpha}$. The vanishing curvature bound implies that each limiting component is a hyperplane. 

Note that there can be only one hyperplane. Indeed, local perimeter finiteness makes any family of parallel components of $\partial E_\infty$ locally finite. Intersecting planes are also excluded by the regularity of $E_\infty$.   Consider two adjacent planes  bounding a slab occupied by $E_\infty$. Consider a cylinder of radius $R$ and height $L$, with base  parallel to the planes. It can be chosen to intersect only one slab. 
Removing the cylinder from $E_\infty$  decreases the perimeter by  $  cR^n$ 
and adds only the lateral perimeter $cLR^{n-1}$. For $R$ large enough this contradicts  local minimality. Hence the limit is a single plane.

Note that, in particular, the graphical convergence holds in every neighborhood of $0$. Therefore, for $j$ large enough, both nearest points $\tilde p_j, \tilde q_j$ lie  on one graph over this plane in a fixed cylinder. Assuming for simplicity that the limiting plane is $\{x_{n+1}=0\}$, we can parametrize the rescaled sets $\widetilde E_j$ in a fixed neighborhood $B_3\subseteq \{x_{n+1}=0\}$ as graphs $v_j: B_3\to \R$. By the convergence in $C^{1,\alpha}$ we get $\|v_j\|_\infty+\|\nabla v_j\|_\infty \to 0$, and in fact also 
\[
\|\nabla^2 v_j\|_\infty \leq C (1 + \|\nabla v_j\|_\infty^2)^{3/2} \|A_{\widetilde E_j}\|_\infty \to 0\,.
\]
As a consequence, on $B_3$ the squared distance from $\partial \widetilde E_j$ to $\tilde m_j$ has positive definite Hessian: writing $x=(x',x_{n+1})$ it holds 
\[
 D^2\bigl|(x',v_j(x'))-\tilde m_j\bigr|^2
 =2I+2\nabla v_j\otimes \nabla v_j+2(v_j-\tilde m_{j,n+1})\nabla^2v_j>0\,,
\]
for all $j$ sufficiently large, where we used $|\tilde m_j|\le1$.
Therefore, such function  cannot have two distinct interior critical points. Recalling that $\tilde p_j=0$ and $|\tilde q_j|\le 2$,  there are two nearest points to $\tilde m_j$ in $B_3$. Nearest points are critical for the squared distance function, a contradiction.
\end{proof}

As a direct corollary one gets the following curvature and reach estimates holding along all the iterates of the minimizing movements scheme. The proof is immediate from Proposition \ref{prop:curvature} and Lemma \ref{lem:reach} with $r_0=\sqrt{h}$.

\begin{corollary} 
Let $1\le n\le 6$, $h>0$. Every nonempty $E^h_k$, $k\ge 1$ satisfies
\begin{equation}\label{tubeest}
 \|A_{E^h_k}\|_\infty\le C/\sqrt h,
 \qquad \reach(\partial E^h_k)\ge c\sqrt h\,.
\end{equation}
In particular, $E^h_k$  satisfies the interior and exterior ball conditions with any radius smaller than $c\sqrt h$. The constants $C,c>0$ depend only on $n$.
\end{corollary}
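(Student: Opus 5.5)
The plan is to apply Lemma~\ref{lem:reach} with $r_0=\sqrt h$, taking $E=E^h_k$ for a fixed nonempty iterate with $k\ge1$. Lemma~\ref{lem:reg} guarantees that $\partial E^h_k$ is compact and of class $C^{2,\alpha}$, so the lemma can be applied once its three hypotheses are checked with constants depending only on $n$.

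First, Proposition~\ref{prop:curvature} gives $|A_{E^h_k}|\le C_n/\sqrt h$ everywhere on $\partial E^h_k$. This is hypothesis (1) with $K=C_n$.

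Second, the density and area estimates \eqref{density}--\eqref{area} of Lemma~\ref{lem:prel-est} hold for all $x\in\partial E^h_k$ and all $0<r\le\gamma\sqrt h$, with dimensional constants. One adjustment is needed, because Lemma~\ref{lem:reach} asks for them up to $r\le r_0$. If $\gamma<1$, I would take $r_0=\gamma\sqrt h$ instead of $\sqrt h$; this only changes $K$ to $C_n\gamma$ and affects the final constant by a dimensional factor.

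Third, \eqref{quasi}, together with the remark following \eqref{eq:almost-min}, shows that $E^h_k$ is a $(Ch^{-1/2},\gamma h^{1/2})$-minimizer of the perimeter in every ball. With $r_0=\gamma\sqrt h$ this is a $(L/r_0,r_0)$-minimizer with $L=C\gamma$, so hypothesis (3) holds. Lemma~\ref{lem:reach} then gives $\reach(\partial E^h_k)\ge c\gamma\sqrt h$, with $c$ depending only on the dimensional constants above. This proves \eqref{tubeest}.

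For the ball conditions, set $\rho<\reach(\partial E^h_k)$ and take $x\in\partial E^h_k$. Consider the points $x\pm\rho\nu(x)$. Every point of $\partial E^h_k$ within distance $\rho$ of either point has $x$ as its unique nearest boundary point. Consequently the open balls $B_\rho(x\pm\rho\nu(x))$ do not meet $\partial E^h_k$. Each of these balls is connected and touches $\partial E^h_k$ only at $x$. The outward orientation of $\nu$ then places $B_\rho(x-\rho\nu)$ inside $E^h_k$ and $B_\rho(x+\rho\nu)$ in its complement. This yields the interior and exterior ball conditions for every radius below $c\sqrt h$.

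The only step requiring care is the bookkeeping of the scale $r_0$ so that all three hypotheses hold on the same range. Everything else is a direct citation of the earlier results.
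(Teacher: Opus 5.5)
Your proposal is correct and follows the paper's argument, which is exactly Lemma~\ref{lem:reach} applied at scale $r_0\simeq\sqrt h$, with the hypotheses supplied by Proposition~\ref{prop:curvature}, \eqref{density}--\eqref{area} and \eqref{quasi}; the $r_0=\gamma\sqrt h$ bookkeeping and the derivation of the ball conditions are harmless additions. The one sentence to rephrase is in the ball-condition step: what you need is that for $\rho<\reach(\partial E^h_k)$ the point $x\pm\rho\,\nu(x)$ has $x$ as its unique nearest point on $\partial E^h_k$ (the standard normal-ray property of sets of positive reach), so that $\mathrm{dist}(x\pm\rho\,\nu(x),\partial E^h_k)=\rho$.
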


 Lastly, we record the following standard  computations in normal cordinates.

\begin{lemma}[Tubular estimates]\label{lem:tubecalc}
If $F$ satisfies \eqref{tubeest}, then for $|s|<a\sqrt h$, with fixed sufficiently small $a>0$, the map $\Psi(p,s)=p+s\nu_F(p)$ is injective and, setting $F_s=\{d_F<s\}$, it holds 
\begin{align}
 &J_\Psi(p,s)=\prod_{i=1}^n(1+s\kappa_i(p)),
 &|J_\Psi(p,s)-1|\le C|s|/\sqrt h\,,\label{jac}\\
 &P(F_s)\le(1+C|s|/\sqrt h)P(F),
 &|F_s\triangle F|\le C|s|P(F)\,,\label{parallel}\\
 &\nabla d_F(\Psi(p,s))=\nu_F(p),
 &\|D^2d_F\|\le C/\sqrt h\quad\hbox{in  } \{|d_F|< a \sqrt{h}\} \,.\label{hessd}
\end{align}
\end{lemma}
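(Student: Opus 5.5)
The plan is to prove the statement with the classical normal-coordinate construction. The only input is the two bounds in \eqref{tubeest}, namely $|A_F|\le C/\sqrt h$ and $\reach(\partial F)\ge c\sqrt h$. Fix $a<c$ so small that $aC<1/2$. I then take each claim in turn.

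\emph{Injectivity and \eqref{jac}.} If $\Psi(p,s)=\Psi(q,s')=y$ with $|s|,|s'|<a\sqrt h<\reach(\partial F)$, then $\mathrm{dist}(y,\partial F)\le\min(|s|,|s'|)<\reach$. So $y$ has a unique nearest point on $\partial F$. The key step is to show that $p$ is that nearest point. The segment $t\mapsto p+t\nu_F(p)$ for $|t|<\reach$ realizes distance $|t|$ to $\partial F$ with foot $p$; this is the standard characterization of reach, since the interior and exterior balls of radius less than $\reach$ tangent at $p$ do not meet $\partial F$. Hence $p=q$, and then $s=s'$.

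For the Jacobian, I compute in a principal frame at $p$: $d\Psi(e_i)=(1+s\kappa_i)e_i$ and $d\Psi(\partial_s)=\nu_F$. This gives $J_\Psi=\prod_i(1+s\kappa_i)$. Since $|s\kappa_i|\le Ca<1/2$, an elementary bound gives $|J_\Psi-1|\le C_n|s|/\sqrt h$.

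\emph{Claim \eqref{hessd}.} By injectivity, $d_F(\Psi(p,s))=s$ on the tube, with the sign matching the outer normal convention. Differentiating $d_F\circ\Psi$ gives $\nabla d_F(\Psi(p,s))=\nu_F(p)$. Then $D^2d_F$ at $\Psi(p,s)$ is the differential of $\nu_F\circ\pi$, where $\pi$ is the nearest-point projection. It has eigenvalues $\kappa_i/(1+s\kappa_i)$ and $0$, which are bounded by $2C/\sqrt h$.

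\emph{Claim \eqref{parallel}.} For $|s|<a\sqrt h$, the level set $\partial F_s=\{d_F=s\}=\Psi(\partial F,s)$ is a $C^1$ hypersurface. By the area formula, $P(F_s)=\int_{\partial F}J_\Psi(p,s)\,d\mathcal H^n\le(1+C|s|/\sqrt h)P(F)$. Next, $F_s\triangle F=\Psi(\partial F\times(0,s))$, or the analogous set for $s<0$. The coarea/change of variables gives
\[
|F_s\triangle F|=\int_0^{|s|}\int_{\partial F}J_\Psi\,d\mathcal H^n\,d\sigma\le \tfrac32|s|P(F).
\]

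The main point requiring care is the first one. One must know that normals of length below the reach realize the distance, and that $d_F$ is $C^1$ there, which was recalled after \eqref{def:reach}. Everything else is routine.
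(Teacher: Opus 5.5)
Your proposal is correct and follows the same route as the paper, which simply invokes the area formula in normal coordinates and the parallel-surface relation $\kappa_i(F_s)=\kappa_i(F)/(1+s\kappa_i(F))$ (exactly your Hessian eigenvalues), citing Gilbarg--Trudinger; you supply the details, using the reach bound for injectivity and the curvature bound for the Jacobian and the Hessian. The only slip is cosmetic: from $|s\kappa_i|<1/2$ you get $J_\Psi\le(3/2)^n$ rather than $3/2$, which is harmless since the statement only claims $|F_s\triangle F|\le C|s|P(F)$.
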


The formulas above  follow straightforwardly 
using the area formula in normal coordinates and using the relation $\kappa_i(F_s)=\kappa_i(F)/(1+s\kappa_i(F))$, see for instance \cite{GilbargTrudinger}, where $\{\kappa_i(F)\}$ denote  principal curvatures.

\section{Brakke's inequality}\label{sec:Brakke}
The goal of this section is to first derive an approximate version of Brakke's inequality holding along the discrete flow, and then pass this property to the limit.

\subsection{Anzellotti's pairings}\label{sec:Anz}
We gather some technical results about \textit{divergence-measure pairings} in the sense of Anzellotti \cite{Anz}, see also  \cite{CraDec}. 
We recall the relevant notion only in the generality needed for the present paper.

Consider an open domain $\Omega\subset \R^{n+1}$ and  a vector field $z\in L^\infty(\Omega;\R^{n+1})$ with bounded divergence $\diver z\in L^\infty(\Omega)$. For every $u\in BV(\Omega)$ we can define the \textit{pairing} between $z$ and $Du$ as the distribution $(z,Du)$ acting on $C^\infty_c$ by
\[
    (z,Du)(\varphi)=-\int_\Omega u\varphi \diver z -\int_\Omega u z\cdot\nabla \varphi, \qquad  \forall \varphi\in C^{\infty}_c(\Omega).
\]
It turns out that such a pairing is a Radon measure satisfying $(z,Du)\ll |Du|$. Actually, it holds 
\begin{equation}\label{eq:pairing-bound}
 |(z,Du)|\leq \|z\|_{L^\infty(\Omega)}\,|Du|
\end{equation} 
in the sense of measures, 
\cite[Theorem 1.5]{Anz} and
\cite[Section 2.5]{CraDec}.

Given a set of finite perimeter  $E$, it is possible to define a notion of normal trace of the vector field $z$ on $\partial^* E$. For a $C^1$ open subset  $\Omega'\Subset \Omega$, the \textit{normal trace} of $z$ on $\partial \Omega'$ with respect to the outer normal $\nu_{\Omega'}$ is the distribution
\[
 [z,\nu_{\Omega'}](\varphi)=\int_{\Omega'} z\cdot \nabla \varphi + \int_{\Omega'}\varphi \diver z, \qquad  \forall \varphi \in C^\infty_c(\Omega).
\]
The normal trace is actually absolutely continuous with respect to $\mathcal{H}^n\llcorner\partial \Omega'$. The Radon-Nikodym derivative of $[z,\nu_{\Omega'}]$ with respect to $\mathcal{H}^n\llcorner\partial \Omega'$, still denoted for simplicity as $[z,\nu_{\Omega'}]$, satisfies 
\begin{equation}\label{eq:nor-tr-est}
    \|[z,\nu_{\Omega'}]\|_{L^\infty(\partial \Omega')}\le \|z\|_{L^\infty(\Omega)}.
\end{equation}
For a set of finite perimeter $E$,   the normal trace of $z$ on $\partial ^*E$ with respect to the outer normal $\nu_E$ can be  defined on every $C^1$ component as a consequence of De Giorgi's structure theorem \cite[Theorem 15.9]{Maggi}, see \cite{Anz} for details. In particular, consider the instance where  $E\subset\Omega'\Subset \Omega $ has finite perimeter, it holds 
\begin{equation}\label{eq:pairing-trace}
 (z,D\chi_E)
 =-[z,\nu_E]\,|D\chi_E|.
\end{equation}
Moreover, for every $\varphi\in C_c^1(\Omega)$, the Gauss--Green formula reads
\begin{equation}\label{eq:pairing-gauss-green}
 \int_E \varphi\,\diver z
 +\int_E z\cdot\nabla\varphi
 =\int_{\partial^*E}\varphi[z,\nu_E]\,
     d\mathcal H^n;
\end{equation}
see \cite[Theorem 5.1]{CraDec}. 
We shall also use the coarea formula for the pairing: as Radon measures it holds 
\begin{equation}\label{eq:pairing-coarea}
 (z,Du)=\int_{\mathbb R}(z,D\chi_{\{u>\lambda\}})\,d\lambda
 .
\end{equation}
Under the assumptions above this follows from
\cite[Proposition 2.7]{Anz}; see also
\cite[Theorem~4.2]{CraDec}.

\subsection{Common ROF calibration}
We use the level-set reformulation of the scheme \eqref{scheme} due to Chambolle \cite{Chambolle} in terms of a  Rudin-Osher-Fatemi \cite{RuOsFa:92} functional (ROF in short). All pairings below are Anzellotti pairings as recalled in Section \ref{sec:Anz}.

To avoid an infinite  ROF energy for $d_F$ for instance, we work on a  flat torus $\mathbb  T^{n+1}$. Up to rescaling we can assume $E_0\subseteq B_{1/8}$. By the comparison principle,  every discrete (and thus flat) flow $\{E^h_k\}_{k\in\N}$ starting from $E_0$ will stay inside $B_{1/8}$. Moreover, the periodic signed distance to  $E^h_k$ agrees with $d_{E^h_k}$ on $B_{1/4}\subseteq  [-\tfrac12,\tfrac12]^{n+1}$.  For small $h$, the zero sublevels below also lie in this larger ball. All set energies compared there are therefore the original Euclidean energies and thus the localization in $\mathbb T^{n+1}$ does not change the evolving sets.

We start by recalling  classical estimates for the ROF problem.

\begin{lemma}[ROF estimates]\label{lem:ROF}
Let $F\subset \mathbb T^{n+1}$. The (unique) minimizer
\[
 u=\mathop{\rm argmin}_{v\in BV\cap L^2}
 \left\{|Dv|(\mathbb T^{n+1})+\frac1{2h}\int|v-d_F|^2\right\}
\]
admits a calibration $z\in L^\infty(\mathbb T^{n+1};\R^{n+1})$ satisfying in $\mathbb T^{n+1}$
\begin{equation}\label{zconditions}
 |z|\le1,\qquad \diver z=(u-d_F)/h,\qquad (z,Du)=|Du|,
\end{equation}
with the last equality intended in the sense of Radon measures. Moreover,
\begin{equation}\label{zbound}
 \Lip(u)\le1,\qquad \|u-d_F\|_\infty\le C\sqrt h,
 \qquad \|\diver z\|_\infty\le C/\sqrt h.
\end{equation}
\end{lemma}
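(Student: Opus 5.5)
The plan is to obtain the calibration by convex duality for the strictly convex ROF functional, and then derive the three bounds in \eqref{zbound} from the translation invariance of the problem and from comparison with constants.

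\textbf{Existence, uniqueness and the calibration.} The functional
\[
v\mapsto |Dv|(\mathbb T^{n+1})+\frac1{2h}\int|v-d_F|^2
\]
is lower semicontinuous in $L^2$, coercive and strictly convex, since $d_F\in L^\infty(\mathbb T^{n+1})$. Compactness in $BV$ then gives existence, and strict convexity gives uniqueness. For the calibration I would use the Fenchel--Rockafellar dual
\[
\sup\Big\{\int d_F\,\diver z-\frac h2\int|\diver z|^2\ :\ z\in L^\infty,\ |z|\le1,\ \diver z\in L^2\Big\}.
\]
The supremum is attained by weak-$*$ compactness of the constraint set. There is no duality gap: the total variation is the support function of $\{\diver z:|z|\le1\}$ in $L^2$, and on the torus this set is closed. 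The optimality conditions for the primal-dual pair then read
\[
\diver z=(u-d_F)/h,\qquad \int u\,\diver z=-|Du|(\mathbb T^{n+1}).
\]
Using the definition of the Anzellotti pairing with $\varphi\equiv1$, the second identity says $(z,Du)(\mathbb T^{n+1})=|Du|(\mathbb T^{n+1})$. Since $(z,Du)\le|Du|$ as measures by \eqref{eq:pairing-bound}, equality of total masses forces $(z,Du)=|Du|$ as measures. This gives \eqref{zconditions}.

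\textbf{Lipschitz bound.} For $\tau\in\R^{n+1}$, the translate $u(\cdot+\tau)$ minimizes the problem with datum $d_F(\cdot+\tau)$. The ROF problem has an $L^\infty$-contraction property: if $f_1\le f_2+c$ then $u_1\le u_2+c$. One proves this by comparing the energies of $\min(u_1,u_2+c)$ and $\max(u_1,u_2+c)$, using the submodularity of the total variation and uniqueness of minimizers. Since $d_F$ is $1$-Lipschitz, applying the contraction to the translated datum yields $|u(x+\tau)-u(x)|\le|\tau|$, that is $\Lip(u)\le1$.

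\textbf{Sup bound and conclusion.} This is the main obstacle. I would argue by local comparison with explicit barriers built from the cone or ball profile. Fix $x_0$ with $d_F(x_0)=\delta$. Then $d_F\ge\delta-|x-x_0|$, so by the contraction property $u$ lies above the ROF solution with datum $\delta-|x-x_0|$, truncated suitably on the torus. For the datum $\delta-r$ one has an explicit radial solution: the total variation flattens the tip of the cone on a ball of radius $\rho$ where the solution is constant. Balancing the term $\frac1h\int_{B_\rho}(\text{cone}-\text{const})$ against the perimeter $P(B_\rho)$ gives $\rho\sim\sqrt h$ and a tip loss of order $\sqrt h$. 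Hence $u(x_0)\ge\delta-C\sqrt h$, and the symmetric barrier gives the upper bound, so $\|u-d_F\|_\infty\le C\sqrt h$. I expect the care to lie in making this explicit radial computation, or alternatively Chambolle's level-set argument relating $\{u<s\}$ to the scheme together with the bound $\sup_{\partial E}|d_F|\le C\sqrt h$ of Lemma~\ref{lem:prel-est}, rigorous on the torus. Finally, $\|\diver z\|_\infty=\|u-d_F\|_\infty/h\le C/\sqrt h$ follows immediately from the first two bounds.
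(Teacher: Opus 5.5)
Your calibration step (Fenchel--Rockafellar duality, then equality of total masses forcing $(z,Du)=|Du|$) and your proof of $\Lip(u)\le 1$ (translation invariance plus the $L^\infty$-contraction from the $\min/\max$ comparison and uniqueness) are correct. They follow the paper's route in more detail.

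The gap is in $\|u-d_F\|_\infty\le C\sqrt h$, which is the real content of the lemma, and your barrier argument does not prove it as written. Your contraction principle compares finite-energy minimizers on $\mathbb T^{n+1}$. The cone $\delta-|x-x_0|$ is not periodic, and in $\R^{n+1}$ it has infinite ROF energy. Its radial solution does exist: it is flat on a ball of radius $\sqrt{(n+2)h}$ and lowers the tip by $\tfrac{2(n+1)}{\sqrt{n+2}}\sqrt h$. But it is not a minimizer to which your comparison applies. Truncating or periodizing the cone on the torus does not fix this: the resulting ROF solution is no longer explicit. Showing it still agrees with the radial profile near $x_0$ needs a localization argument you do not give, for instance a comparison principle for sub/supersolutions of the Euler--Lagrange system. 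Your level-set alternative is also left open. The sets $\{u<s\}$ minimize $P(E)+\frac1h\int_E(d_F-s)$, and $d_F-s$ is not a signed distance. So you would have to check that the proof of $\sup_{\partial E}|d_F|\le C\sqrt h$ uses only the $1$-Lipschitz property of the datum.

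The missing idea is that no barrier is needed: the calibration gives the bound directly. By your Lipschitz bound, $w:=u-d_F$ is $2$-Lipschitz. Let $x$ be a maximum point of $w$ with $m:=w(x)>0$; then $w\ge m-2r$ on $B_r(x)$. Integrate $\diver z=w/h$ over $B_r(x)$, with $r$ below the injectivity radius of the torus, and use Gauss--Green with $|[z,\nu]|\le 1$:
\[
\frac{m-2r}{h}\,|B_r|\le \mathcal H^n(\partial B_r)=\frac{n+1}{r}\,|B_r|,
\qquad\text{i.e.}\qquad m\le 2r+\frac{(n+1)h}{r}.
\]
Taking $r=\sqrt h$ gives $m\le C\sqrt h$, and the same argument applied to $-w$ concludes. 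This is the paper's argument, and it uses only ingredients you had already established.
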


\begin{proof}
Convex  duality for the total variation functional  gives the calibration \eqref{zconditions}, see for instance \cite[Section 2.2]{ChaCascreNovPoc-intro}. 
By translation invariance, it is not difficult to see that  $u$ is $1$-Lipschitz, thus  $w=u-d_F$ is $2$-Lipschitz. Consider a maximum point  $w(x)=m>0$, then $w\ge m-\sqrt{h}$ on $B_{\sqrt{h}/4}(x)$. For $h$ small this ball is inside the injectivity radius of the flat torus. Integrating $w/h=\diver z$ over $B_{\sqrt{h}/4}(x)$, using Gauss-Green and  $|z|\le1$ gives $m-\sqrt{h} \le (1+C) \sqrt{h}$. Since  the same argument applies to $-w$, we deduce \eqref{zbound}.
\end{proof}
Note that $u\le0$ implies $d_F\le C\sqrt h$ by  \eqref{zbound}, which, for $h$ small,  shows that the  zero sublevel set of $u$ is well contained in $\mathbb T^{n+1}$.

We then show that the calibration provided by the ROF problem is a calibration for every minimizer of the incremental problem \eqref{scheme}. The argument  is inspired by the proof of \cite[Proposition 2.2]{Chambolle}.

\begin{lemma}[Calibration of every minimizer]\label{lem:allmin}
Let $u,z$ be as above.  If $E$ is any minimizer of \eqref{scheme} with $F$ replacing $E^h_{k-1}$, then 
\begin{equation}\label{sign}
 \{u<0\}\subseteq E\subseteq \{u\le0\}, \text{ up to }\mathcal L^{n+1}\text{-negligible set}
\end{equation}
\begin{equation}\label{trace}
 [z,\nu_E]=1\quad\HH^n\text{-a.e. on }\partial^*E\,,
\end{equation}
where \eqref{trace} is intended in the sense of normal traces. 
\end{lemma}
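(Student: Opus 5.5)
The plan is to compare the energy $\mathcal J_F(E)$ of an arbitrary minimizer with the energies of the sublevel sets $\{u<\lambda\}$ and $\{u\le\lambda\}$. The calibration $z$ makes this comparison a chain of inequalities, and the trace identity will drop out as the equality case. Everything rests on one inequality, valid for any finite perimeter set $G$ and any $\lambda$. By the Gauss--Green formula \eqref{eq:pairing-gauss-green} with $\varphi\equiv1$ (on the torus), together with \eqref{eq:nor-tr-est}, we have
\[
P(G)\ \ge\ \int_{\partial^*G}[z,\nu_G]\,d\HH^n=\int_G\diver z=\frac1h\int_G(u-d_F)\,dx .
\]
Adding $\frac1h\int_G d_F$ to both sides gives
\[
\mathcal J_F(G)\ \ge\ \frac1h\int_G u\,dx ,\qquad\text{with equality iff } [z,\nu_G]=1\ \ \HH^n\text{-a.e. on }\partial^*G. \tag{$\ast$}
\]

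The first step is to show that $\{u<0\}$ and $\{u\le0\}$ attain equality in $(\ast)$ and are themselves minimizers. The coarea formulas \eqref{eq:pairing-coarea} for the pairing and for $|Du|$, combined with $(z,Du)=|Du|$, \eqref{eq:pairing-trace} and $|[z,\nu]|\le1$, give $[z,\nu]=1$ on $\partial^*\{u>\lambda\}$ with the inner orientation. Hence $[z,\nu_{\{u\le\lambda\}}]=1$ on the reduced boundary of $\{u\le\lambda\}$ for a.e.\ $\lambda$. To reach $\lambda=0$ exactly, I would approximate: $\{u<0\}=\bigcup_{\lambda<0}\{u\le\lambda\}$ and $\{u\le0\}=\bigcap_{\lambda>0}\{u\le\lambda\}$, using lower semicontinuity of $P$ and continuity of the volume terms.

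For any finite perimeter $G$, apply $(\ast)$ to the level sets $\{u\le\lambda\}$, where equality holds. This yields $\mathcal J_F(\{u\le\lambda\})-\frac{\lambda}{h}|\{u\le\lambda\}|\le \mathcal J_F(G)-\frac\lambda h|G|$, since $\frac1h\int (u-\lambda)\chi$ is minimized by $\chi=\chi_{\{u\le\lambda\}}$. This is Chambolle's argument: $\{u\le\lambda\}$ minimizes $\mathcal J_F-\frac\lambda h|\cdot|$. Letting $\lambda\to0^\pm$ and using lower semicontinuity shows that $\{u<0\}$ and $\{u\le0\}$ minimize $\mathcal J_F$. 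It also shows that the common minimal value $m$ satisfies $m=\frac1h\int_{\{u<0\}}u=\frac1h\int_{\{u\le0\}}u$, since the set $\{u=0\}$ contributes nothing.

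The second step handles an arbitrary minimizer $E$. Applying $(\ast)$ to $E$ gives
\[
m=\mathcal J_F(E)\ \ge\ \frac1h\int_E u\ \ge\ \frac1h\int_{\{u<0\}}u = m .
\]
The second inequality holds because $\int_E u\ge\int u\chi_{\{u<0\}}$, with equality iff $E\supseteq\{u<0\}$ and $E\subseteq\{u\le0\}$ up to null sets. Both inequalities are therefore equalities. Equality in the second gives \eqref{sign}. Equality in the first, by the equality case of $(\ast)$ and the fact that $[z,\nu_E]\le1$ pointwise a.e., gives \eqref{trace}.

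The main obstacle is the first step. One must justify that the level sets of $u$ are calibrated via the coarea formula for pairings, and pass to the limit $\lambda\to0$ to obtain that $\{u<0\}$ and $\{u\le 0\}$ realize $m=\frac1h\int_{\{u<0\}}u$. Here I would use that $\lambda\mapsto \frac1h\int_{\{u\le\lambda\}}(u-\lambda)$ is continuous, and that lower semicontinuity of $\mathcal J_F$ together with the minimality inequality gives upper bounds matching $(\ast)$. The remaining steps are pure bookkeeping.
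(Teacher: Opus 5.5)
Your proposal is correct and follows essentially the same route as the paper: the calibration inequality $(\ast)$, the coarea formula to calibrate almost every sublevel set of $u$, and the passage $\lambda\nearrow 0$ by lower semicontinuity to get $P(\{u<0\})=\int_{\{u<0\}}\operatorname{div} z$. The conclusion is then the equality case of $\mathcal J_F(E)\ge \frac1h\int_E u\ge \frac1h\int_{\{u<0\}}u$, which is exactly the paper's splitting of $\mathcal J_F(E)-\mathcal J_F(\{u<0\})$ into three nonnegative terms. Your detour through Chambolle's shifted minimality of $\{u\le\lambda\}$, and through $\{u\le 0\}$, is unnecessary: the calibration of $\{u<0\}$ together with $(\ast)$ already shows that $\{u<0\}$ is a minimizer with $m=\frac1h\int_{\{u<0\}}u$.
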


\begin{proof}
For a bounded set of finite perimeter  $A\subset \mathbb T^{n+1}$, note that  
\[
 P(A)-\int_A\diver z
 =\int_{\partial^*A}(1-[z,\nu_A])d\HH^n\ge0\,,
\]
where the the first identity follows from \eqref{eq:pairing-gauss-green} and  last inequality follows from \eqref{eq:nor-tr-est}.
By the coarea formula \eqref{eq:pairing-coarea}
it holds 
\[
(z,Du)=\int_\R (z,D\chi_{\{u>\lambda\}})d \lambda, \qquad |Du|=\int_\R|D\chi_{\{u>\lambda\}}|d \lambda\,,
\]
in the sense of measures. Since in general $|(z,D\chi_A)|\le |D\chi_A|$ as $|z|\le 1$, by the pairing \eqref{zconditions}
we get 
\[
0=|Du|-(z,Du)=\int_\R \Big( |D\chi_{\{u>\lambda\}}|- (z,D\chi_{\{u>\lambda\}}) \Big) d\lambda \ge 0,
\]
and thus the integrand is zero for a.e. $\lambda$. Since $|\{u=\lambda\}|=0 $ for a.e. $\lambda$, $D \chi_{\{u<\lambda\}} = - D \chi_{\{u>\lambda\}}$ and therefore 
\[
|D\chi_{\{u<\lambda\}}|=- (z,D\chi_{\{u<\lambda\}})\quad \text{for a.e. }\lambda.
\]
Thus, by Gauss-Green \eqref{eq:pairing-gauss-green} we deduce for a.e. $\lambda$
\[
\int_{\{ u<\lambda \}} \diver z=-(z,D\chi_{\{u<\lambda \}}) (\mathbb T^{n+1})=|D\chi_{\{u<\lambda\}}|(\mathbb T^{n+1})=P(\{u<\lambda \})\,.
\]
Choose levels $\lambda_j\nearrow0$ satisfying the previous. The characteristic functions $\chi_{\{u<\lambda_j\}}$ converge in $L^1(\mathbb T^{n+1})$ to that of $E_-:=\{u<0\}$. Since $\diver z\in L^\infty(\mathbb T^{n+1})$, lower semicontinuity gives
\[
 P(E_-)\le\liminf_jP(\{u<\lambda_j\})
 =\int_{E_-}\diver z\le P(E_-).
\]
Thus $P(E_-)=\int_{E_-}\diver z$. For $\cJ_F(G)=P(G)+h^{-1}\int_G d_F$, using the previous identity one gets
\begin{equation}\label{defect}
 \cJ_F(G)-\cJ_F(E_-)
 =P(G)-\int_G \diver z+\int_{G\cap\{u> 0\}}\tfrac uh
                  +\int_{G^c\cap\{u<0\}}(-\tfrac uh).
\end{equation}
The term $P(G)-\int_G \diver z$ and the other two terms on the right are nonnegative. Consider another minimizer $E$ of \eqref{scheme}. Both $E$ and $E_-$ are compactly contained in $\mathbb T^{n+1}$ so the energies are the same as the Euclidean counterparts. Minimality of $E$ makes the left hand side of \eqref{defect}, with $G=E$, non positive. Hence  every term vanishes. The last two give \eqref{sign}; the first gives \eqref{trace}.
\end{proof}

 We introduce the following notation: if $\{E^h_k\}$ is a discrete flow, we denote as $z^h_k$ a   calibration produced by Lemma~\ref{lem:allmin} with $F=E^h_{k-1}$, thus  relative to the $k$-th iteration of the scheme \eqref{scheme}. Moreover, along a discrete flow $E^h_k$ we define the quantity 
 \[
    S^h_k:=P(E^h_{k-1})-\int_{E^h_{k-1}}\diver z^h_k, \quad \text{for }k\ge 1\, .
 \]

By the very existence of the calibration $z$ we deduce the following almost-interpolation inequality, which will be crucial to show Brakke's inequality in the limit.

\begin{lemma} 
For $k\ge 1$ it holds 
\begin{equation}\label{Ssum}
 0\le S^h_k\le P(E^h_{k-1})-P(E^h_{k})-\tfrac1hD(E^h_{k},E^h_{k-1}),
 \qquad \sum_{k=1}^KS^h_k\le M\,.
\end{equation}
For any compact set of finite perimeter $G\subset B_{1/4}$ and  $\varphi\in C_c^1$, $\varphi\ge0$, it holds 
\begin{equation}\label{local}
 P_\varphi(E^h_{k})-P_\varphi(G)
 \le-\frac{D_\varphi(E^h_{k},E^h_{k-1})}h+\frac{D_\varphi(G,E^h_{k-1})}h
     +\int(\chi_{E^h_{k}}-\chi_G)z\cdot\nabla\varphi\,.
\end{equation}
\end{lemma}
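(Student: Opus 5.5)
The plan is to use the defect identity \eqref{defect} from the proof of Lemma~\ref{lem:allmin}, once unweighted and once weighted, with $F=E^h_{k-1}$, $E=E^h_k$ and $z=z^h_k$.

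\emph{Proof of \eqref{Ssum}.} Nonnegativity of $S^h_k$ follows from $P(A)-\int_A\diver z=\int_{\partial^*A}(1-[z,\nu_A])\,d\HH^n\ge 0$, applied with $A=E^h_{k-1}$. For the upper bound, take $G=E^h_{k-1}$ in \eqref{defect}. Since the last two terms there are nonnegative, we get
\[
S^h_k\le \cJ_F(E^h_{k-1})-\cJ_F(E_-)\,.
\]
By Lemma~\ref{lem:allmin}, $E^h_k$ is itself a minimizer, so $\cJ_F(E_-)=\cJ_F(E^h_k)$. Recall that $\cJ_F(G)=P(G)+\frac1h D(G,F)$ up to the constant $\frac1h\int_F d_F$, and that $D(F,F)=0$. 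Hence
\[
\cJ_F(E^h_{k-1})-\cJ_F(E^h_k)=P(E^h_{k-1})-P(E^h_k)-\tfrac1h D(E^h_k,E^h_{k-1})\,,
\]
which is the claimed bound. The right-hand side telescopes in $k$, since the dissipation terms are nonnegative. Summing over $k\le K$ therefore gives $\sum_k S^h_k\le P(E_0)=M$.

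\emph{Proof of \eqref{local}.} Apply the Gauss--Green formula \eqref{eq:pairing-gauss-green} with the test function $\varphi$ to both $E=E^h_k$ and $G$. For $E$, the trace identity \eqref{trace} gives $[z,\nu_E]=1$, so
\[
P_\varphi(E)=\int_E\varphi\diver z+\int_E z\cdot\nabla\varphi\,.
\]
For $G$, the bound \eqref{eq:nor-tr-est} together with $\varphi\ge0$ gives
\[
P_\varphi(G)\ge \int_G\varphi\diver z+\int_G z\cdot\nabla\varphi\,.
\]
Subtracting yields
\[
P_\varphi(E)-P_\varphi(G)\le \int(\chi_E-\chi_G)\varphi\,\frac{u-d_F}{h}+\int(\chi_E-\chi_G)z\cdot\nabla\varphi\,.
\]

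It remains to treat the first term. Since $\chi_E-\chi_G=(\chi_E-\chi_F)-(\chi_G-\chi_F)$, we have
\[
-\int(\chi_E-\chi_G)\varphi\,d_F=-D_\varphi(E,F)+D_\varphi(G,F)\,.
\]
The remaining term $\frac1h\int(\chi_E-\chi_G)\varphi u$ must be shown to be nonpositive. By \eqref{sign}, $\chi_E=1$ where $u<0$ and $\chi_E=0$ where $u>0$. Then $(\chi_E-\chi_G)u\le0$ pointwise a.e.: where $u<0$ the factor $1-\chi_G$ is nonnegative, and where $u>0$ the factor $-\chi_G$ is nonpositive. Multiplying by $\varphi\ge0$ preserves the sign, which gives \eqref{local}.

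\emph{Main obstacle.} The delicate point is justifying the Gauss--Green formula for $G$ and $E$ within $\mathbb T^{n+1}$, i.e.\ checking compact containment and the use of normal traces on reduced boundaries. This is ensured by $G\subset B_{1/4}$ and the localization set up in the torus construction. Everything else is algebraic.
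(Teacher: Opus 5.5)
Your proposal is correct and matches the paper's proof essentially verbatim for \eqref{local}. For \eqref{Ssum} you pass through the defect identity \eqref{defect} with $G=E^h_{k-1}$, whereas the paper writes directly $S^h_k=P(E^h_{k-1})-P(E^h_k)-\frac1h D(E^h_k,E^h_{k-1})+\frac1h\int(\chi_{E^h_k}-\chi_{E^h_{k-1}})u$, using $P(E^h_k)=\int_{E^h_k}\diver z$. This is the same computation, since the two nonnegative tail terms of \eqref{defect} equal exactly minus that last integral. One small correction of attribution: $\cJ_F(E_-)=\cJ_F(E^h_k)$ holds because the proof of Lemma~\ref{lem:allmin} shows that the left-hand side of \eqref{defect} vanishes at $G=E^h_k$ (equivalently, $E_-$ is also a minimizer). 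It does not follow from Lemma~\ref{lem:allmin} making $E^h_k$ a minimizer, which it is by definition.
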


\begin{proof}
Set for simplicity  $E=E^h_k$, $F=E^h_{k-1}$ and $z=z^h_k$.
Using the calibration and the identity $\diver z=(u-d_F)/h$ we get 
\[
 S_k=P(F)-P(E)-\tfrac 1h D(E,F)+\int(\chi_E-\chi_F)\tfrac uh.
\]
The last integral is non positive by \eqref{sign}; summing and using \eqref{sums} proves \eqref{Ssum}.
By $|z|\le 1$ and  the  Gauss-Green formula for Azellotti's pairings 
\begin{equation}\label{eq:cali-pb}
P_\varphi(G)\ge \int_{\partial^* G}\varphi[z,\nu_G] = \int_G \diver(\varphi z).
\end{equation}
Equality holds  for $G=E$ by \eqref{trace}. Subtract the formulas for $E,G$, use $h\diver z= u-d_F$ and 
\[
\int (\chi_E-\chi_G)\varphi u\le  \int_{\{u\le 0 \}\setminus G}\varphi u - \int_{G\cap\{u\ge 0\}} \varphi u  \le0
\]
to obtain \eqref{local}.
\end{proof}

\subsection{Towards Brakke's inequality }

For ease of notation, in this section we fix $k\ge 2$ and 
\[
    E=E^h_k,\quad F=E^h_{k-1}, \quad  S=S^h_k\,, 
\]
In particular,  both  $E,F$ satisfy  the curvature estimates \eqref{tubeest}.  Fix $\varphi\in C_c^2$, $\varphi\ge0$, and $b\in C_c^2(\R^{n+1};\R^{n+1})$. Constants may depend on these tests. All integrals without an indicated measure are Lebesgue integrals.

Given the vector field $b$, let $\Phi_h$ be the associated flow at time $h$. We recall that this is defined as $\Phi_h(x):=\Phi(x,h)$ with $\Phi(x,t)$ solving 
\[
    \partial_t \Phi(\cdot,t)= b(\Phi(\cdot,t)),\quad
    \Phi(\cdot,0)=id \,.
\]
We  apply  \eqref{local} with competitor $G=\Phi_h(F)$   to  obtain
\begin{equation}\label{eq:Brakke0}
 P_\varphi(E)-P_\varphi(F)
 \le 
 -\frac{D_\varphi(E,F)}h
 +\frac{D_\varphi(G,F)}h                    
+\int(\chi_E-\chi_G)z\cdot\nabla\varphi
 +P_\varphi(G)-P_\varphi(F)\,.    
\end{equation}
Note that $\Phi_h(F)$ lies in a  neighborhood of $F$ of size $h$, thus for all sufficiently small $h$ the curvature estimates of Proposition~\ref{prop:curvature} guarantee that we can employ a normal graph parametrization. The following lemmas  separately estimate the terms appearing in \eqref{eq:Brakke0} and link them to a discrete version of Brakke's inequality.

\begin{lemma}[Dissipation bound] \label{lem:diss-bd}
For $h$ sufficiently small, depending on $b$, it holds 
\begin{equation}\label{endpointineq}
 -\frac{D_\varphi(E,F)}h
 \le h\delta P(E)[\varphi b]
 +\frac h2\int_{\partial F}\varphi|b\cdot\nu_F|^2d\HH^n
 +C\bigl(D(E,F)+h^{3/2}P(F)\bigr)\,.
\end{equation}
\end{lemma}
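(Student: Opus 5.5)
The plan is to combine the Euler--Lagrange equation \eqref{EL} for $E$ with a divergence-theorem identity, which turns $h\,\delta P(E)[\varphi b]$ into a bulk integral over $E\triangle F$. The resulting inequality then reduces to an elementary one-dimensional minimization along the normal rays of $\partial F$. These rays are available at scale $\sqrt h$ thanks to \eqref{tubeest} and Lemma~\ref{lem:tubecalc}.

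\emph{Step 1 (rewriting the first variation).} By \eqref{firstvar} and \eqref{EL},
\[
h\,\delta P(E)[\varphi b]=h\int_{\partial E}H_E\,\varphi\, b\cdot\nu_E\,d\HH^n=-\int_{\partial E}d_F\,\varphi\, b\cdot\nu_E\,d\HH^n .
\]
Since $d_F=0$ on $\partial F$ and $d_F$ is Lipschitz, the divergence theorem applied on $E$ and on $F$ gives
\[
\int_{\partial E}d_F\varphi\, b\cdot\nu_E=\int(\chi_E-\chi_F)\bigl(d_F\diver(\varphi b)+\varphi\, b\cdot\nabla d_F\bigr).
\]
The first part is bounded by $C\,D(E,F)$. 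Moreover, $(\chi_E-\chi_F)d_F=|\chi_E-\chi_F|\,|d_F|$. Hence \eqref{endpointineq} reduces to the claim
\[
\int_{E\triangle F}\varphi\Bigl(\frac{|d_F|}{h}-|b\cdot\nabla d_F|\Bigr)dx+\frac h2\int_{\partial F}\varphi|b\cdot\nu_F|^2d\HH^n\ \ge\ -C\bigl(D(E,F)+h^{3/2}P(F)\bigr).
\]

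\emph{Step 2 (far region and near region).} On $(E\triangle F)\cap\{|d_F|\ge a\sqrt h\}$, the integrand satisfies $|d_F|/h-\|b\|_\infty\ge a/\sqrt h-\|b\|_\infty\ge0$ for $h$ small depending on $b$, so this part can be discarded. On the tube $\{|d_F|<a\sqrt h\}$ I pass to normal coordinates $x=\Psi(p,s)$, using the injectivity of $\Psi$, the Jacobian bound \eqref{jac} and the identity $\nabla d_F(\Psi(p,s))=\nu_F(p)$ from \eqref{hessd}. On each ray this gives the integrand
\[
\varphi(\Psi(p,s))\Bigl(\frac{|s|}{h}-\beta(p,s)\Bigr)J_\Psi(p,s),\qquad \beta(p,s):=|b(\Psi(p,s))\cdot\nu_F(p)|=|b(p)\cdot\nu_F(p)|+O(|s|),
\]
integrated over the slice $\{s:\Psi(p,s)\in E\triangle F\}$.

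\emph{Step 3 (one-dimensional minimization).} The key observation is that the integrand can be negative only where $|s|\le 2h\|b\|_\infty$. Where $|s|\ge 2h\beta$, it is at least $\tfrac{|s|}{2h}\varphi J_\Psi\ge0$, since $J_\Psi\ge\tfrac12$ in the tube. So the slice may be replaced by its portion in $\{|s|\le 2h\|b\|_\infty\}$ at the cost of a nonnegative term. On that thin layer, $J_\Psi=1+O(\sqrt h)$, $\varphi(\Psi(p,s))=\varphi(p)+O(h)$ and $\beta(p,s)=\beta(p,0)+O(h)$. For any measurable set $I\subset\R$,
\[
\int_I\Bigl(\frac{|s|}{h}-\beta\Bigr)ds\ \ge\ \int_0^{h\beta}\Bigl(\frac{s}{h}-\beta\Bigr)ds=-\frac h2\beta^2 .
\]
This yields, on each ray, the lower bound $-\tfrac h2\varphi(p)|b(p)\cdot\nu_F(p)|^2$. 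The error is the layer width $O(h)$ times an integrand of size $O(1)$ times the relative error $O(\sqrt h)$, that is $O(h^{3/2})$ per ray. Integrating in $p$ over $\partial F$ against $d\HH^n$ then produces exactly $-\tfrac h2\int_{\partial F}\varphi|b\cdot\nu_F|^2$ plus an error $Ch^{3/2}P(F)$, which proves the claim of Step~1.

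The main obstacle is the bookkeeping in Step~3. A naive application of the Jacobian bound \eqref{jac} on the whole tube produces an error of order $|s|/\sqrt h$ relative to $|s|/h$, which is not small. The fix is to handle positivity and the exact coefficient separately: positivity on the bulk of the tube uses only $J_\Psi\ge\tfrac12$, and the sharp coefficient $\tfrac12$ is extracted only in the layer of width $O(h)$ around $\partial F$, where all geometric corrections are $O(\sqrt h)$.
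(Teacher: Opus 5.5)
There is a genuine gap, and it is the point the paper itself flags as crucial. In Step 1 you have
\[
(\chi_E-\chi_F)\,\varphi\, b\cdot\nabla d_F=|\chi_E-\chi_F|\,\varphi\,\sigma\, b\cdot\nabla d_F,\qquad \sigma=\operatorname{sgn}(d_F),
\]
and you replace it by the bound $|\chi_E-\chi_F|\,\varphi\,|b\cdot\nabla d_F|$. This throws away the sign. The resulting ray integrand $|s|/h-\beta$ is even in $s$, and the one-dimensional inequality you then use in Step 3 is false. Taking $I=(-h\beta,h\beta)$ gives
\[
\int_{-h\beta}^{h\beta}\Bigl(\frac{|s|}{h}-\beta\Bigr)\,ds=-h\beta^2,
\]
which is the true infimum over measurable $I\subset\R$. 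That is twice the value $-\frac h2\beta^2$ you claim.

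Nothing in your argument stops the slice $\{s:\Psi(p,s)\in E\triangle F\}$ from meeting both $s>0$ (points of $E\setminus F$) and $s<0$ (points of $F\setminus E$). So what you actually prove is \eqref{endpointineq} with $h\int_{\partial F}\varphi|b\cdot\nu_F|^2$ in place of $\frac h2\int_{\partial F}\varphi|b\cdot\nu_F|^2$. The factor matters. The quadratic term in \eqref{dualineq} would become $\frac32 h\int_{\partial F}\varphi|b\cdot\nu_F|^2$. Completing the square in the limit would then only give Brakke's inequality with $-\frac23\varphi|\mathbf H|^2$ instead of $-\varphi|\mathbf H|^2$. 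The Jacobian bookkeeping you name as the main obstacle is not where the difficulty lies.

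The fix is to keep the sign, as the paper does. Since $E\setminus F\subset\{d_F\ge0\}$ and $F\setminus E\subset\{d_F\le0\}$, the integrand on the ray through $p$ is
\[
\frac{|s|}{h}-\operatorname{sgn}(s)\,b(\Psi(p,s))\cdot\nu_F(p),
\]
with the signed normal component. Up to $O(h)$ errors, it can be negative only on the side where $\operatorname{sgn}(s)=\operatorname{sgn}(b\cdot\nu_F)$. Hence for every measurable $I\subset\R$,
\[
\int_I\Bigl(\frac{|s|}{h}-\operatorname{sgn}(s)\,b\cdot\nu_F\Bigr)ds\ \ge\ -\int_0^{h|b\cdot\nu_F|}\Bigl(|b\cdot\nu_F|-\frac sh\Bigr)ds=-\frac h2|b\cdot\nu_F|^2 .
\]
With this change the rest of your argument goes through: drop the region where the integrand is nonnegative, then freeze $\varphi$, $b$ and $J_\Psi$ on the layer of width $O(h)$ with $O(h^{3/2})$ errors per ray. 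This is essentially the paper's proof, which writes the same estimate through the positive part $\int\varphi\bigl(\sigma\nabla d_F\cdot b-|d_F|/h\bigr)_+$.
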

\begin{proof}
Since $d_F=0$ on $\partial F$, Gauss--Green and the Euler-Lagrange equation \eqref{EL} give
\begin{equation}\label{gaussd}
 -h\delta P(E)[\varphi b] = \int_{\partial^* E} d_F \varphi \nu\cdot b - \int_{\partial^* F} d_F \varphi \nu\cdot b 
 =\int(\chi_E-\chi_F)
 \left(\varphi\nabla d_F\cdot b+d_F\diver(\varphi b)\right)\,.
\end{equation}
Write $\sigma=\operatorname{sgn}(d_F)$, so $\chi_E-\chi_F=\sigma|\chi_E-\chi_F|$ a.e.. Bounding  $\diver (\varphi b)\le C$ and $|\chi_E-\chi_F|\le 1$ we get 
\begin{equation}\label{signedbound}
 -h\delta P(E)[\varphi b]- \frac{D_\varphi(E,F)}h
 \le
 \int\varphi\left(\sigma\nabla d_F\cdot b-|d_F|/h\right)_+
       +C D(E,F)\,.
\end{equation}
The positive part lies in $\{|d_F|\le h\|b\|_\infty\}$. Since  $h\|b\|_\infty \ll\sqrt h$, for  $h$ small enough we can use normal coordinates $\Psi$ from Lemma \ref{lem:tubecalc} and \eqref{hessd} to write 
\begin{multline*}
        \int\varphi\left(\sigma\nabla d_F\cdot b-|d_F|/h\right)_+
= 
\int_{\partial F}\int_0^{h\|b\|_\infty} \varphi\circ \Psi \left( ( \nabla d_F\cdot b)\circ\Psi - \frac sh \right)_+ J_\Psi
\\
 +\int_{\partial F}\int^0_{-h\|b\|_\infty} \varphi\circ \Psi \left( -(\nabla d_F\cdot b)\circ\Psi + \frac sh \right)_+ J_\Psi \,.
\end{multline*}
Using Lemma \ref{lem:tubecalc}, we see that freezing the argument of    $b,\varphi$ contribute $O(h^2P(F))$, whereas the Jacobian \eqref{jac} contributes $O(h^{3/2}P(F))$.
Since $\nabla d_F(\Psi(p,s))=\nu_F(p)$, we get 
\begin{multline*}
        \int\varphi\left(\sigma\nabla d_F\cdot b-|d_F|/h\right)_+
= 
\int_{\partial F} \varphi(p)
 \int_0^\infty\bigl(b(p)\cdot\nu_F(p)-s/h\bigr)_+d s \,d\mathcal H^{n}_p
 \\
 +\int_{\partial F}\varphi(p) \int_0^\infty\bigl(-b(p)\cdot\nu_F(p)-s/h\bigr)_+d s\,\,d\mathcal H^{n}_p + O(h^{3/2}P(F))\,.
\end{multline*}
Since an explicit computation yields 
\[
\int_0^\infty\bigl(b(p)\cdot\nu_F(p)-s/h\bigr)_+d s
 + \int_0^\infty\bigl(-b(p)\cdot\nu_F(p)-s/h\bigr)_+d s= \frac h2 |b(p)\cdot \nu_F(p)|^2,
\]
we conclude  \eqref{endpointineq}.
\end{proof}

In the proof above, keeping track of the sign of $d_F$ by  $\sigma$ is crucial. 
Replacing $\sigma\nabla d_F\cdot b$ by $|\nabla d_F\cdot b|$ in \eqref{signedbound} would double the coefficient in the   proof of the approximate version of Brakke's inequality holding at the discrete level.

\begin{lemma}[Calibration error] 
Let $z$ be   a calibration for $E$ produced by Lemma \ref{lem:allmin}. For $0<r<a\sqrt h$, with $a$ as in Lemma \ref{lem:tubecalc}, it holds 
\begin{equation}\label{tubedefect}
 \int_{\{|d_F|<r\}}|z-\nabla d_{F}|^2
 \le C\left(rS +\frac{r^2}{\sqrt h}P(F)\right)\,.
\end{equation}
For $ R\ge 1$ with $hR<a\sqrt h$, and for $K,T$ so that $Kh\le T$,  it holds 
\begin{equation}\label{asum}
 \sum_{k=2}^{K}\int_{E^h_k\triangle E^h_{k-1}}
 |z^h_k-\nabla d_{E^h_{k-1}}|
 \le C_T\left(R^{-1}+h^{1/2}R^{1/2}+h^{1/4}R\right)\,,
\end{equation}
where $C_T$ depends on $M,T$.
Moreover,
\begin{equation}\label{eq:transp-term-old}
 \left|\int(\chi_E-\chi_F)z\cdot\nabla\varphi
           +h\delta P(E)[\nabla\varphi]\right|
 \le C\left(\int_{E \triangle F}|  z-\nabla d_F |+D(E,F)\right)\,.
\end{equation}
\end{lemma}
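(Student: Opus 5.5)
The three estimates will be proved in order. The pointwise fact behind all of them is this: in the tube $\{|d_F|<a\sqrt h\}$ we have $|\nabla d_F|=1$ by \eqref{hessd}, and $|z|\le1$. Hence
\[
|z-\nabla d_F|^2=|z|^2+1-2z\cdot\nabla d_F\le 2\bigl(1-z\cdot\nabla d_F\bigr).
\]
Now integrate this over $\{|d_F|<r\}$, using the coarea formula for $d_F$. The level sets are $\partial F_s$ with $F_s=\{d_F<s\}$, and $\nabla d_F=\nu_{F_s}$ on $\partial F_s$. Apply the Gauss--Green formula \eqref{eq:pairing-gauss-green} for the pairing with $\varphi\equiv1$; this is legitimate since $\partial F_s$ is $C^1$ and the normal trace of $z$ on it is given by $z\cdot\nu_{F_s}$ for a.e.\ $s$. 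The result is
\[
\int_{\{|d_F|<r\}}\bigl(1-z\cdot\nabla d_F\bigr)=\int_{-r}^{r}\Bigl(P(F_s)-\int_{F_s}\diver z\Bigr)\,ds .
\]
We then compare the integrand with $S=P(F)-\int_F\diver z$. By \eqref{parallel}, $|P(F_s)-P(F)|\le C|s|P(F)/\sqrt h$. By \eqref{zbound}, $\int_{F_s\triangle F}|\diver z|\le C h^{-1/2}|F_s\triangle F|\le C|s|P(F)/\sqrt h$. Integrating in $s\in(-r,r)$ gives $2rS+Cr^2P(F)/\sqrt h$, which is \eqref{tubedefect}.

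For \eqref{asum}, fix $k$ and set $r=hR<a\sqrt h$. Split $E^h_k\triangle E^h_{k-1}$ according to whether $|d_{E^h_{k-1}}|<r$ or not.

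On the far part we use $|z-\nabla d|\le2$ (here $\nabla d$ is defined a.e.\ with norm $1$) together with Chebyshev's inequality. This bounds the far part by $2D(E^h_k,E^h_{k-1})/(hR)$. Summing over $k$ and using \eqref{sums} gives at most $2M/R$.

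On the tube part we apply Cauchy--Schwarz and \eqref{tubedefect}, which bounds it by
\[
|E^h_k\triangle E^h_{k-1}|^{1/2}\bigl(hRS^h_k+h^{3/2}R^2M\bigr)^{1/2}.
\]
Summing in $k$ and applying discrete Cauchy--Schwarz, we use three facts: $\sum_{k\ge2}|E^h_k\triangle E^h_{k-1}|\le C(T+1)M$ from \eqref{volsum}; $\sum S^h_k\le M$ from \eqref{Ssum}; and $K\le T/h$. The tube contribution is then at most
\[
C_T\bigl(hRM+h^{1/2}R^2TM\bigr)^{1/2}\le C_T\bigl(h^{1/2}R^{1/2}+h^{1/4}R\bigr).
\]

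For \eqref{eq:transp-term-old}, repeat the computation \eqref{gaussd} with $\varphi b$ replaced by $\nabla\varphi$. This uses that $d_F=0$ on $\partial F$, the Euler--Lagrange equation \eqref{EL} on $\partial E$, and Gauss--Green on $E$ and $F$. It gives
\[
-h\,\delta P(E)[\nabla\varphi]=\int(\chi_E-\chi_F)\bigl(\nabla d_F\cdot\nabla\varphi+d_F\,\Delta\varphi\bigr).
\]
Adding $\int(\chi_E-\chi_F)z\cdot\nabla\varphi$ to $h\,\delta P(E)[\nabla\varphi]$ leaves
\[
\int(\chi_E-\chi_F)(z-\nabla d_F)\cdot\nabla\varphi-\int(\chi_E-\chi_F)d_F\,\Delta\varphi .
\]
The first term is bounded by $\|\nabla\varphi\|_\infty\int_{E\triangle F}|z-\nabla d_F|$. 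The second is bounded by $\|\Delta\varphi\|_\infty D(E,F)$, since $\varphi\in C^2_c$.

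The only delicate point is the first step: justifying the coarea/normal-trace identity for the merely $L^\infty$ field $z$ on the smooth level sets $\partial F_s$. I would handle it by applying \eqref{eq:pairing-gauss-green} to each $F_s$, which is $C^{1}$ for $|s|<a\sqrt h$ by Lemma~\ref{lem:tubecalc}. I would identify the trace with $z\cdot\nu$ by differentiating $s\mapsto\int_{F_s}\diver z$ via Fubini in normal coordinates, which holds for a.e.\ $s$.
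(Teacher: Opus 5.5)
Your proof is correct and follows the paper's argument step by step. It uses the same pointwise bound $|z-\nabla d_F|^2\le 2(1-z\cdot\nabla d_F)$ integrated by coarea over the parallel sets $F_s$ and compared with $S$ through \eqref{parallel} and \eqref{zbound}, the same Chebyshev/Cauchy--Schwarz split at scale $hR$ for \eqref{asum}, and \eqref{gaussd} with $\nabla\varphi$ for \eqref{eq:transp-term-old}.

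The only divergence is the delicate step you flag. The paper sidesteps any trace identification by mollifying $z$: this keeps $|z_\varepsilon|\le 1$, lets classical Gauss--Green apply, and one then sends $\varepsilon\to0$. Your sketched mechanism is imprecise as written, since differentiating $s\mapsto\int_{F_s}\diver z$ gives $\int_{\partial F_s}\diver z\,d\mathcal H^n$ rather than the flux. If you keep your route, test $\diver z$ against the Lipschitz function $\Theta(d_F)$, where $\Theta(t)=\int_{-r}^{r}\chi_{\{t<\sigma\}}\,d\sigma$; this yields the needed integrated identity directly.
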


\begin{proof}
For $F_s=\{d_{F}<s\}$ and $|s|<a\sqrt h$, the graphical computations \eqref{parallel}, the  calibration bounds \eqref{zbound} and Gauss--Green imply
\begin{equation}\label{cali-err}
     0\le P(F_s)-\int_{F_s}\diver z
 \le S+C|s|P(F)/\sqrt h\,.
\end{equation}
Now consider a mollification $\rho_\varepsilon\ast z:=z_\varepsilon$. From $|z_\varepsilon-\nabla d_F|^2\le2(1-z_\varepsilon\cdot\nabla d_F)$, the   estimate \eqref{hessd} implying $d_F\in C^{1,1}(\{|d_F|<a\sqrt{h}\})$ and $\nu_{F_s}=\nabla d_F\llcorner \partial F_s$, by coarea 
\[
\frac12\int_{\{|d_F|<r\}}|z_\varepsilon -\nabla d_F|^2\le \int_{-r}^r \Big( P(F_s) - \int_{\partial F_s} z_\varepsilon\cdot \nabla d_F\Big) ds = \int_{-r}^r \Big( P(F_s) - \int_{ F_s}  \diver z_\varepsilon\Big) ds\,.
\]
We then deduce   \eqref{tubedefect} letting first $\varepsilon\to 0$ and then  using \eqref{cali-err}. 

Restore the notation $E^h_k=E,E^h_{k-1}=F$ and take  $Kh\le T$.
In the region $\{ |d_{E^h_{k-1}}|\ge hR\}$, we have by Chebyshev the obvious bound
\begin{equation}\label{eq:est-D0}
    \sum_{k=2}^{K}\int_{(E^h_k\triangle E^h_{k-1})\cap \{|d_{E^h_{k-1}}|\ge hR\}}
 |z^h_k-\nabla d_{E^h_{k-1}}|\le 2 \sum_{k=2}^{K} \frac{D(E^h_k;E^h_{k-1})}{hR}\le\frac{2M}R\,,
\end{equation}
where in the last inequality we used \eqref{sums}. Inside the complement region $\{ |d_{E^h_{k-1}}|\le hR\}$, set 
\[
A^h_k:= (E^h_k\triangle E^h_{k-1})
\cap\{|d_{E^h_{k-1}}|<hR\}\,.
\]
Apply Cauchy--Schwarz 
on the space $L^2(\chi_{A^h_k}d\#(k)dx  )$, where $d\#$ is the counting measure, to get 
\begin{equation}\label{eq:est-D1}
    \begin{split}
 \sum_{k=2}^K\int_{A^h_k}
       |z^h_k-\nabla d_{E^h_{k-1}}|         \le
 \left(\sum_k|E^h_k\triangle E^h_{k-1}|\right)^{1/2}
 \left( \sum_{k}\int_{A^h_k}
       |z^h_k-\nabla d_{E^h_{k-1}}|^2\right)^{1/2}\\  
 \le
 \left(\sum_k|E^h_k\triangle E^h_{k-1}|\right)^{1/2}
 \left(C hR\sum_k  S^h_k+C h^{3/2}R^2
       \sum_k   P(E^h_{k-1})\right)^{1/2}\,,
\end{split}
\end{equation}
where in the second step we used \eqref{tubedefect} for the choice $r=hR$. 
By the summability of $|E^h_k\triangle E^h_{k-1}|$ by  \eqref{volsum} and the one of $S^h_k$ by \eqref{Ssum}. We deduce, upon combining \eqref{eq:est-D0} and \eqref{eq:est-D1} that
\[
 \sum_{k=2}^K\int_{E^h_k \triangle E^h_{k-1}}
       |z^h_k-\nabla d_{E^h_{k-1}}|          
 \le
  C_T
 \left( R^{-1} +h^{1/2}R^{1/2} +  h^{3/4}R
       \Big(\sum_k   P(E^h_{k-1})\Big)^{1/2}\right)\,,
\]
so that 
\eqref{asum} follows since $\sum_{k=2}^K   P(E^h_{k-1})\le KM\le TM/h$.

Finally apply \eqref{gaussd} with $b$ replaced by $\nabla\varphi$ and weight  $1$. The term containing $d_{E^h_{k-1}}\Delta\varphi$ is bounded by $C D(E^h_{k},E^h_{k-1})$; replacing $\nabla d_{E^h_{k-1}}$ by $z^h_k$ produces an error bounded by  $C\int_{{E^h_{k}}\triangle {E^h_{k-1}}} |   z^h_k-\nabla d_{E^h_{k-1}}|$. This shows \eqref{eq:transp-term-old}.
\end{proof}

\begin{lemma}[Auxiliary flow computation] 
Let $G=\Phi_h(F)$.
For $h>0$ sufficiently small, it holds
\begin{align}
& P_\varphi(G)-P_\varphi(F)
 =
 h\delta P(F)[\varphi b]
 +h\int_{\partial F}(b\cdot\nu_F)\,
       \partial_{\nu_F}\varphi\,d\mathcal H^n
 +\mathcal E\,, \label{perdef}
 \\
 &\frac1hD_\varphi(G,F)
 \le
 \frac h2\int_{\partial F}
 \varphi|b\cdot\nu_F|^2\,d\mathcal H^n
 +Ch^{3/2}P(F)\,,                                      \label{distdef}
 \\
 &\left|
 \int(\chi_G-\chi_F)z\cdot\nabla\varphi
 -h\int_{\partial F}(b\cdot\nu_F)
       \partial_{\nu_F}\varphi\,d\mathcal H^n
 \right|
 \le
 C\left(
 \int_{G\triangle F}|z-\nabla d_F|
 +h^{3/2}P(F)
 \right)\,,                                            \label{eq:transp-term-new}
 \\
 &\sum_{k=2}^K
 \int_{\Phi_h(E^h_{k-1})\triangle E^h_{k-1}}
 |z^h_k-\nabla d_{E^h_{k-1}}|
 \le C_T\bigl(h^{1/2}+h^{1/4}\bigr)\,,             \label{eq:sums-calerr}
\end{align}
where the error term $\mathcal E$ satisfies $|\mathcal E|
 \le Ch^2P(F)$.
\end{lemma}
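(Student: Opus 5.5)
The plan is to expand the flow $\Phi_h$ to second order and compute each quantity either by first variation or in normal coordinates of Lemma~\ref{lem:tubecalc}. Since $b\in C^2_c$, we have $\Phi_h(x)=x+hb(x)+O(h^2)$ in $C^1$, so $\|\Phi_h-\mathrm{id}\|_{C^1}\le Ch$. Because $h\ll a\sqrt h$, the set $G$ lies in the tube $\{|d_F|<a\sqrt h\}$, and $\partial G$ is a normal graph over $\partial F$ with height $w(p)=h\,b(p)\cdot\nu_F(p)+O(h^2)$ and slope $O(h)$.

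For \eqref{perdef} we use the area formula on $\partial F$:
\[
P_\varphi(G)=\int_{\partial F}\varphi(\Phi_h)\,J^{\partial F}\Phi_h\,d\mathcal H^n ,\qquad J^{\partial F}\Phi_h=1+h\diver_\tau b+O(h^2).
\]
The $O(h^2)$ bound uses only $\|D\Phi_h-I-hDb\|\le Ch^2$, so no curvature enters here. Expanding $\varphi(\Phi_h)=\varphi+h\nabla\varphi\cdot b+O(h^2)$ gives first-order term $h\int_{\partial F}(\varphi\diver_\tau b+\nabla\varphi\cdot b)$. We split $\nabla\varphi\cdot b=\nabla_\tau\varphi\cdot b+(b\cdot\nu_F)\partial_{\nu_F}\varphi$ and note $\diver_\tau(\varphi b)=\varphi\diver_\tau b+\nabla_\tau\varphi\cdot b$. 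This yields exactly $h\delta P(F)[\varphi b]+h\int(b\cdot\nu_F)\partial_{\nu_F}\varphi$, with $|\mathcal E|\le Ch^2P(F)$.

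For \eqref{distdef} we write $D_\varphi(G,F)$ in normal coordinates $\Psi$. The symmetric difference lies between $\partial F$ and the graph of $w$, so
\[
D_\varphi(G,F)=\int_{\partial F}\int_0^{|w(p)|}\varphi(\Psi(p,s))\,s\,J_\Psi\,ds\,d\mathcal H^n .
\]
Freezing $\varphi$ at $p$ costs $O(h^3)$ per unit area, and by \eqref{jac} the Jacobian contributes a factor $1+O(h/\sqrt h)$. The main term is $\int\varphi\,w^2/2=\frac{h^2}{2}\int\varphi|b\cdot\nu_F|^2+O(h^3P(F))$. Dividing by $h$ gives the bound with error $Ch^{3/2}P(F)$, which is the dominant (Jacobian) contribution. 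The one point needing care is that the graph description is exact, i.e. that $\Phi_h(F)\triangle F$ coincides with the region between the two graphs. This follows from injectivity of $\Psi$ in the tube and the fact that $\Phi_h$ is a diffeomorphism close to the identity.

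For \eqref{eq:transp-term-new} we first replace $z$ by $\nabla d_F$, paying $C\int_{G\triangle F}|z-\nabla d_F|$. We then apply Gauss–Green to $\nabla\varphi\,d_F$ exactly as in \eqref{gaussd}, or compute directly in normal coordinates, using $\nabla d_F(\Psi(p,s))=\nu_F(p)$ and $\chi_G-\chi_F=-\operatorname{sgn}(s)$ on the region between the graphs:
\[
\int(\chi_F-\chi_G)\,\nabla d_F\cdot\nabla\varphi=\int_{\partial F}\int_0^{w}\partial_{\nu_F}\varphi(\Psi(p,s))\,J_\Psi\,ds .
\]
Up to errors $O(h^2)+O(h^{3/2})$ per unit area, this equals $\int_{\partial F}w\,\partial_{\nu}\varphi$. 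Inserting $w=hb\cdot\nu_F+O(h^2)$ gives the stated term. The sign must be checked against the orientation: $\nu_F$ is outward and $G\setminus F$ corresponds to $s>0$, so the correct sign is $\int(\chi_G-\chi_F)\ldots=+h\int(b\cdot\nu)\partial_\nu\varphi$.

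Finally, \eqref{eq:sums-calerr} follows the scheme of \eqref{asum}, but without the Chebyshev region, since $\Phi_h(E^h_{k-1})\triangle E^h_{k-1}\subseteq\{|d_{E^h_{k-1}}|<Ch\}$. By Cauchy–Schwarz and \eqref{tubedefect} with $r=Ch$, and using $|\Phi_h(F)\triangle F|\le ChP(F)$,
\[
\sum_k\int|z^h_k-\nabla d|\le\Bigl(\sum_k ChP(E^h_{k-1})\Bigr)^{1/2}\Bigl(\sum_k\bigl(ChS^h_k+Ch^{3/2}P(E^h_{k-1})\bigr)\Bigr)^{1/2}.
\]
With $\sum_k hP\le TM$ and $\sum_k S\le M$ from \eqref{Ssum}, the right side is at most $C_T\bigl(h^{1/2}+h^{1/4}\bigr)$. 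The main obstacle is bookkeeping: the error in the normal-graph description of $\partial G$ must be uniform, and this relies on \eqref{tubeest} for $k\ge2$.
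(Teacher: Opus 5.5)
Your plan follows the paper's proof step for step. It uses the area formula along $\Phi_h$ for \eqref{perdef}, where, as you note, only $\|D\Phi_h-I-hDb\|\le Ch^2$ is needed. It uses normal coordinates over $\partial F$ for \eqref{distdef} and \eqref{eq:transp-term-new}, and Cauchy--Schwarz with \eqref{tubedefect} at $r=Ch$ for \eqref{eq:sums-calerr}; your treatments of \eqref{perdef} and \eqref{eq:sums-calerr} are fine as written.

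\textbf{The height expansion is wrong.} You claim $\partial G$ is a normal graph with height $w=h\,b\cdot\nu_F+O(h^2)$ and slope $O(h)$. That holds when $\partial F$ has curvature of order one, but here you only have $|A_F|\le C/\sqrt h$. Expanding $d_F$ at $p$ gives
\[
d_F(\Phi_h(p))=h\,b(p)\cdot\nu_F(p)+O\bigl(h^2\|D^2d_F\|\bigr)=h\,b(p)\cdot\nu_F(p)+O(h^{3/2})
\]
by \eqref{hessd}. This number is the height at the base point $\theta_h(p)=\pi_F(\Phi_h(p))$, which lies at distance $O(h)$ from $p$. Converting it to $h\,b\cdot\nu_F$ evaluated at $\theta_h(p)$ costs another $h\cdot Ch\cdot\|\nabla_\tau(b\cdot\nu_F)\|_\infty\le Ch^{3/2}$. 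This is the paper's \eqref{heighterror}, and under \eqref{tubeest} alone the order cannot be improved. Translate a ball of radius $r\simeq\sqrt h$ by $hb$ with $b$ constant near the ball; then
\[
w=h\,b\cdot\nu_F-\frac{h^2}{2r}\bigl(|b|^2-(b\cdot\nu_F)^2\bigr)+\dots,
\]
and the slope of the graph is of order $h/r\simeq\sqrt h$, not $h$. The slope is still small, so transversality and the graph representation survive. The clean way to justify the graph representation is the paper's: show $\theta_h=\pi_F\circ\Phi_h|_{\partial F}$ is a diffeomorphism of $\partial F$.

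\textbf{Why the stated bounds still hold.} With the correct expansion, \eqref{distdef} gives $\frac12\int\varphi w^2=\frac{h^2}{2}\int\varphi(b\cdot\nu_F)^2+O(h^{5/2}P(F))$, rather than $O(h^3P(F))$. In \eqref{eq:transp-term-new}, inserting $w$ produces an extra $O(h^{3/2}P(F))$. Both errors are of the same order as the Jacobian errors from \eqref{jac} that you already tracked. So the curvature scale $\sqrt h$ enters through the height function itself, not only through $J_\Psi$, and your error bookkeeping should say so.

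\textbf{Sign in the flux step.} On the region between $\partial F$ and $\partial G$ one has $\chi_G-\chi_F=\operatorname{sgn}(s)$, not $-\operatorname{sgn}(s)$. Hence directly
\[
\int(\chi_G-\chi_F)\,\nabla d_F\cdot\nabla\varphi=\int_{\partial F}\int_0^{w(p)}\nabla\varphi(\Psi(p,s))\cdot\nu_F(p)\,J_\Psi(p,s)\,ds\,d\mathcal H^n(p),
\]
which is the sign you reach only after correcting yourself.
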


\begin{proof}
Expand the  area formula 
$P_\varphi(\Phi_h(F))=\int_{\partial F}\varphi(\Phi_h(p))J_\tau\Phi_h(p)$.
Its first variation is
\[
\int_{\partial^* F}(\nabla\varphi\cdot b+\varphi\diver_\tau b)
=\delta P(F)[\varphi b]+\int_{\partial^*F}(b\cdot\nu_F)\partial_\nu\varphi\,;
\]
its second derivative is bounded by $CP(F)$: all the constants  appearing depend on  $\varphi, b$. This shows \eqref{perdef}.

Let $\pi_F$ denote the nearest-point projection in the tubular
neighborhood and define
\[
 \theta_h:=\pi_F\circ(\Phi_h\llcorner\partial F)\,.
\]
The curvature estimates \eqref{tubeest}   and  $\Phi_h(p)=p+hb(p)+O(h^2)$  
imply that $\theta_h$ is a diffeomorphism of $\partial F$ for
sufficiently small $h$ (independent of $F$). Consequently, $\partial G$ is a normal
graph over $\partial F$ induced by the height function $w_h$ defined via
\[
 w_h(\theta_h(p))=d_F(\Phi_h(p)), \quad \text{for all }p\in\partial F\,.
\]
Using \eqref{hessd} and \eqref{tubeest}, Taylor's formula gives
\[
 d_F(\Phi_h(p))
 =
 h\,b(p)\cdot\nu_F(p)+O(h^{3/2}), \quad  
 \|\nabla_\tau (b\cdot \nu_F)\|_\infty
 \le \frac{C}{\sqrt h}\,.
\]
Since $|\theta_h(p)-p|\le Ch$, it follows that
\[
 h\big|b(\theta_h(p))\cdot \nu_F(\theta_h(p))-b(p)\cdot \nu_F(p)\big|\le Ch^{3/2}\,.
\]
Therefore it holds 
\begin{equation}\label{heighterror}
 \|w_h-hb\cdot \nu \|_{L^\infty(\partial F)}\le Ch^{3/2}\,.
\end{equation}
Normal coordinates give
\[
\begin{split}
     D_\varphi(G,F)
 &=
 \int_{\partial F}\int_0^{ w_h(p)}
 \varphi(\Psi(p,s))\,sJ_\Psi(p,s)\,ds\,d\mathcal H^n(p)
 \\ 
 &=
 \frac{1}{2}
 \int_{\partial F}\varphi w_h^2\,d\mathcal H^n
 +O(h^{5/2}P(F))\\
 &=
 \frac{h^2}{2}
 \int_{\partial F}\varphi (b\cdot\nu_F)^2\,d\mathcal H^n
 +O(h^{5/2}P(F))\,,
\end{split}
\]
where we used  the estimates of Lemma \ref{lem:tubecalc} and  \eqref{heighterror}. This  concludes  \eqref{distdef}.

In order to show \eqref{eq:transp-term-new},  use again normal coordinates on $G$ 
$\nabla d_F(p+s\nu_F(p))=\nu_F(p)$ to deduce  
\begin{align*}
 \int(\chi_{  G}-\chi_F)
       \nabla d_F\cdot\nabla\varphi 
       &=
 \int_{\partial F}\int_0^{w_h(p)}
 \partial_{\nu_F}\varphi(\Psi(p,s))
 J_\Psi(p,s)\,ds\,d\mathcal H^n(p)\\
 &=
 h\int_{\partial F}
 (b\cdot\nu_F)\partial_{\nu_F}\varphi\,d\mathcal H^n
 +O(h^{3/2}P(F))\,,
\end{align*}
where we used again the estimates of Lemma \ref{lem:tubecalc} and \eqref{heighterror}.

Finally, set $A^h_k:=\Phi_h(E^h_{k-1})\triangle E^h_{k-1}\subseteq \{|d_{E^h_{k-1}}|<Ch\}$.  Use  Cauchy–Schwarz 
on the space $L^2( \chi_{A^h_k} d\#(k) dx )$  to deduce  
\[
\sum_{k=2}^K\int_{A^h_k}|z^h_k-\nabla d_{E^h_{k-1}}|
 \le 
 \left(Ch\sum_{k=2}^KP(E^h_{k-1})\right)^{1/2} 
 \left(
 \sum_{k=2}^K
 \int_{\{|d_{E^h_{k-1}}|<Ch\}}
 |z^h_k-\nabla d_{E^h_{k-1}}|^2
 \right)^{1/2}\,.
\]
Applying \eqref{tubedefect} with $r=Ch$ and using the summability of $P(E^h_k),S^h_k$   proves \eqref{eq:sums-calerr}.
\end{proof} 

\subsection{Brakke's inequality by duality }
We first derive an approximate Brakke inequality holding at the discrete level 
and then derive a limiting one for varifold limits of the discrete flows. The crucial point is testing the inequality with a fixed vector field,  which will be  chosen only after passing to the limit $h\to 0$.

\begin{proposition}[Discrete Brakke's inequality] 
Fix $b\in C^2_c(\R^{n+1};\R^{n+1})$, $ \varphi\in C^2_c(\R^{n+1};\R_+)$. Then, for $h$ small enough  and  for  every  $k\ge2$  
\begin{equation}\label{dualineq}
\begin{split}
 P_\varphi(E^h_k)-P_\varphi(E^h_{k-1})\le h\Bigl[
 &\delta P(E^h_k)[\varphi b]+\delta P(E^h_{k-1})[\varphi b]
 -\delta P(E^h_k)[\nabla\varphi]\\
 &+\int_{\partial E^h_{k-1}}\varphi|b\cdot\nu_{E^h_{k-1}}|^2d\HH^n\Bigr]+\rho^h_k\,,
\end{split}
\end{equation}
where $\rho^h_k\ge0$ can be chosen so that, for every $T>0$, it holds 
\begin{equation}\label{rho}
 \begin{split}
 \rho^h_k\le C\Bigl(&D(E^h_k,E^h_{k-1})+h^{3/2}P(E^h_{k-1})\\
 &+\int_{E^h_k\triangle E^h_{k-1}}|z^h_k-\nabla d_{E^h_{k-1}}|
 +\int_{\Phi_h(E^h_{k-1})\triangle E^h_{k-1}}
       |z^h_k-\nabla d_{E^h_{k-1}}|\Bigr)\,,\\
 \sum_{k=2}^{ [T/h]} \rho^h_k&\le C_T h^{1/8}\,.
 \end{split}
\end{equation}
The estimates are uniform in $[0,T]$ for  tests $\varphi,b$ with uniformly bounded  $C^2$  norms.
\end{proposition}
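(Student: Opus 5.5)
The plan is to start from \eqref{eq:Brakke0}, obtained by applying the localized comparison inequality \eqref{local} with the competitor $G=\Phi_h(E^h_{k-1})$, and to estimate each term on its right-hand side using the three preceding lemmas. We write $E=E^h_k$, $F=E^h_{k-1}$ and $z=z^h_k$. We also assume $h$ is small enough that $\Phi_h(F)\subset B_{1/4}$ and that the normal coordinates of Lemma~\ref{lem:tubecalc} apply on $\{|d_F|<Ch\}$.

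\textbf{Step 1: estimating the terms in \eqref{eq:Brakke0}.}
\begin{itemize}
\item The term $-D_\varphi(E,F)/h$ is bounded by the dissipation bound \eqref{endpointineq}. This gives $h\delta P(E)[\varphi b]+\frac h2\int_{\partial F}\varphi|b\cdot\nu_F|^2$ plus an error $C(D(E,F)+h^{3/2}P(F))$.
\item The term $D_\varphi(G,F)/h$ is bounded by \eqref{distdef}. This gives the second half of the quadratic term, $\frac h2\int_{\partial F}\varphi|b\cdot\nu_F|^2$, plus an error $Ch^{3/2}P(F)$.
\item The term $P_\varphi(G)-P_\varphi(F)$ is given by \eqref{perdef}. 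It equals $h\delta P(F)[\varphi b]+h\int_{\partial F}(b\cdot\nu_F)\partial_{\nu_F}\varphi$, up to an error $O(h^2P(F))$.
\end{itemize}

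\textbf{Step 2: the transport term.} Split
\[
\int(\chi_E-\chi_G)z\cdot\nabla\varphi=\int(\chi_E-\chi_F)z\cdot\nabla\varphi-\int(\chi_G-\chi_F)z\cdot\nabla\varphi .
\]
By \eqref{eq:transp-term-old}, the first piece equals $-h\delta P(E)[\nabla\varphi]$ up to $C(\int_{E\triangle F}|z-\nabla d_F|+D(E,F))$. By \eqref{eq:transp-term-new}, the second piece equals $h\int_{\partial F}(b\cdot\nu_F)\partial_{\nu_F}\varphi$ up to $C(\int_{G\triangle F}|z-\nabla d_F|+h^{3/2}P(F))$. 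This normal-derivative term exactly cancels the one produced by \eqref{perdef}. Collecting all terms gives \eqref{dualineq}. The remainder $\rho^h_k$ is taken to be the sum of the errors, made nonnegative by replacing each error with its absolute value bound, and it satisfies the first line of \eqref{rho}.

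\textbf{Step 3: summation over $k$.} We sum the bound for $\rho^h_k$ over $2\le k\le[T/h]$.
\begin{itemize}
\item The dissipation terms contribute at most $hM$, by \eqref{sums}.
\item The terms $h^{3/2}P$ contribute at most $h^{3/2}(T/h)M=C_Th^{1/2}$.
\item The calibration error on $\Phi_h(F)\triangle F$ contributes $C_T(h^{1/2}+h^{1/4})$, by \eqref{eq:sums-calerr}.
\item The calibration error on $E\triangle F$ is controlled by \eqref{asum} with a free parameter $R$. The requirement $hR<a\sqrt h$ means $R<ah^{-1/2}$.
\end{itemize}

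\textbf{Step 4: choosing $R$ (the main obstacle).} The only delicate point is balancing the bound $C_T(R^{-1}+h^{1/2}R^{1/2}+h^{1/4}R)$ from \eqref{asum}. Choosing $R=h^{-1/8}$ gives $h^{1/8}+h^{7/16}+h^{1/8}\le 3h^{1/8}$ for $h\le 1$. This choice is admissible, since $h^{-1/8}<ah^{-1/2}$ for small $h$. Hence $\sum_k\rho^h_k\le C_Th^{1/8}$.

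\textbf{Uniformity.} All the constants in the lemmas used depend only on the $C^2$ norms of $\varphi$ and $b$, and on $M$ and $T$. The smallness threshold for $h$ depends only on $\|b\|_{C^2}$. This gives the claimed uniformity for tests with uniformly bounded $C^2$ norms.
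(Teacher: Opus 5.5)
Your proposal is correct and follows essentially the same route as the paper. You start from \eqref{eq:Brakke0} with $G=\Phi_h(E^h_{k-1})$ and bound its terms via \eqref{endpointineq}, \eqref{distdef}, \eqref{perdef}, \eqref{eq:transp-term-old} and \eqref{eq:transp-term-new}, so that the $\partial_{\nu_F}\varphi$ terms cancel and the two half quadratic terms combine; you then sum the remainders using \eqref{sums}, \eqref{eq:sums-calerr} and \eqref{asum} with the same choice $R=h^{-1/8}$.
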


\begin{proof}
Fix $k\ge2$ and write for simplicity 
\[
 E=E^h_k,\qquad F=E^h_{k-1},\qquad z=z^h_k,
 \qquad G=\Phi_h(F)\,.
\]
Applying \eqref{local} with   the competitor $G$ above  and then adding
$P_\varphi(G)-P_\varphi(F)$, we obtain
\begin{equation}
 P_\varphi(E)-P_\varphi(F)
 \le
 -\frac{D_\varphi(E,F)}h
 +\frac{D_\varphi(G,F)}h                                      +\int(\chi_E-\chi_G)z\cdot\nabla\varphi
 +P_\varphi(G)-P_\varphi(F)\,.                         \label{dual-step-1}
\end{equation}
By \eqref{eq:transp-term-old},  \eqref{eq:transp-term-new}  and \eqref{perdef} respectively, we obtain 
\begin{align}
\int(\chi_E-\chi_F)z\cdot\nabla\varphi
 &\le
 -h\delta P(E)(\nabla\varphi)
 +C\left(
 D(E,F)+
 \int_{E\triangle F}|z-\nabla d_F|
 \right)\,,   \label{dual-old-flux} 
 \\
 -\int(\chi_G-\chi_F)z\cdot\nabla\varphi
 &\le 
 -h\int_{\partial F}
 (b\cdot\nu_F)\partial_{\nu_F}\varphi\,d\mathcal H^n +C\left(
 \int_{G\triangle F}|z-\nabla d_F|
 +h^{3/2}P(F)
 \right)\,,  \label{dual-new-flux}
 \\
 P_\varphi(G)-P_\varphi(F)
 &=
 h\delta P(F)[\varphi b]
 +h\int_{\partial F}
 (b\cdot\nu_F)\partial_{\nu_F}\varphi\,d\mathcal H^n
 +\mathcal E\,,                       \label{dual-perimeter} 
\end{align}  
where $
 |\mathcal E|\le Ch^2P(F)$. 
Adding \eqref{dual-new-flux} and \eqref{dual-perimeter}, the two terms involving $\partial_\nu \varphi$
 cancel. Therefore
\begin{equation}
 -\int(\chi_G-\chi_F)z\cdot\nabla\varphi
 +P_\varphi(G)-P_\varphi(F)                       \le
 h\delta P(F)[\varphi b]
 +C\left(
 \int_{G\triangle F}|z-\nabla d_F|
 +h^{3/2}P(F)
 \right)\,.                                          \label{dual-cancellation}
\end{equation}
On the other hand, the estimates  \eqref{endpointineq} and \eqref{distdef}   yields respectively 
\begin{align}
 -\frac{D_\varphi(E,F)}h
 &\le
 h\delta P(E)[\varphi b]
 +\frac h2\int_{\partial F}
 \varphi|b\cdot\nu_F|^2\,d\mathcal H^n
  +C\left(D(E,F)+h^{3/2}P(F)\right)\,,               \label{dual-endpoint}\\
 \frac{D_\varphi(G,F)}h
 &\le
 \frac h2\int_{\partial F}
 \varphi|b\cdot\nu_F|^2\,d\mathcal H^n
 +Ch^{3/2}P(F)\,.     \label{dual-competitor-distance}
\end{align}
Thus the two quadratic contributions in
\eqref{dual-endpoint} and \eqref{dual-competitor-distance}, each
having coefficient $1/2$, add to
\[
 h\int_{\partial F}\varphi|b\cdot\nu_F|^2\,d\mathcal H^n\,.
\]
Substituting \eqref{dual-old-flux},
\eqref{dual-cancellation}, \eqref{dual-endpoint} and
\eqref{dual-competitor-distance} into \eqref{dual-step-1} gives
\begin{equation*}
 P_\varphi(E)-P_\varphi(F)
 \le h\Big( \delta P(E)[\varphi b]+\delta P(F)[\varphi b]
 -\delta P(E)[\nabla\varphi]
 +\int_{\partial F}\varphi|b\cdot\nu_F|^2\,d\mathcal H^n
 \Big)+\rho^h_k\,,
\end{equation*}
where $\rho^h_k\ge0$ may be chosen so that
\begin{align*}
 \rho^h_k\le C\Bigg(
 &D(E^h_k,E^h_{k-1})+h^{3/2}P(E^h_{k-1}) +\int_{E^h_k\triangle E^h_{k-1}}
 |z^h_k-\nabla d_{E^h_{k-1}}|\\
 &+\int_{\Phi_h(E^h_{k-1})\triangle E^h_{k-1}}
 |z^h_k-\nabla d_{E^h_{k-1}}|
 \Bigg)\,.
\end{align*}
This proves  \eqref{dualineq}. 
It remains to prove the summability of $\rho^h_k$. By \eqref{sums} and $Kh\le T$,
\[
 \sum_{k=2}^K D(E^h_k,E^h_{k-1})\le hM,
 \qquad
 h^{3/2}\sum_{k=2}^K P(E^h_{k-1})
 \le TMh^{1/2}\,.
\]
Choose $R=h^{-1/8}$. For sufficiently small $h$, $  R \ge 1 $ and  $hR=h^{7/8}<a\sqrt h $. Therefore, by \eqref{asum} we get 
\[
 \sum_{k=2}^K
 \int_{E^h_k\triangle E^h_{k-1}}
 |z^h_k-\nabla d_{E^h_{k-1}}|
 \le
 C_T\left(h^{1/8}+h^{7/16}+h^{1/8}\right)
 \le C_T h^{1/8}\,.
\]
Finally, \eqref{eq:sums-calerr} gives
\[
 \sum_{k=2}^K
 \int_{\Phi_h(E^h_{k-1})\triangle E^h_{k-1}}
 |z^h_k-\nabla d_{E^h_{k-1}}|
 \le C_T(h^{1/2}+h^{1/4})\,.
\]
Combining these estimates and enlarging $C_T$, we conclude \eqref{rho}.
\end{proof}

Before showing Brakke's inequality, we introduce the following useful notation. Given a set of finite perimeter $E$ we denote the multiplicity one varifold associated with its reduced boundary as 
\[
V_E := |D\chi_E|\otimes \delta_{T_x\partial^* E}\,.
\]
We also use the standard notation $\|V\|$ to denote the weight measure associated to the varifold $V$. Instead, $\llbracket E \rrbracket$ will denote the multiplicity one current 
\[
\llbracket E \rrbracket = \llbracket E, e_1 \wedge\ldots\wedge e_{n+1},1 \rrbracket\,,
\]
so that 
\[
\partial\llbracket E \rrbracket = \llbracket \partial^* E, \star\nu_E , 1 \rrbracket \quad \text{and}\quad \| \partial \llbracket E \rrbracket \| = \|V_E\|=|D\chi_E|\,.
\]
Finally, following White \cite{Whi09} we denote by $[V]$ the mod(2) flat chain associated with the integral varifold $V$.

The first step of the proof is rather standard and relies on the uniform $L^2$- bound on the mean curvature holding for the discrete evolution \eqref{sums}. We refer for instance to \cite[Section 3]{Ton}.

\begin{theorem}
\label{thm:brakke}
Assume $1\le n\le6$. Let  $E_0\subset\mathbb R^{n+1}$ be a bounded set 
with finite perimeter with $|\partial E_0|=0$, and let
$E_t$ be a flat flow obtained as $L^1$ limit of $E^{h_j}_t$ as $h_j\to 0$.

Then, after passing to a further subsequence, there exists a measurable
family of  $n$-varifolds $\{V_t\}_{t\ge0}$ which are integral for a.e. $t\ge 0$ and   such that
\[
\begin{split}
  V_{E_t^{h_j}}dt
 &\rightharpoonup V_tdt\quad \text{as varifolds in space-time}\,, \\
 \|V_{E_t^{h_j}}\| &\rightharpoonup \|V_t\|  \quad \text{for every $t \geq 0$, as measures in space}\,,\\
 \{V_t\}_{t \geq 0} &\mbox{ is a Brakke flow with } V_0=V_{E_0}\,.
 \end{split}
\]
Moreover, we have:
\begin{itemize}
\item[(i)] \textit{Continuity at $t=0$}: as $t \to 0^+$
\begin{equation} \label{init-cont}
\begin{split}
    P(E_t) &\to P(E_0)\,,\\ 
    \|V_t\|(\R^{n+1}) &\to P(E_0)\,, \\
    \|V_t\| &\rightharpoonup |D\chi_{E_0}| \mbox{ as Radon measures}\,,\\
\|\|V_t\| - |D\chi_{E_t}| \|_{\mathrm{TV}} &\to 0\,.
    \end{split}
\end{equation}
    \item[(ii)]  \textit{Domination}: 
    \begin{equation}\label{domin}
    \|V_{E_t}\|=|D\chi_{E_t}|\le  \|V_t\|
 \quad\text{for every }t>0\,,
 \end{equation}
\item[(iii)] \textit{$\mathrm{mod}(2)$ compatibility}:
\begin{equation} \label{mod2-equiv}
    [V_t] = \partial\llbracket E_t\rrbracket \quad \mathrm{mod}(2) \qquad \text{for a.e. $ t > 0$}\,.
\end{equation}
\end{itemize}
Finally, the dissipation inequality holds:
\begin{equation} \label{eq:dissipation}
    \|V_t\|(\R^{n+1}) + \int_0^t \int |\mathbf H_\tau|^2 \, d\|V_\tau\|\,d\tau \leq  P(E_0) \qquad \text{for every $t \geq 0$}\,.
\end{equation}
\end{theorem}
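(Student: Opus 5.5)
The proof has three stages: compactness, passage to the limit in the discrete Brakke inequality \eqref{dualineq}, and optimization over $b$. The structural properties (i)--(iii) are handled afterwards.

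\emph{Compactness.} By \eqref{sums} we have $\sup_k P(E^h_k)\le M$ and $\sum_k h\int|H_{E^h_k}|^2\le CM$. Following \cite[Section 3]{Ton}, we use \eqref{dualineq} with $b=0$ and $\varphi\in C^2_c$ fixed. After summation in $k$, this gives that $t\mapsto P_\varphi(E^h_t)-C_\varphi t$ is nonincreasing up to errors $\sum\rho^h_k\le C_Th^{1/8}$. The term $h\,\delta P(E^h_k)[\nabla\varphi]$ is controlled by Cauchy--Schwarz, using the $L^2$ curvature bound. A Helly-type selection over a countable dense family of $\varphi$, together with a diagonal argument, then yields $\|V_{E^{h_j}_t}\|\rightharpoonup\mu_t$ for every $t$. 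Separately, Allard compactness applied to the space-time measures gives $V_{E^{h_j}_t}dt\rightharpoonup V_tdt$. Fatou's lemma and the $L^2$ bound give, for a.e. $t$, a subsequence with bounded first variation; along it Allard's theorem shows that $V_t$ is integral with $\|V_t\|=\mu_t$. The first variations converge: $\delta P(E^{h_j}_t)[X]\to -\int \mathbf H_t\cdot X\,d\|V_t\|$ after integrating in $t$, and $\int_0^T\int|\mathbf H_t|^2\,d\|V_t\|\,dt\le \liminf$ of the discrete sums.

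\emph{Limit of \eqref{dualineq}.} Sum \eqref{dualineq} over $k$ from $[s/h]+1$ to $[t/h]$; the time-dependent $\phi(x,\tau)$ adds a discrete time-derivative term. The two first-variation terms, evaluated at $E^h_k$ and at $E^h_{k-1}$, both converge to $-\int_s^t\int \mathbf H_\tau\cdot\varphi b\,d\|V_\tau\|$, because they differ only by an index shift. The quadratic term $\int\varphi|b\cdot\nu|^2$ is a continuous varifold integrand, so it converges to $\int\varphi|b^\perp|^2\,d\|V_\tau\|$, where $b^\perp$ is the projection of $b$ onto the normal plane of $V_\tau$; it is here that we need varifold (not merely measure) convergence. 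Since $\mathbf H_\tau$ is normal a.e., we obtain
\[
\int\varphi\,d\|V_t\|-\int\varphi\,d\|V_s\|\le\int_s^t\!\!\int\Big(\partial_\tau\varphi+\nabla\varphi\cdot\mathbf H_\tau+\varphi\big(|b^\perp-\mathbf H_\tau|^2-|\mathbf H_\tau|^2\big)\Big)d\|V_\tau\|\,d\tau .
\]
We then choose time-dependent $b$; the same estimates hold because the constants depend only on $C^2$ norms. We approximate $\mathbf H_\tau$ in $L^2(\varphi\,d\|V_\tau\|d\tau)$ by smooth compactly supported fields $b$, which is possible since $C^2_c$ is dense in $L^2$ of the Radon measure $\varphi\,\|V_\tau\|d\tau$. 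Then $b^\perp\to\mathbf H_\tau^\perp=\mathbf H_\tau$ in $L^2$, which yields \eqref{eq:def-Brakke}. The main obstacle is this step, because the passage to the limit must be carried out for $b$ depending on $\tau$ and must be uniform. I would first treat $b$ piecewise constant in time, where the estimates of the Proposition apply verbatim on each interval, and then pass to the density argument.

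\emph{Properties (i)--(iii) and dissipation.} \emph{Domination} follows from lower semicontinuity of total variation under $L^1$ convergence: $|D\chi_{E_t}|\le\lim\|V_{E^{h_j}_t}\|=\|V_t\|$. \emph{Dissipation:} letting $\varphi\uparrow 1$ in the Brakke inequality (or directly from \eqref{sums} with lower semicontinuity) gives \eqref{eq:dissipation}. \emph{Continuity at $t=0$:} \eqref{eq:dissipation} gives $\limsup_{t\to0}\|V_t\|(\R^{n+1})\le P(E_0)$. On the other hand, $E_t\to E_0$ in $L^1$ by \eqref{eq:12Holder} and Remark \ref{rmk:c0}, so $\liminf P(E_t)\ge P(E_0)$. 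Combining with domination, the masses converge, $P(E_t)\to P(E_0)$, and $\|V_t\|-|D\chi_{E_t}|\ge0$ has total mass tending to $0$. Weak convergence $|D\chi_{E_t}|\rightharpoonup|D\chi_{E_0}|$ then follows from convergence of perimeters under $L^1$ convergence, and this also identifies $V_0=V_{E_0}$. \emph{Mod(2) compatibility:} for a.e. $t$, along the subsequence with bounded first variation, the currents $\partial\llbracket E^{h_j}_t\rrbracket$ converge flat to $\partial\llbracket E_t\rrbracket$. White's theorem \cite{Whi09} states that, for integral varifolds converging with bounded first variation, the associated mod(2) flat chains converge, so $[V_t]=\partial\llbracket E_t\rrbracket$ mod $2$.
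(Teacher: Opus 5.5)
Your overall architecture is the paper's. The shared steps are: Helly selection in time for the weights; weak-$*$ compactness of the space-time varifolds; Allard compactness along $t$-dependent subsequences for integrality; summation of \eqref{dualineq}, with the quadratic term passing to the limit only thanks to varifold convergence; completion of the square and density of smooth $b$; lower semicontinuity for domination; and \cite{Whi09} for the mod$(2)$ statement. Your piecewise-constant-in-time $b$ and your strict-convergence argument for $\|V_t\|\rightharpoonup|D\chi_{E_0}|$ are harmless variants.

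\textbf{The gap is at the initial time.} Inequality \eqref{dualineq} holds only for $k\ge2$, because $E_0$ need not satisfy \eqref{tubeest} and there are no normal coordinates around $F=E_0$. Summing it therefore gives Brakke's inequality only for $s>0$. But Definition~\ref{def:Brakke} requires $0\le s$, and your derivation of \eqref{eq:dissipation} by letting $\varphi\uparrow1$ needs $s=0$. Your continuity argument at $t=0$ in turn invokes \eqref{eq:dissipation}, so as written the chain is circular.

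The paper closes this by treating the first step separately. Applying \eqref{local} with $k=1$ and $G=E_0$ gives $P_{\varphi_h}(E^h_1)-P_{\varphi_0}(E_0)\le\|\nabla\varphi_h\|_\infty|E^h_1\triangle E_0|+P(E_0)\|\varphi_h-\varphi_0\|_\infty\to0$ by Remark~\ref{rmk:c0}. This is exactly where $|\partial E_0|=0$ enters. Within your own scheme you can break the circle differently. In the continuity argument, use only the trivial bound $\|V_t\|(\R^{n+1})\le\liminf_jP(E^{h_j}_t)\le P(E_0)$. Then let $s\downarrow0$ in Brakke's inequality using $\|V_s\|\rightharpoonup|D\chi_{E_0}|$.

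Your alternative ``directly from \eqref{sums} with lower semicontinuity'' does not give \eqref{eq:dissipation}. The discrete bound $h\int|H_{E^h_k}|^2\le\frac ChD(E^h_k,E^h_{k-1})$ has a non-sharp constant. Lower semicontinuity therefore only yields the inequality with a factor $1/C$ in front of the curvature integral. The sharp coefficient $1$ comes only from Brakke's inequality, i.e.\ from the duality argument.

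\textbf{Two smaller points.} First, with $b=0$, \eqref{dualineq} has no $-\varphi|H|^2$ term to absorb $h\,\delta P(E^h_k)[\nabla\varphi]$. So you do not get monotonicity of $P_\varphi(E^h_t)-C_\varphi t$. Cauchy--Schwarz gives monotonicity only after also subtracting $\frac12\sum_{2\le k\le t/h}h\|H_{E^h_k}\|^2_{L^2(\partial E^h_k)}$ and the errors. That quantity is still uniformly bounded on $[0,T]$ by \eqref{sums}, so the BV bound and Helly survive. The paper uses a simpler route: \eqref{local} with $G=E^h_{k-1}$ gives $(P_\varphi(E^h_k)-P_\varphi(E^h_{k-1}))_+\le\|\nabla\varphi\|_\infty|E^h_k\triangle E^h_{k-1}|$, which it sums via \eqref{volsum}.

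Second, ``along it Allard's theorem shows that $V_t$ is integral'' conflates two different objects: the $t$-dependent Allard limit $W_t$ and the disintegration $V_t$ of the space-time limit. You must first show $\|V_t\|=\mu_t$ for a.e.\ $t$, by comparing space-time and every-time weight convergence. Then show $V_t=W_t$: $V_t$ has bounded first variation and its weight has positive integer density a.e., so it is rectifiable by Allard's rectifiability theorem and hence determined by its weight. This identification is what makes the limit of your quadratic term equal $\int\varphi|P^\perp_{T_xV_\tau}b|^2\,d\|V_\tau\|$, with $\mathbf H_\tau$ perpendicular to the same planes. It is also what lets you apply White's theorem to $V_t$ itself in the mod$(2)$ step. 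Finally, redefine $V_t$ on the exceptional null set of times so that $\|V_t\|=\mu_t$ for every $t$, as the every-time convergence and domination require.
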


\begin{remark}
    We observe explicitly that \eqref{mod2-equiv} is equivalent to the fact that for a.e. $t > 0$ there exists an integral varifold $Z_t$ so that 
    \begin{equation} \label{mod2-equiv-2}
        V_t = V_{E_t} + 2Z_t\,.
    \end{equation}
    In particular, setting $V_t = \mathbf{var}(M_t,\theta_t)$, we have that
    \begin{equation} \label{eq:parity}
        \theta_t (x) \begin{cases}
            \mbox{is an odd integer} & \mathcal H^n\mbox{-a.e. } x \in \partial^*E_t  \\
            \mbox{is an even integer} & \mathcal H^n\mbox{-a.e. } x \in M_t \setminus \partial^*E_t \,. \\
        \end{cases}
    \end{equation}
\end{remark}

\begin{proof}
Fix a sequence $h_j\to0$ realizing the flat flow. Throughout the proof, in
order to lighten the notation, we suppress the index $j$ and write simply $h$
in place of $h_j$; in particular, every limit as $h\to0$ is understood along
this sequence. We write
\[
 \mu_t^h:=\|V_{E_t^h}\|=|D\chi_{E^h_t}|,
 \qquad
 V_t^h:=V_{E_t^h},
 \qquad
 M:=P(E_0)\,.
\]
For notational  simplicity extend $E_{t}^{h}\equiv E_0$ for $t<0$ and the definitions of $\mu^h_t,V^h_t$ accordingly. We also set $\mathbf H_t^h := \mathbf H_{E^h_t}$ for $t \geq h$, and, in order to enforce the initial condition, we define $\mathbf H^h_t:=0$ for $t\in [0,h)$.
Given a time-dependent map $f(x,t)$ we use the short-hand notation $f_t$ to denote $f(\cdot,t)$.

\smallskip

\noindent\textbf{Step 1:} \textit{Varifold limit and integrality}. 
For $k\ge 1$, minimality and the basic estimates \eqref{sums} give
\[
 P(E^{h}_{kh})+\frac1hD(E^{h}_{kh},E^{h}_{(k-1)h})\le P(E^{h}_{(k-1)h}),
 \qquad
 \sup_t\mu_t^h(\mathbb R^{n+1})\le M\,.
\]
Monotonicity of the perimeters along discrete flows is thus obvious. We show an estimate on the weighted perimeter $P_\varphi(E^h_{kh})$. 
For every nonnegative spatial test function  $\varphi\in C_c^2(\R^{n+1};\R_+)$, by \eqref{local} with $G=E^h_{(k-1)h}$ we get 
\[
P_\varphi(E^{h}_{kh})-P_\varphi(E^{h}_{(k-1)h}) \le -\frac{D_\varphi(E^{h}_{kh},E^{h}_{(k-1)h})}{h} + \int (\chi_{E^{h}_{kh}}-\chi_{E^{h}_{(k-1)h}}) z\cdot \nabla\varphi\,.
\]
Since $D_\varphi\ge 0$, we deduce  
\[
\left( P_\varphi(E^{h}_{kh})-P_\varphi(E^{h}_{(k-1)h}) \right)_ +\le  \|\nabla\varphi\|_\infty |E^{h}_{kh}\triangle E^{h}_{(k-1)h}|\,.
\]
Summing this estimate and using \eqref{volsum} we deduce 
\[
\sum_{k\le [T/h]} ( P_\varphi(E^{h}_{kh})-P_\varphi(E^{h}_{(k-1)h}))_+ \le \|\nabla\varphi\|_\infty\left( C(T+1)M  + |E^h_h\triangle E_0| \right) \,.
\]
To bound the negative increments note that 
\[
P_\varphi(E^{h}_T)-P_\varphi(E_0)= \sum_{k\le [T/h]} ( P_\varphi(E^{h}_{kh})-P_\varphi(E^{h}_{(k-1)h}))_+ - \sum_{k\le [T/h]} ( P_\varphi(E^{h}_{kh})-P_\varphi(E^{h}_{(k-1)h}))_-\,,
\]
and since 
$0\le P_\varphi(E^{h}_{kh})\le M\|\varphi\|_\infty$, also  the sum of the negative  increments of $P_\varphi$ is uniformly bounded. In particular, $t\mapsto P_\varphi(E^{h}_t)$ is uniformly bounded in $BV([0,T])$  for every fixed $T>0$. The same estimate holds for every non-positive test function $\varphi\le 0$.
For a signed test function $\psi\in C^1_c$, we can consider $  \eta\in C^1_c(\R^{n+1};\R_+)$ and $\eta\ge 1$ on $\spt(\psi)$ so that 
\[
\psi=(\psi+C\eta)-C\eta\,,
\]
where $C>\|\psi\|_\infty$, and then apply the previous estimate to the two signed functions on the \textit{rhs} above.
Choose then a countable family $\mathcal D$ of functions in $C^1_c(\R^{n+1})$ that are dense in $C^0_c(\R^{n+1})$. 
Helly's selection theorem,
applied to every element of $\mathcal D$ and every interval $[0,\ell]$ for $\ell\in\N$, given one subsequence such that  $P_\psi(E^{h}_t)\to L_t(\psi)$ for every $t\ge 0$ and $\psi\in \mathcal D$.
Since $  P_f(E^h_t)\le M\|f\|_{\infty} $ holds for every  $f \in C^{0}$, by density  $L_t$ extends uniquely to a positive bounded functional on $C^0_c(\R^{n+1})$.
By Riesz representation theorem,  it  is represented by a Radon measure $\mu_t$. We conclude that 
\begin{equation}\label{eq:conv-weight1}
    \mu^h_t\rightharpoonup \mu_t,\quad \text{for every }t\ge 0\,. 
\end{equation}
Note that at $t=0$ it holds $\mu_0=|D\chi_{E_0}|$ by the very definition of the discrete scheme.

Along the subsequence selected above,   measure compactness in space-time gives a common
subsequence such that
 \[
 V_t^hdt \rightharpoonup V_t dt\,.
\]
Space-time convergence of $\|V^h_t\|dt=\mu^h_tdt\rightharpoonup \|V_t\|dt $  and dominated convergence shows that 
\begin{equation}\label{eq:conv-weight2}
     \|V_t\|=\mu_t
 \quad\text{for a.e. }t\,.
\end{equation}
Indeed, for every $f\in C^0_c((0,T)),\varphi\in C^0_c(\R^{n+1})$ it holds 
\[
\int f(t) \varphi(x) d\|V_t\|(x)dt = \lim_{h\to 0}
\int f \varphi\, d\|V^h_t\|dt = \lim_{h\to 0}
\int f\int \varphi \,d\mu^h_tdt =
\int f\int \varphi \,d\mu_t dt   \,.
\]
We need to show that the varifolds $V_t$ are integral with curvatures  in $L^2$, for a.e. $t\ge 0$.
Set
\[
 e_h(t):=\int|\mathbf H_t^{h}|^2\,d\mu_t^h,\qquad t\ge 0\,.
\]
By \eqref{sums}, $e_h\in L^1_{loc}([0,+\infty))$ and $\int_0^T e_h(t)\,dt\le CM$. 
For every $b\in C^1_c(\R^{n+1}\times[0,T);\R^{n+1})$  it holds 
\begin{equation}\label{eq:fvar-curv}
     \int_0^T\delta V_t^h(b)\,dt
 =
 -\int_0^T\int\mathbf H_t^{h}\cdot b\,d\mu_t^h\,dt + \int_0^h\delta V_t^h(b)\,dt\,,
\end{equation}
where we recall that $\mathbf H^h_\tau:=0$  for $\tau\in [0,h)$. The last integral in \eqref{eq:fvar-curv}  vanishes as $h\to0$. Indeed, by the first variation formula, for every $\tau\in [0,h)$ it holds $ \delta V_\tau ^h(b(\cdot,\tau))=\int_{\partial^* E_0} \diver_\tau b(\cdot,\tau)$. This term is uniformly bounded, thus integrating and sending $h\to 0$ yields the previous claim.

Since $  V^h_tdt \rightharpoonup   V_tdt $ we can pass to the limit on the \textit{lhs} of \eqref{eq:fvar-curv}. Applying Cauchy--Schwarz
on the right gives
\[
 \left|\int_0^T\delta V_t(b)\,dt\right|
 \le
 \left(\liminf_{h\to0}\int_0^T e_h(t)\,dt\right)^{1/2}
 \left(\int_0^T\int|b|^2\,d\mu_t\,dt\right)^{1/2}\,.
\]
Riesz representation  and disintegration therefore give $\mathbf H\in L^2(d\mu_t dt)$ such that 
\[
 \delta V_t=-\mathbf H_t\mu_t
 \quad\text{for a.e. }t,
 \qquad
 \int_0^T\int|\mathbf H_t|^2\,d\mu_t\,dt\le CM\,.
\]
Therefore, for a.e.  $t$, the varifold  $V_t$ has locally bounded
first variation. 

To  show that $V_t$ is integral for a.e. $t$, we first note that by Fatou's lemma $\liminf_{h\to0} e_h(t)<\infty$ for
a.e. $t$. For each such $t$, choose a subsequence (depending on $t$)
with bounded $e_h(t)$. Since 
\begin{equation}\label{eq:hp-Allard}
    \|\delta V_t^h\|(\mathbb R^{n+1})\le  M^{1/2}e_h(t)^{1/2}\,,
\end{equation}
which is bounded along the subsequence,  
Allard's integral compactness theorem \cite{Allard} provides a further subsequence and an integral varifolds  $W_t$ with $V^h_t\rightharpoonup W_t$. By the convergence \eqref{eq:conv-weight2} it holds $\|W_t\|=\|V_t\|$ for a.e. $t$.  Since  the varifold $V_t$ has bounded first variation  and its weight $\|W_t\|$ has positive
integer density almost everywhere, Allard's rectifiability
theorem implies that $V_t$ is rectifiable.
Since a rectifiable varifold is determined by its weight,
\begin{equation}\label{eq:V=W}
 V_t=W_t
 \quad\text{for a.e.  }t\,.   
\end{equation}
Thus the spacetime limit itself is integral for a.e. $t$  and has no singular
first variation. Its mean curvature $\mathbf H_t$ is normal
to its approximate tangent plane by Brakke's perpendicularity theorem \cite[Theorem 5.8]{Bra}.

\smallskip

\noindent\textbf{Step 2:} \textit{$\mathrm{mod}(2)$ compatibility}.
Consider the full measure set of times  $t$ as in Step 1 such that \eqref{eq:hp-Allard} holds. Let $T_t^h := \llbracket E_t^h \rrbracket$, so that, as observed above, $\|V_t^h\| = \| \partial T_t^h \|$. Notice also that $[V_t^h] = \partial T_t^h \quad \mathrm{mod}(2)$, namely $V_t^h$ and $\partial T_t^h$ are compatible, in the language of \cite{Whi09}. Let also $T_t := \llbracket E_t \rrbracket$, and let $\mathcal F(T_1,T_2)$ denote the integral flat distance between flat chains $T_1, T_2$. Since
\[
\mathcal F(T_t^h, T_t) \leq |E_t^h \triangle E_t|\,,
\]
as a consequence of \eqref{flat-convergence} we have that along a suitable subsequence $h_j$ (depending on $t$) we have, as $j \to \infty$ that both hold:
\[
\begin{split}
   & V_t^{h_j} \mbox{ satisfy the assumptions of Allard's compactness, and } V_t^{h_j} \rightharpoonup V_t\,, \\
    & \mathcal F(\partial T_t^{h_j}, \partial T_t) \to 0 \,.
\end{split}
\]
By \cite[Theorem 1.2]{Whi09}, $V_t$ and $\partial T_t$ are compatible, that is \eqref{mod2-equiv} holds. 

\smallskip

\noindent\textbf{Step 3:} \textit{Brakke's inequality}.
Fix tests in space-time $  \varphi\in C_c^2(\mathbb R^{n+1}\times[0,\infty);\R_+)$ and  $b\in C_c^2(\mathbb R^{n+1}\times[0,\infty);
          \mathbb R^{n+1})$. 
Let $0<s<t$ and take  $h$ sufficiently small so that  $2\le [s/h]<[t/h]$.  
The convergence established in Step~1 and 
  the 
uniform mass bound imply 
\[
\mu_{s}^h(\varphi_{[s/h]h})\to \mu_s(\varphi_s), \quad \mu_{t}^h(\varphi_{[t/h]h})\to \mu_t(\varphi_t)\,.
\]
Sum \eqref{dualineq} for $k=[s/h]+1,\ldots, [t/h] $, using
the spatial tests $\varphi_{kh}$ and $b_{kh}$, and add the
temporal increments
\[
P_{\varphi_{kh}}(E^h_{k-1})
-
P_{\varphi_{(k-1)h}}(E^h_{k-1})\,.
\]
The left-hand side is a telescopic sum, equal to 
\[
\mu_{[t/h]h}^h(\varphi_{[t/h]h})
-
\mu_{[s/h]h}^h(\varphi_{[s/h]h})=\mu_{t}^h(\varphi_{[t/h]h})
-
\mu_{s}^h(\varphi_{[s/h]h})\,,
\]
while the temporal increments satisfy 
\[ 
\sum_{k=[s/h]+1}^{[t/h]}
\left[
P_{\varphi_{kh}}(E^h_{k-1})
-
P_{\varphi_{(k-1)h}}(E^h_{k-1})
\right]
=
\int_{[s/h]h}^{[t/h]h}
\int \partial_\tau\varphi\,d\mu_\tau^h\,d\tau
\underset{h\to 0}{\longrightarrow}
\int_s^t
\int \partial_\tau\varphi\,d\mu_\tau\,d\tau\,,
\]
where $\partial_\tau$ denotes derivative in the time variable. 
For fixed smooth tests, the  time integrals of the expressions in  the \textit{rhs} of 
\eqref{dualineq} is continuous with respect to  varifold convergence in space-time. Note also that the   uniform mass bound implies  
\[
\lim_{h\to 0} \int_s^t \delta V^h_\tau(\varphi_\tau b_\tau) \,d\tau=\lim_{h\to 0}  \int_s^t \delta V^h_{\tau+h}(\varphi_\tau b_\tau)\,d\tau  =  \int_s^t\delta V_\tau(\varphi_\tau b_\tau)\,d\tau\,.
\]
Using again the   uniform mass bound and the error estimate \eqref{rho}, we can pass to the limit for every $0<s<t$
\begin{equation}\label{eq:dual-limit}
\mu_t(\varphi_t)-\mu_s(\varphi_s)
\le \int_s^t \Bigg[
\int \partial_\tau\varphi\,d\mu_\tau
+2\delta V_\tau(\varphi_\tau b_\tau)
-\delta V_\tau(\nabla\varphi_\tau)
+\int \varphi_\tau
  |P_{T_xV_\tau}^{\perp}b_\tau|^2\,d\mu_\tau
\Bigg]\,d\tau\,.
\end{equation}
The equation \eqref{eq:dual-limit} also holds for $s=0$. Indeed, apply \eqref{local} at $k=1$ with $G=E_0$ and spatial test $\varphi_h$ to get 
\[
\mu_h^h(\varphi_h)-\mu_0(\varphi_0)
\le
\|\nabla\varphi_h\|_\infty
|E_h^h\triangle E_0| + M \|\varphi_h-\varphi_0\|_\infty\,.
\]
On the other hand, $|E_h^h\triangle E_0| = |E^h_1 \triangle E_0|\to0$ as $h\to 0$ as a consequence of the assumption $|\partial E_0|=0$ by the argument in Remark \ref{rmk:c0}, and, of course, $\|\varphi_h-\varphi_0\|_\infty \to 0$: hence, this time increment vanishes and the discussion above can be repeated to include $s=0$. 

By Step~1, $\delta V_\tau=-\mathbf H_\tau\mu_\tau$ and
$\mathbf H_\tau$ is normal to $T_xV_\tau$ for almost every
$\tau$. Hence the right-hand side of \eqref{eq:dual-limit}
equals
\[
\int_s^t\int
\left(
\partial_\tau\varphi
+\nabla\varphi\cdot\mathbf H_\tau
-\varphi|\mathbf H_\tau|^2
+\varphi
 |P_{T_xV_\tau}^{\perp}b_\tau-\mathbf H_\tau|^2
\right)\,d\mu_\tau\,d\tau\,.
\]
Smooth compactly supported vector fields are dense in $L^2(\varphi\mu_\tau d\tau;\R^{n+1})\ni \mathbf{H}$. Hence, taking the infimum over all $b$ of the previous display proves, for every $0 \leq s < t$
\[
\mu_t(\varphi_t)-\mu_s(\varphi_s)
\le \int_s^t\int
\left(
\partial_\tau\varphi
+\nabla\varphi\cdot\mathbf H_\tau
-\varphi|\mathbf H_\tau|^2
\right)\,d\mu_\tau\,d\tau\,.
\]
By Step 1 then $\|V_t\|=\mu_t$ for a.e. $t\ge 0$. 
Defining $V_t$ on the remaining exceptional times so that
$\|V_t\|=\mu_t$ (for instance, setting $V_0=V_{E_0}$ and $V_t := \mu_t \otimes \delta_{S}$ for an arbitrary $n$-plane $S$ at any exceptional bad time $t > 0$) completes the proof of Brakke's inequality for nonnegative tests $\varphi$ of class $C^2$. For a nonnegative $C_c^1$ test, choose nonnegative smooth approximations converging in $C^1$, with supports in a common compact set. The mass bound, the $L^2(d\mu_t\,dt)$ bound for
$\mathbf H$, and Cauchy--Schwarz allow passage to the limit in every term, so that the asserted $C_c^1$ formulation follows. 

\smallskip

\noindent \textbf{Step 4}: \textit{Dissipation, domination, and continuity at the initial time}

The dissipation inequality \eqref{eq:dissipation} is now an immediate consequence of Brakke's inequality after testing with $\varphi \equiv 1$ (and a suitable cut-off outside of the large ball containing the evolution). The initial datum satisfies 
$\mu_0=|D\chi_{E_0}|$, so that $\|V_0\|(\R^{n+1}) = \mu_0(\R^{n+1})=P(E_0)$.

For every $t\ge0$, the convergence
$E_t^h\to E_t$ in $L^1$, lower semicontinuity of perimeter,
and $\mu_t^h\rightharpoonup\mu_t$ imply
\[
|D\chi_{E_t}|(\psi)
\le
\liminf_{h\to0}|D\chi_{E_t^h}|(\psi)
=
\mu_t(\psi)
\qquad
\text{for every }0\le\psi\in C_c^0(\mathbb R^{n+1})\,.
\]
Thus $|D\chi_{E_t}|\le\mu_t$ for every $t\ge0$, thus proving domination.

As for continuity at $t=0$, by $L^1$ continuity of $E_t$, lower semicontinuity of perimeter, domination, and the global mass bound,
\[
    P(E_0)\le\liminf_{t\downarrow0}P(E_t)
     \le\liminf_{t\downarrow0}\|V_t\|(\mathbb R^{n+1})\le P(E_0)\,.
\]
Since both $P(E_t)$ and $\|V_t\|(\R^{n+1})$ are bounded above by $P(E_0)$, they both converge to $P(E_0)$.
For any sequence $t_j\downarrow0$, the uniform confinement and mass bound
permit extraction of a weak limit $\|V_{t_j}\|\rightharpoonup\nu$.
For every nonnegative $\psi\in C_c^0$, again by lower semicontinuity of perimeter and domination
\[
    |D\chi_{E_0}|(\psi)
    \le\liminf_j|D\chi_{E_{t_j}}|(\psi)
    \le\nu(\psi).
\]
The mass bound forces $\nu=|D\chi_{E_0}|$, proving weak continuity. Finally, by domination the measure $\|V_t\|-|D\chi_{E_t}|$ is nonnegative, and thus its total variation is simply $\|V_t\|(\R^{n+1}) - P(E_t)$, which tends to zero. This completes the proof.
\end{proof}

\section{Generalized BV flows and consequences}\label{sec:main}

We recall the notions of $L^2$-flow and generalized BV flow that will be
used below. We refer to \cite{MugRog,StuTon,Tas26}
for further details.

\begin{definition}[$L^2$-flow]
Let $\{V_t\}_{t\in [0,T)}$, possibly with $T=+\infty$, be a (measurable) family of $n$-varifolds in
$\mathbb{R}^{n+1}$, and set $d\mu(x,t) := d\|V_t\|(x)\,dt.$ 
We say that $\{V_t\}_{t\in[0,T)}$ is an \emph{$L^2$-flow} if the following hold:
\begin{itemize}
    \item[i)] for a.e.
$t\in(0,T)$, $V_t$ is integral, with bounded first variation absolutely continuous with respect to $\|V_t\|$ and generalized mean curvature vector $\mathbf H(\cdot,t)$; 
    \item[ii)] there exists a vector field $v\in L^2_{\mathrm{loc}}
    (\mu;\mathbb{R}^{n+1})$,  such that $ v(x,t)\perp T_xV_t$ for  $\mu$-a.e. $(x,t)$;
    \item[iii)]  for every compact set
$K\subseteq \mathbb{R}^{n+1}\times(0,T)$, there exists $C_K<\infty$ such that
\[
    \left|
    \int_{\mathbb{R}^{n+1}\times(0,T)}
    \bigl(\partial_t\phi+\nabla\phi\cdot v\bigr)\,d\mu
    \right|
    \leq C_K\|\phi\|_{L^\infty}\,,
\]
for every
$\phi\in C^1(\mathbb{R}^{n+1}\times(0,T))$
with $\operatorname{spt}\phi\subset K$.
\end{itemize}
The vector field $v$ defined above is called the velocity of the $L^2$-flow.
\end{definition}
    
The following notion of generalized BV flow has been introduced in \cite{StuTon}.

\begin{definition}[Generalized BV flow]
A pair $(\{V_t\}_{t\in[0,T)},\{E_t\}_{t\in[0,T)})$ 
is called a generalized BV flow in an open set $U \subset \R^{n+1}$ if $\{V_t\}$ is a Brakke flow in $U$, each $E_t$ is of (locally) finite perimeter in $U$,
\[
    |D\chi_{E_t}|\leq \|V_t\|
    \qquad\text{for every }t,
\]
and, denoting by $\mathbf H_t$ the generalized mean curvature of $V_t$,
\[
    \int_{E_{t_2}}\phi(x,t_2)\,dx
    -
    \int_{E_{t_1}}\phi(x,t_1)\,dx =
    \int_{t_1}^{t_2}\int_{E_t}
    \partial_t\phi\,dx\,dt
    +
    \int_{t_1}^{t_2}
    \int_{\partial^*E_t}
    \phi\,\mathbf H_t\cdot\nu_{E_t}
    \,d\mathcal H^n\,dt
\]
for every  $\phi\in C^1_c(U \times [0,T))$ and for every $0 \leq t_1 < t_2 < T$.
\end{definition}

The inequality
\[
    |D\chi_{E_t}|\leq\|V_t\|
\]
allows the  varifold to retain additional multiplicity possibly lost at the level of the sets $E_t$ due to cancellations phenomena of oriented boundaries. 

The notions above are linked by the following result. It is implicitly proved in \cite{StuTon,Tas25} and then proved in the present generality in \cite{Tas26}. We present the statement of the result for the two-phases problem, compare \cite[Theorem 1.2]{Tas26}.

\begin{theorem}
\label{thm:L2-to-BV}
Let $\{V_t\}_{t\in[0,T)}$ be an  $L^2$-flow in
$\mathbb{R}^{n+1}$ with space-time measure $d\mu=d\|V_t\|dt$ and velocity $v$, and let $\{E_t\}_{t\in [0,T)}$ be a   family of sets of finite perimeter satisfying
\[
    |D\chi_{E_t}|
    \leq \|V_t\|, \qquad \text{for every }t\in [0,T)\,.
\]
Define the corresponding space-time tube $\mathcal E:= \{(x,t)\in\mathbb{R}^{n+1}\times[0,T):
      x\in E_t\}$.
Assume that
\begin{itemize}
    \item[i)] it holds  $\Theta^{*n}(\|V_t\|,x)
    :=
    \limsup_{r\searrow0}
    \frac{\|V_t\|(B_r(x))}{\omega_nr^n}
    <\infty$ for a.e. $t$ and $\|V_t\|$-a.e. $x$;
    \item[ii)] it holds $
    |D_{x,t}\chi_{\mathcal E}|\ll \|V_t\|dt
    $ in $
    \mathbb{R}^{n+1}\times(0,T)$;
    \item[iii)] the tube $\mathcal E$ satisfies  $
    \chi_{\mathcal E}    \in
    C_{\mathrm{loc}}^{1/2}
    \bigl([0,T);L^1(\mathbb{R}^{n+1})\bigr)
    \cap
    BV_{\mathrm{loc}}
    \bigl(\mathbb{R}^{n+1}\times[0,T)\bigr)$.
\end{itemize}
Then, the velocity $v$ belongs to $L^2_{loc}(|D\chi_{E_t} | dt ;\R^{n+1})$, and for every 
$0\leq t_1<t_2<T$ and every
$\phi\in C_c^1(\mathbb{R}^{n+1}\times[0,T))$ it holds 
\begin{equation*}
    \int_{E_{t_2}}\phi(x,t_2)\,dx
    -
    \int_{E_{t_1}}\phi(x,t_1)\,dx =
    \int_{t_1}^{t_2}
    \int_{E_t}
    \partial_t\phi\,dx\,dt
    +
    \int_{t_1}^{t_2}
    \int_{\partial^*E_t}
    \phi\,v\cdot\nu_{E_t}
    \,d\mathcal H^n\,dt
\end{equation*}
\end{theorem}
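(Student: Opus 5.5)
The plan is to read the identity as the Gauss--Green formula on the space-time tube $\mathcal E$. The whole task is then to identify the time component of the space-time normal of $\partial^*\mathcal E$ with $-v\cdot\nu_{E_t}$, weighted by the spatial component.

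\textbf{Setup.} By iii), $\chi_{\mathcal E}\in BV_{loc}(\R^{n+1}\times[0,T))$. Let $\nu_{\mathcal E}=(\nu_x,\nu_t)\in\R^{n+1}\times\R$ be its measure-theoretic outer unit normal on the reduced boundary $\partial^*\mathcal E$. By the slicing theory of BV functions, for a.e.\ $t$ the slice $\mathcal E_t=E_t$ satisfies $D_x\chi_{E_t}\,dt=(\nu_x\,\mathcal H^{n+1}\llcorner\partial^*\mathcal E)$. The coarea formula on $\partial^*\mathcal E$ gives
\[
\int_{\partial^*\mathcal E} g\,|\nu_x|\,d\mathcal H^{n+1}=\int\int_{\partial^*E_t} g\,d\mathcal H^n\,dt .
\]
The $C^{1/2}(L^1)$ continuity in iii) lets us evaluate $\int_{E_t}\phi(\cdot,t)$ at every $t$, not only a.e., by approximating $\chi_{[t_1,t_2]}$ in time.

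\textbf{Step 1 (no horizontal boundary).} I would show that $\mathcal H^{n+1}(\{x\in\partial^*\mathcal E:\nu_x=0\})=0$. By ii), $\mathcal H^{n+1}\llcorner\partial^*\mathcal E\ll\mu$. On the other hand, the coarea identity shows that the set $\{\nu_x=0\}$ has $\mathcal H^n$-null slices for a.e.\ $t$. It is therefore $\mu$-null only once one knows that $\mu$ restricted to it is carried by the slices $\partial^*E_t$. This is where I would combine ii) with the density bound i) and the absolute continuity in time. The expected conclusion is $|D\chi_{\mathcal E}|\ll|D\chi_{E_t}|\,dt$, so that $\nu_t/|\nu_x|$ is defined $\mathcal H^{n+1}$-a.e.\ on $\partial^*\mathcal E$.

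\textbf{Step 2 (velocity tangency), the main obstacle.} I would prove that for $\mathcal H^{n+1}$-a.e.\ point of $\partial^*\mathcal E$ the vector $(v,1)$ is tangent to $\partial^*\mathcal E$, that is,
\[
\nu_t=-\,v\cdot\nu_x .
\]
The approach is a space-time blow-up at a point $(x_0,t_0)$ that is simultaneously
\begin{itemize}
\item a Lebesgue point of $v$ with respect to $\mu$,
\item a point of finite density for $\|V_{t_0}\|$ by i),
\item a point of approximate differentiability of $\partial^*\mathcal E$, at which the density of $|D\chi_{\mathcal E}|$ with respect to $\mu$ is positive.
\end{itemize}
Rescaling parabolically-in-form but linearly (by $r$ in both space and time), the $L^2$-flow inequality iii) passes to the limit and says that the tangent measure $\mu_\infty$ is transported by the constant field $v(x_0,t_0)$:
\[
\partial_t\mu_\infty+\diver\bigl(v(x_0,t_0)\,\mu_\infty\bigr)=0 .
\]
Hence $\mu_\infty$ is invariant along $(v(x_0,t_0),1)$. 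This is the Mugnai--R\"oger tangency argument. Since $|D\chi_{\mathcal E}|\ll\mu$ with positive density, the blow-up half-space of $\mathcal E$ has a boundary hyperplane contained in $\spt\mu_\infty$, and I would argue that it must be invariant in the same direction. That gives $(v,1)\cdot\nu_{\mathcal E}=0$. The delicate points are twofold: the bound in iii) is only an $L^\infty$-dual bound, so one must extract the transport equation from it in the blow-up; and one must exclude that the boundary hyperplane of $\mathcal E$ is transverse to a larger-dimensional $\spt\mu_\infty$. I expect to need the perpendicularity $v\perp T_xV_t$ in the second point.

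\textbf{Step 3 (conclusion).} Apply Gauss--Green to $\chi_{\mathcal E}$ on $\R^{n+1}\times[t_1,t_2]$ with the field $(0,\phi)$. This gives
\[
\int_{E_{t_2}}\phi_{t_2}-\int_{E_{t_1}}\phi_{t_1}-\int_{t_1}^{t_2}\int_{E_t}\partial_t\phi
=\int_{\partial^*\mathcal E\cap\{t_1<t<t_2\}}\phi\,\nu_t\,d\mathcal H^{n+1}.
\]
By Step 1 we may write $\nu_t=(\nu_t/|\nu_x|)\,|\nu_x|$. By Step 2, $\nu_t/|\nu_x|=-v\cdot\nu_{E_t}$ up to the sign convention for the outer normal of the tube versus that of $E_t$. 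The coarea identity then converts the right-hand side into
\[
\int_{t_1}^{t_2}\int_{\partial^*E_t}\phi\,v\cdot\nu_{E_t}\,d\mathcal H^n\,dt .
\]
Finally, $v\in L^2_{loc}(|D\chi_{E_t}|\,dt)$ follows at once from $v\in L^2_{loc}(\mu)$ and $|D\chi_{E_t}|\le\|V_t\|$, which also justifies the integrability used above.
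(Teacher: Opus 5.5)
The paper gives no proof of this statement; it quotes it from \cite{Tas26} (implicit in \cite{StuTon,Tas25}). Your architecture is the standard route in that literature: Gauss--Green on the tube, Mugnai--R\"oger tangency of $(v,1)$ to $\partial^*\mathcal E$, and slicing back to the $E_t$. Step~3 is fine up to a sign: with $\nu_{\mathcal E}$ outer, the boundary term is $-\int\phi\,\nu_t\,d\mathcal H^{n+1}$.

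\textbf{The gap is in Step~2.} This step carries the proof, and you leave it open at the decisive point. Knowing $\Pi:=\nu_{\mathcal E}(p)^\perp\subseteq\spt\mu_\infty$ and that $\mu_\infty$ is invariant along $w:=(v(p),1)$ does not give $w\in\Pi$. For instance, $\operatorname{dist}(y,\mathbb R w)^{-1}\,\mathcal L^{n+2}$ is $w$-invariant, has $(n+1)$-dimensional growth, and has full support. The perpendicularity $v\perp T_xV_t$ you plan to use cannot help: it compares $v$ with the spatial tangent planes of $V_t$, not with the space-time plane $\Pi$. Likewise, finiteness of $\Theta^{*n}(\|V_{t_0}\|,x_0)$ at the single time $t_0$ gives no control on the space-time blow-up of $\mu$.

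\textbf{The missing idea is locality of tangent measures under absolute continuity.}
\begin{itemize}
\item By ii) write $|D\chi_{\mathcal E}|=\theta\,\mu$. At $|D\chi_{\mathcal E}|$-a.e.\ $p\in\partial^*\mathcal E$ one has $0<\theta(p)<\infty$, and $p$ is a $\mu$-Lebesgue point of $\theta$ and of $v$ (Besicovitch).
\item Hence $\mu(B_r(p))\sim\omega_{n+1}r^{n+1}/\theta(p)$. With $\Phi_{p,r}(y):=(y-p)/r$, also $r^{-n-1}(\Phi_{p,r})_\#\mu\rightharpoonup\theta(p)^{-1}\mathcal H^{n+1}\llcorner\Pi$. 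So the blow-up of $\mu$ \emph{is} the plane, and there is nothing transversal to exclude.
\item The $L^\infty$-dual bound enters through Riesz: $\eta\mapsto\int(\partial_t\eta+\nabla\eta\cdot v)\,d\mu$ is a Radon measure $\lambda$. At $\mu$-a.e.\ $p$, $|\lambda|(B_r(p))=O(\mu(B_r(p)))=O(r^{n+1})$.
\item The rescaled transport functional equals $r^{-n}\lambda(\tilde\eta\circ\Phi_{p,r})=O(r)$. Freezing $v$ at $v(p)$ costs $r^{-n-1}\int_{B_{Cr}(p)}|v-v(p)|\,d\mu\to0$.
\item In the limit, $\int_\Pi\partial_w\tilde\eta\,d\mathcal H^{n+1}=0$ for all $\tilde\eta\in C^1_c$. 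The component of $w$ tangent to $\Pi$ integrates to zero, so $(w\cdot\nu_{\mathcal E})\int_\Pi\partial_{\nu_{\mathcal E}}\tilde\eta\,d\mathcal H^{n+1}=0$, i.e.\ $\nu_t=-v\cdot\nu_x$.
\end{itemize}

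\textbf{Step~1 then becomes unnecessary.} We have $-\phi\,\nu_t\,\mathcal H^{n+1}\llcorner\partial^*\mathcal E=\phi\,v\cdot\nu_x\,\mathcal H^{n+1}\llcorner\partial^*\mathcal E=-\phi\,v\cdot D_x\chi_{\mathcal E}$. Since $D_x\chi_{\mathcal E}=\int D\chi_{E_t}\,dt$ slices directly, you never divide by $|\nu_x|$. In any case $\nu_x=0$ would force $\nu_t=0$.

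If you still want Step~1 directly, the mechanism you state is not the right one. $\mu$ need not be carried by the slices $\partial^*E_t$; hidden even-multiplicity mass is allowed. What is needed is that i) gives $\|V_t\|\ll\mathcal H^n$ for a.e.\ $t$. Then $\mu(N)=\int\|V_t\|(N_t)\,dt=0$ for $N=\partial^*\mathcal E\cap\{\nu_x=0\}$, whose slices are $\mathcal H^n$-null by coarea, and ii) concludes.
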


Using the previous result we can show that flat flows are generalized BV.

\begin{theorem}\label{thm:main}
    Assume $1\le n\le 6$. Let $E_0\subseteq \R^{n+1} $ be a bounded set with finite perimeter and $|\partial E_0|=0$, and consider a flat flow $E_t $ starting from $E_0$. Let $V_t$ be the integral Brakke flow of Theorem \ref{thm:brakke}. Then, $\left( \{V_t\}_{t \geq 0}, \{E_t\}_{t \geq 0} \right)$ is a generalized BV flow: for every $0\leq t_1<t_2<+\infty$ and every
    $\phi\in C_c^1(\mathbb{R}^{n+1}\times[0,\infty))$ it holds 
    \begin{equation}
    \int_{E_{t_2}}\phi(x,t_2)\,dx
    -
    \int_{E_{t_1}}\phi(x,t_1)\,dx =
    \int_{t_1}^{t_2}
    \int_{E_t}
    \partial_t\phi\,dx\,dt
    +
    \int_{t_1}^{t_2}
    \int_{\partial^*E_t}
    \phi\,\mathbf H_{t}\cdot\nu_{E_t}
    \,d\mathcal H^n\,dt
    \label{eq:BV-gener}.
\end{equation}
\end{theorem}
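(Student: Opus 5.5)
The plan is to deduce the statement from Theorem \ref{thm:L2-to-BV}, taking the velocity $v:=\mathbf H_t$. First we check that $\{V_t\}$ is an $L^2$-flow. By Theorem \ref{thm:brakke} and Brakke's perpendicularity, $V_t$ is integral for a.e. $t$, its first variation is absolutely continuous, and $\mathbf H_t\perp T_xV_t$. By \eqref{eq:dissipation}, $\mathbf H\in L^2(\mu)$. For condition iii) we use Brakke's inequality \eqref{eq:def-Brakke}. For $\phi\ge0$ it gives $\int(\partial_t\phi+\nabla\phi\cdot\mathbf H)\,d\mu\ge-\int\phi|\mathbf H|^2d\mu\ge -\|\phi\|_\infty CM$, taking $s<t$ outside $\spt\phi$. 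For a signed $\phi$ we write $\phi=(\phi+C\eta)-C\eta$ with a fixed cutoff $\eta$. We apply the one-sided bound to both pieces. The upper bound comes from the identity for $\eta$, which is bounded using $\sup_t\|V_t\|(K)\le M$ and $\|\mathbf H\|_{L^2}$. This is the standard argument showing that Brakke flows are $L^2$-flows.

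Next we verify the remaining hypotheses. Condition i) holds because $V_t$ is integral with $L^2$ mean curvature for a.e. $t$, so its upper density is finite $\|V_t\|$-a.e. Domination $|D\chi_{E_t}|\le\|V_t\|$ holds for every $t>0$ by \eqref{domin}, and at $t=0$ since $\|V_0\|=|D\chi_{E_0}|$. For condition iii), the $C^{1/2}(L^1)$ bound is \eqref{eq:12Holder}. The space-time BV bound follows from $P(E_t)\le M$ together with a time-derivative bound obtained as in the next step.

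The main obstacle is condition ii), namely $|D_{x,t}\chi_{\mathcal E}|\ll\mu$. The spatial part is $|D\chi_{E_t}|dt\le\mu$ by domination, so it suffices to control the time derivative. The aim is to show that for every nonnegative $\psi\in C_c(\R^{n+1}\times(0,T))$,
\[
\Big|\int\chi_{\mathcal E}\,\partial_t\psi\Big|\le C\Big(\int\psi^2\,d\mu\Big)^{1/2}\quad\text{or at least}\quad \le \liminf_h \sum_k\int_{E^h_k\triangle E^h_{k-1}}\psi,
\]
together with a bound of the latter by a measure absolutely continuous with respect to $\mu$. At the discrete level, we bound $\int\psi_{kh}(\chi_{E^h_k}-\chi_{E^h_{k-1}})$ using normal coordinates around $\partial E^h_{k-1}$, which are available thanks to \eqref{tubeest}. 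Outside the slab $\{|d|\le hR\}$ we use Chebyshev with \eqref{sums}. Inside the slab the contribution is at most $\int\psi|d_{E^h_{k-1}}|/h$, up to Jacobian errors. Cauchy--Schwarz, using $D_\psi\le h\int\psi$-weighted perimeter dissipation from the scalar version of \eqref{local} and Lemma \ref{lem:diss-bd}, should then give
\[
\sum_k\int\psi\,|\chi_{E^h_k}-\chi_{E^h_{k-1}}|\le \Big(\sum_k\tfrac1h D_{\psi}(E^h_k,E^h_{k-1})\Big)^{1/2}\Big(h\sum_k P_{\psi}(E^h_{k-1})\Big)^{1/2}+o(1).
\]
The second factor converges to $(\int\psi\,d\mu)^{1/2}$. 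The first factor is uniformly bounded by \eqref{sums}. Passing to the limit gives $|\partial_t\chi_{\mathcal E}|(\psi)\le C(\int\psi\,d\mu)^{1/2}$, where $C^2$ is the limsup of the weighted dissipation. Localizing to small sets then yields absolute continuity with respect to $\mu$, since the right-hand side vanishes on $\mu$-null compact sets. Here the subtle point is that the dissipation factor must itself be localized: it should be bounded by a measure, specifically the weak limit of $\frac1h|d|\chi_{E\triangle F}\,dx\,dt$, and not merely by a constant. One then has to show, with a second Cauchy--Schwarz, that this limit measure vanishes where $\mu$ does.

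With all hypotheses checked, Theorem \ref{thm:L2-to-BV} with $v=\mathbf H_t$ yields \eqref{eq:BV-gener} for all $0\le t_1<t_2$. Together with Theorem \ref{thm:brakke}, this proves the generalized BV property.
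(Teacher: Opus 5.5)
Your overall architecture matches the paper's. You reduce to Theorem~\ref{thm:L2-to-BV} with $v=\mathbf H_t$, obtain the $L^2$-flow property from Brakke's inequality, and control $D_x\chi_{\mathcal E}$ by domination. For the $L^2$-flow step the paper simply cites \cite{StuTon}. Your sketch has a harmless sign slip: with $s,t$ outside the time support, Brakke gives $\int(\partial_t\phi+\nabla\phi\cdot\mathbf H)\,d\mu\ge\int\phi|\mathbf H|^2\,d\mu\ge 0$.

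\textbf{Where the routes differ: the time derivative.} You bound the two-sided quantity $\sum_k\int\psi_{kh}|\chi_{E^h_k}-\chi_{E^h_{k-1}}|$ by a localized Luckhaus--Sturzenhecker/Mugnai--Seis--Spadaro estimate: Chebyshev off the slab $\{|d|<hR\}$, Cauchy--Schwarz on it. This leads to $|D_t\chi_{\mathcal E}|(\psi)\le 2\sqrt{M\|\psi\|_\infty}\,(\int\psi\,d\mu)^{1/2}$.

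The slab step is described loosely; ``inside the slab the contribution is at most $\int\psi|d|/h$'' is not literally true. What you need is the fibrewise inequality $|A_p|^2\le 4\int_{A_p}|s|\,ds$ for $A_p=\{s\colon \Psi(p,s)\in E\triangle F,\ |s|<hR\}$. It gives $\int_{\partial F}\psi\,|A_p|\,d\mathcal H^n\le 2\,(\tfrac1h D_\psi(E,F))^{1/2}(hP_\psi(F))^{1/2}$, up to freezing and Jacobian errors. These errors vanish after summation if $R\to\infty$ with $hR\ll\sqrt h$. You should state this explicitly.

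The paper instead reuses the positive-part computation of Lemma~\ref{lem:diss-bd}. This gives the one-sided affine bound $\int(\chi_{E^h_k}-\chi_{E^h_{k-1}})\varphi_{kh}\le \tfrac1h D(E^h_k,E^h_{k-1})+\tfrac h2\int_{\partial E^h_{k-1}}\varphi_{kh}^2\,d\mathcal H^n+O(h^{3/2})$. Only after $h\to0$ does it replace $\varphi$ by $\pm\lambda\varphi$ and optimize in $\lambda$. The result is $|D_t\chi_{\mathcal E}(\varphi)|\le\sqrt{2M}\,\|\varphi\|_{L^2(\mu)}$, i.e.\ $D_t\chi_{\mathcal E}=w\,\mu$ with $w\in L^2(\mu)$, with no slab splitting or fibrewise rearrangement.

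Your route yields only a weak-$L^2$ density. That is enough, since absolute continuity plus finite mass is all Theorem~\ref{thm:L2-to-BV} requires.

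\textbf{The ``subtle point'' in your last paragraph is unnecessary and somewhat misdirected.} Bounding the dissipation factor by the constant $\sum_k\tfrac1h D_\psi\le\|\psi\|_\infty M$ already suffices. For $0\le\psi\le1$ with $\chi_K\le\psi\le\chi_U$ it gives $|D_t\chi_{\mathcal E}|(K)\le 2\sqrt{M\,\mu(U)}$. Letting $U\downarrow K$ (outer regularity of $\mu$) shows $|D_t\chi_{\mathcal E}|(K)=0$ whenever $\mu(K)=0$, exactly as your preceding sentence says. You do not need to show that the limiting dissipation measure vanishes where $\mu$ does, and it is not clear that one could. Localizing the dissipation would only upgrade the density to $L^2(\mu)$, which the paper's duality argument gives directly.
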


\begin{proof}
    The proof  essentially follows from Theorem \ref{thm:brakke} and the arguments of \cite{StuTon}.
    Note first that $V_t$ is a Brakke flow, thus  by the proof of \cite[Thm 4.3]{StuTon} it is also an $L^2$-flow with velocity $v(\cdot,t)=\mathbf H_{t}$.  By the integrality of $V_t$, item \textit{i)} in Theorem \ref{thm:L2-to-BV} follows. 
   
    We then show item \textit{ii)}. Set $M=P(E_0)<+\infty$, and recall the notation $\mathcal E$ for the space-time tube of the flat flow. Note first that $D_x\chi_{\mathcal E}=D\chi_{E_t}\,dt$. Indeed, for every $b\in C_c^1(\mathbb R^{n+1}\times(0,T);\mathbb R^{n+1})$,  
    Fubini's theorem  and Gauss-Green (which can be applied since $E_t$ is of finite perimeter for every $t$) give
    \[
     \langle D_x\chi_{\mathcal E}, b\rangle
     =-\iint_{\mathcal E}\diver_x b\,dx\,dt
     =\iint b(\cdot,t)\cdot dD\chi_{E_t}\,dt\,.
    \]
   Then, by \eqref{domin} it holds 
\[
|D_x \chi_{\mathcal E}|=|D\chi_{E_t}| dt =\|V_{E_t}\|dt \le \|V_t\|dt,
\]
thus we only need to show the absolute continuity of $D_t\chi_{\mathcal E}$ with respect to $\|V_t\|dt$. 
We follow the argument of
\cite[Prop~4.5 and Cor~4.6]{StuTon}.
The result can be shown by using the estimates  of
\cite[Proposition 4.0.1]{MSS} \footnote{We explicitly note that this result does not require the convergence of the perimeter energies: in \cite{MSS} the convergence of the perimeters is only needed to characterize the limiting velocity as the mean curvature of the evolving sets.}.
In order to make the proof self-contained, we present a different proof based on the localized estimates of Lemma \ref{lem:tubecalc}. 

Fix a test function $\varphi\in C^2_c(\R^{n+1} \times (0,T))$, and set $\varphi_{kh}(x) :=\varphi(x,kh)$. For $k \geq 2$, also denote $\sigma=\operatorname{sgn}(d_{E^h_{k-1}})$.
Then
\[ 
\begin{aligned} \int(\chi_{E^h_k}-\chi_{E^h_{k-1}})\varphi_{kh} -\frac{D({E^h_{k}},{E^h_{k-1}})}h &= \int|\chi_{E^h_{k}}-\chi_{E^h_{k-1}}|\left(\sigma \varphi_{kh}-\frac{|d_{E^h_{k-1}}|}{h}\right)\\ &\le \int\left(\sigma \varphi_{kh}-\frac{|d_{E^h_{k-1}}|}{h}\right)_+. 
\end{aligned} 
\]
The positive part is supported in $|d_{E^h_{k-1}}|\le h\|\varphi\|_\infty$. For sufficiently small $h$ (depending on $\|\varphi\|_\infty$), it is then supported in the $O(\sqrt{h})$-neighborhood of $E^h_{k-1}$ where we can use normal coordinates by Lemma \ref{lem:tubecalc}. The same computations from Lemma \ref{lem:diss-bd} yield
\begin{equation}\label{eq:incr-dh}
    \int(\chi_{E^h_{k}}-\chi_{E^h_{k-1}})\varphi_{kh} \le \frac{D({E^h_{k}},{E^h_{k-1}})}h +\frac h2\int_{\partial {E^h_{k-1}}}\varphi_{kh}^2\,d\mathcal H^n +C_{\varphi} h^{3/2}M\,,
\end{equation}
for all $k \geq 2$. For $k=1$, the left-hand side vanishes for $h$ sufficiently small, because $\varphi$ vanishes in a neighborhood of $t=0$. Now, sum the discrete increments \eqref{eq:incr-dh} with $2 \leq k \leq [T/h]$ and then pass to the limit $h\to 0$. By discrete integration by parts, and using that $\varphi$ vanishes both near $t=0$ and $t=T$, we have 
\[
\sum_k \int (\chi_{E_k^h}-\chi_{E^h_{k-1}}) \varphi_{kh} = - \int_0^T \int_{E^h_t} \partial_t \varphi\,dxdt \xrightarrow{h \to 0} -\int_0^T\int_{E_t} \partial_t \varphi\, dxdt\,.
\]
The quadratic term converges to $\frac12 \int \varphi^2\,d\mu_t\,dt$ by measure convergence and uniform continuity of the test, whereas the sum of the remainders tends to zero. By  the summability of the dissipations \eqref{sums} we then get  
\[ 
    D_t\chi_{ \mathcal E}(\varphi) \le M+\frac12\int \varphi^2\,d\mu_tdt. 
\]
Apply this inequality replacing $\varphi$ with $\lambda \varphi$ and $-\lambda \varphi$, for each
fixed $\lambda>0$. Optimizing \emph{after} the limit $h\to0$ yields
\[
    | D_t\chi_{\mathcal E}(\varphi)|
    \le\inf_{\lambda>0}
       \left(\frac M\lambda+\frac\lambda2\|\varphi\|_{L^2(\mu)}^2\right)
    =\sqrt{2M}\,\|\varphi\|_{L^2(\mu)}.
\]
Therefore,
\[
D_t\chi_{\mathcal E}=w\,d\mu_tdt, \qquad \int_0^T\int w^2\,d\mu_tdt\le CM.   \]
In particular,
\[
\begin{split}
    |D_{x,t}\chi_{\mathcal E}|
    &\le (1+|w|)\,d\mu,\\
    |D_{x,t}\chi_{\mathcal E}|
       (\mathbb R^{n+1}\times(0,T))
    &\le TM+\sqrt{TM}\sqrt{2M}
     =M\bigl(T+\sqrt{2T}\bigr).
\end{split}
\]
This proves the required absolute continuity and space-time BV regularity,  whereas \eqref{eq:12Holder} gives $\chi_{E_t}\in C^{1/2}_{\mathrm{loc}}([0,\infty);L^1)$. All assumptions of Theorem \ref{thm:L2-to-BV} are now satisfied. Applying with vector velocity $\mathbf H$ completes the proof.
\end{proof}

\begin{remark}
Notice that, as a consequence of Theorem \ref{thm:main}, we have the explicit representation
\begin{equation} \label{eq:time-derivative}
    D_t\chi_{\mathcal E} = - \mathbf H \cdot \frac{d (D\chi_{E_t})}{d\|V_t\|} \, d\|V_t\|dt \qquad \mbox{in the sense of space-time measures}\,,
\end{equation}
that is the limit $w$ in the proof of Theorem \ref{thm:main} admits the explicit characterization $w = -\mathbf{H} \cdot \frac{d (D\chi_{E_t})}{d\|V_t\|}$.
\end{remark}

Under suitable conditions on the initial datum, it is possible to guarantee absence of discrepancy between the Brakke flow and the underlying flat flow. This can be done exploiting the monotonicity of the entropy functional,  after Colding-Minicozzi \cite{ColMin12}. Recall that, for a finite $n$-dimensional measure $\mu$ the entropy is defined by
\[
    \lambda(\mu):=
    \sup_{y\in\mathbb R^{n+1},\,r>0}
    (4\pi r^2)^{-n/2}
    \int e^{-\frac{|x-y|^2}{4r^2}}\,d\mu(x).
\]

\begin{corollary}[Initial entropy below two]\label{cor:entropy-two}
Under the assumptions of Theorem~\ref{thm:main}, suppose additionally that
$\lambda(|D\chi_{E_0}|)<2$.
Let $\{E_t\}_{t\ge 0}$ be a flat flow obtained as $L^1$-limit point of $\{E^{h_j}_t\}_{t\ge 0}$.
Then,
\[
    V_t=V_{E_t}\qquad\text{for almost every }t>0.
\]
In particular,  $E_t$ is a  BV solution, and   for every finite $T$,
\begin{equation} \label{en-conv}
    \lim_{j \to \infty}\int_0^T P(E_t^{h_j})\,dt
    =\int_0^T P(E_t)\,dt.
\end{equation}
Moreover, $\{V_t\}$ is a unit density Brakke flow, and therefore for a.e. $t > 0$ there exists $B_t$ with $\mathcal H^n(B_t)=0$ such that for every $x \in \mathrm{spt}\,\|V_t\| \setminus B_t$ $(x,t)$ is a regular point of the flow.  
\end{corollary}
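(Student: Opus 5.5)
The plan is to combine Huisken's monotonicity formula, which holds for integral Brakke flows, with the parity structure \eqref{eq:parity} coming from the $\mathrm{mod}(2)$ compatibility \eqref{mod2-equiv}. By Theorem~\ref{thm:brakke}, $\{V_t\}$ is an integral Brakke flow with $\|V_0\|=|D\chi_{E_0}|$ and $\|V_t\|\rightharpoonup|D\chi_{E_0}|$ as $t\to0^+$. Huisken's monotonicity, applied with backward heat kernels centered at arbitrary $(y,t_0+r^2)$, gives that the Gaussian density ratios of $\|V_t\|$ are nonincreasing in time. Since every Gaussian integral at positive time is bounded by a Gaussian integral of the initial measure, we get $\lambda(\|V_t\|)\le\lambda(|D\chi_{E_0}|)<2$ for every $t>0$. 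The initial measure has finite mass and the flow is confined to $B_R$, so the Gaussian weights are admissible test functions after a standard cutoff. Monotonicity for test functions in Brakke's inequality with noncompact support then follows from \eqref{eq:def-Brakke} by approximation, using the mass bound and the $L^2$ bound on $\mathbf H$.

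Next, fix a.e.\ $t>0$ such that $V_t$ is integral and \eqref{mod2-equiv-2} holds. For $\|V_t\|$-a.e.\ $x$, the varifold $V_t$ has an approximate tangent plane with integer density $\theta_t(x)$. Blowing up at such a point, the Gaussian ratio at scale $r\to0$ tends to $\theta_t(x)$, because the Gaussian integral of a multiplicity-$\theta$ plane equals $\theta$. Hence $\theta_t(x)\le\lambda(\|V_t\|)<2$, so $\theta_t=1$ $\|V_t\|$-a.e. By \eqref{eq:parity} the density is even off $\partial^*E_t$, which is impossible when it equals $1$. Therefore $\|V_t\|(M_t\setminus\partial^*E_t)=0$, and $\theta_t=1$ on $\partial^*E_t$. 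Equivalently, $Z_t=0$ in \eqref{mod2-equiv-2}, so $V_t=V_{E_t}$.

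The consequences then follow in order. First, Theorem~\ref{thm:main} with $\mathbf H_t=\mathbf H_{V_{E_t}}$ gives the Luckhaus--Sturzenhecker BV formulation. Second, $\int_0^TP(E^{h_j}_t)\,dt=\int_0^T\mu^{h_j}_t(\R^{n+1})\,dt$ by definition of $\mu^h_t$. By the every-$t$ convergence $\mu^h_t\rightharpoonup\mu_t$ together with confinement to $B_{R}$, the total masses converge for every $t$. The mass bound $M$ then lets dominated convergence apply, so the integral converges to $\int_0^T\|V_t\|(\R^{n+1})\,dt=\int_0^TP(E_t)\,dt$, which is \eqref{en-conv}. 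Finally, the flow is a unit-density integral Brakke flow with $\|V_t\|=\mathcal H^n\llcorner\partial^*E_t$ for a.e.\ $t$. Brakke's partial regularity theorem in the form of \cite{KasTon,STend} then yields, for a.e.\ $t$, that $\mathcal H^n$-a.e.\ point of $\spt\|V_t\|$ is a regular point.

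I expect the main obstacle to be the justification of Huisken's monotonicity for this weak flow up to $t=0$, which requires controlling the initial trace. Brakke's inequality \eqref{eq:def-Brakke} is established with $s=0$ allowed and $\|V_0\|=|D\chi_{E_0}|$. I would therefore derive the monotone quantity directly from \eqref{eq:def-Brakke} tested with $\phi=\eta\,\rho_{y,t_0}$ on $[0,t]$, with $t<t_0$. This avoids any appeal to continuity beyond what is already stated. A secondary subtlety is the last regularity claim. It may require checking that the unit-density hypothesis of the cited regularity theorem holds in the space-time sense; the right-continuity of masses at every time from Step~1 should suffice for this.
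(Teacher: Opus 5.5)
Your proposal is correct and follows essentially the same route as the paper: Huisken monotonicity gives $\lambda(\|V_t\|)\le\lambda(|D\chi_{E_0}|)<2$, integrality then forces unit density $\|V_t\|$-a.e., the $\mathrm{mod}(2)$ structure \eqref{mod2-equiv-2} rules out $Z_t\neq 0$, and the energy convergence and partial regularity follow from the every-$t$ weight convergence \eqref{eq:conv-weight1} with dominated convergence and from \cite{KasTon}, respectively. Deriving monotonicity directly from \eqref{eq:def-Brakke} with $s=0$ only makes explicit a step the paper cites, and the space-time unit-density worry is not needed, since the paper applies the regularity theorem using exactly the a.e.-$t$ unit density you obtain.
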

\begin{proof}
Write $\mu_t:=\|V_t\|$. Huisken monotonicity formula  for the Brakke flow gives
$\lambda(\mu_t)\le\lambda(\mu_0)$. Consider a time $t$ in the full-measure set of times where $V_t$ is integral, and a point $x$ where the approximate tangent plane to $\mu_t$ and the density $\Theta^n(\mu_t,x)$ exist. Notice that $\mu_t$-a.e. $x$ can be chosen as above. Then, at such points it holds
\[\Theta^n(\mu_t,x)\le\lambda(\mu_t)<2\,,\]
as one can immediately see by using the definition of approximate tangent and the fact that the $n$-dimensional Gaussian integrates to $1$ with respect to $\mathcal H^n$ on an $n$-dimensional plane. Since $\Theta^n(\mu_t,x)$ is a positive integer, it must then be equal to one.
The mod$(2)$ compatibility \eqref{mod2-equiv} and domination therefore imply
$V_t=V_{E_t}$. The generalized BV identity  \eqref{eq:BV-gener} and the identification of the mean curvature $\mathbf H_t= \mathbf H_{E_t}$  then  give the
ordinary BV formulation. Finally, \eqref{eq:conv-weight1}  implies that 
$P(E_t^{h_j})\to\mu_t(\mathbb R^{n+1})$; the latter equals $P(E_t)$
for almost every $t$, and thus \eqref{en-conv} follows from dominated convergence. The last assertion follows from \cite[Theorem 3.2]{KasTon}.
\end{proof}

We conclude recording an estimate which is an immediate corollary of Theorem \ref{thm:main}; see \cite[Theorem 2.14]{StuTon}.

\begin{corollary}\label{cor:extinction}
    Under the same assumptions of Theorem \ref{thm:main}, setting $a(t):= \|V_t\|(\R^{n+1})$, it holds
    \begin{equation} \label{eq:holder-sharp}
    |E_t\triangle E_s|
    \le\left[\frac{t-s}{2}\bigl(a(s)^2-a(t)^2\bigr)\right]^{1/2}
    \qquad(0\le s<t).
\end{equation}
    In particular, if the extinction time $T_\star$ is defined by
    \[
    T_\star := \sup\{t \geq 0 \, \colon \, \|V_t\| \neq 0\}\,.
    \]
    then, if $|E_0|>0$ it holds
    \begin{equation} \label{eq:extinction}
        T_\star \geq 2 \left ( \frac{|E_0|}{P(E_0)} \right)^2\,.
    \end{equation}
\end{corollary}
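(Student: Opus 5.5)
The plan is to combine the generalized BV identity \eqref{eq:BV-gener} with Cauchy--Schwarz and the dissipation contained in Brakke's inequality, following \cite[Theorem 2.14]{StuTon}.

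\emph{Step 1: volume bound.} Fix $0\le s<t$ and test \eqref{eq:BV-gener} with $\phi(x,\tau)=\psi(x)$, where $\psi$ is a spatial function with $|\psi|\le1$. Since $\partial_\tau\phi=0$, this gives
\[
\int_{E_t}\psi-\int_{E_s}\psi=\int_s^t\int_{\partial^*E_\tau}\psi\,\mathbf H_\tau\cdot\nu_{E_\tau}\,d\mathcal H^n\,d\tau .
\]
Choosing $\psi$ to approximate $\chi_{E_t\setminus E_s}-\chi_{E_s\setminus E_t}$ in $L^1$ makes the left side approach $|E_t\triangle E_s|$. Such $\psi$ can be taken in $C^1_c$ with $|\psi|\le1$, because all sets lie in a fixed ball. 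On the right we bound $|\psi|\le1$, use domination \eqref{domin}, and apply Cauchy--Schwarz. This yields
\[
|E_t\triangle E_s|\le\int_s^t\Big(\int|\mathbf H_\tau|\,d\|V_\tau\|\Big)d\tau\le\int_s^t a(\tau)^{1/2}\Big(\int|\mathbf H_\tau|^2d\|V_\tau\|\Big)^{1/2}d\tau .
\]

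\emph{Step 2: energy identity and the main obstacle.} The sharp form requires a bound of the type $\int_s^t\int|\mathbf H|^2\,d\|V_\tau\|\,d\tau\le a(s)-a(t)$. This follows from Brakke's inequality \eqref{eq:def-Brakke} with a test function equal to $1$ on the ball containing the flow, with the cutoff chosen so that $\nabla\phi$ vanishes on the support; it is the version of \eqref{eq:dissipation} started at time $s$. Since $a$ is nonincreasing, a naive Cauchy--Schwarz in time gives only $|E_t\triangle E_s|\le[(t-s)a(s)(a(s)-a(t))]^{1/2}$. That is weaker than the claim, because $a(s)^2-a(t)^2=(a(s)-a(t))(a(s)+a(t))$ and $a(s)\ge\frac{a(s)+a(t)}2$.

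To recover the factor $\tfrac12(a(s)^2-a(t)^2)$, I would instead use Brakke's inequality on subintervals together with the weighted Cauchy--Schwarz inequality
\[
\Big(\int_s^t a^{1/2}\Big(\int|\mathbf H|^2\Big)^{1/2}d\tau\Big)^2\le(t-s)\int_s^t a(\tau)\int|\mathbf H_\tau|^2\,d\|V_\tau\|\,d\tau .
\]
It then remains to show $\int_s^t a\,(-da)\le\tfrac12(a(s)^2-a(t)^2)$ in the sense of Stieltjes measures, where $\int|\mathbf H|^2d\|V_\tau\|\,d\tau\le -da$ as measures. This holds because $a$ is nonincreasing: jumps only help, since for a downward jump $a_-\cdot(a_--a_+)$ is compared with $\tfrac12(a_-^2-a_+^2)$, and the Stieltjes convention must be handled with care. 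The jump bookkeeping is the main technical obstacle. I would try first to evaluate $a$ at the right endpoint in a Riemann--Stieltjes sum, which gives $\sum a(\tau_{i+1})(a(\tau_i)-a(\tau_{i+1}))\le\sum\tfrac12(a(\tau_i)^2-a(\tau_{i+1})^2)$. To match it, Brakke's inequality would be applied on each $[\tau_i,\tau_{i+1}]$ while bounding $\|V_\tau\|(\R^{n+1})\le a(\tau_i)$ on that interval. That bound has the wrong side of monotonicity, so if it fails to close I would fall back on citing \cite[Theorem 2.14]{StuTon} directly, as the paper suggests.

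\emph{Step 3: extinction time.} Set $s=0$ and $t>T_\star$. Then $a(t)=0$ and $E_t=\emptyset$ up to null sets, since $|D\chi_{E_t}|\le\|V_t\|=0$ and the sets are bounded. Also $a(0)=P(E_0)$. Inequality \eqref{eq:holder-sharp} then gives $|E_0|^2\le\tfrac t2P(E_0)^2$. Letting $t\downarrow T_\star$ yields \eqref{eq:extinction}.
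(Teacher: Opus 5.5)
Your Steps 1 and 3 are correct and match the paper. In Step 1, multiply $\psi$ by a time cutoff $\eta\equiv1$ on $[s,t]$, as the paper does, since the test must be compactly supported in $\R^{n+1}\times[0,\infty)$. Step 3 is just the contrapositive of the paper's isoperimetric argument.

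The gap is in Step 2. Write $e(\tau):=\int|\mathbf H_\tau|^2\,d\|V_\tau\|$. The inequality
\[
\int_s^t a(\tau)\,e(\tau)\,d\tau\le\tfrac12\bigl(a(s)^2-a(t)^2\bigr)
\]
is exactly what produces the sharp factor, and it is never proved; the routes you sketch do not give it. Apply Brakke's inequality on a partition $s=\tau_0<\dots<\tau_N=t$ and bound $a\le a(\tau_i)$ on $[\tau_i,\tau_{i+1}]$. This only yields
\[
\sum_i a(\tau_i)\bigl(a(\tau_i)-a(\tau_{i+1})\bigr)=\tfrac12\bigl(a(s)^2-a(t)^2\bigr)+\tfrac12\sum_i\bigl(a(\tau_i)-a(\tau_{i+1})\bigr)^2,
\]
and the last term does not vanish as the mesh goes to zero whenever $a$ has a jump. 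Such jumps, i.e.\ instantaneous mass loss, are allowed by Brakke's inequality.

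Your heuristic that jumps only help is also backwards for the comparison you wrote, since $a_-(a_--a_+)\ge\tfrac12(a_-^2-a_+^2)$. Indeed, the Stieltjes inequality $\int a\,(-da)\le\tfrac12(a(s)^2-a(t)^2)$ fails for a unit downward step if $a$ takes its left value at the jump. Falling back on a citation leaves the sharp constant unproved.

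The missing observation is that $e\,d\tau$ is absolutely continuous in time, so it does not see the jumps (or the Cantor part) of the monotone function $a$ at all. The paper uses this directly:
\begin{itemize}
\item Brakke's inequality, with a test equal to $1$ on the ball containing the flow, gives $e\,d\tau\le-Da$ as measures.
\item Comparing absolutely continuous parts, $e\le-a'$ for a.e.\ $\tau$, hence $\sqrt{a\,e}\le\sqrt{-(a^2)'/2}$ a.e.
\item Cauchy--Schwarz in time, together with $\int_s^t-(a^2)'\,d\tau\le a(s)^2-a(t)^2$ (true for the nonincreasing function $a^2$), gives the claim.
\end{itemize}
Within your weighted Cauchy--Schwarz route an equally short fix is available. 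The function $b(\tau):=a(s)-\int_s^\tau e$ is absolutely continuous with $b'=-e$ a.e., and $b\ge a$ on $[s,t]$ by Brakke's inequality. Since $e\ge0$ and $b(t)\ge a(t)\ge0$,
\[
\int_s^t a\,e\,d\tau\le\int_s^t b\,e\,d\tau=\tfrac12\bigl(a(s)^2-b(t)^2\bigr)\le\tfrac12\bigl(a(s)^2-a(t)^2\bigr).
\]
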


\begin{proof}
Test \eqref{eq:BV-gener} with $\phi(x,\tau) = \eta(\tau)\psi(x)$, where $\eta \in C^1_c([0,\infty))$ with $\eta(\tau) \equiv 1$ on $[s,t]$ and $\psi \in C^1_c(\R^{n+1})$ with $\|\psi\|_\infty \leq 1$. Then by Cauchy-Schwarz  
\[
\left| \int_{\R^{n+1}} (\chi_{E_t}-\chi_{E_s})\,\psi \,dx\right|\leq \int_s^t \sqrt{a(\tau) \,e(\tau)} \, d\tau\,,
\]
where $e(t) = \int |\mathbf H_t|^2\, d\mu_t$. Taking the supremum over $\psi$ on the left then yields
\begin{equation}
    |E_t \triangle E_s| \leq \int_s^t \sqrt{a(\tau) \,e(\tau)} \, d\tau\,.
\end{equation}
The nonincreasing function $a$ satisfies $e(t)\le-a'(t)$ almost everywhere thanks to \eqref{eq:def-Brakke}. Thus we may continue
\[
 \int_s^t \sqrt{a(\tau) \,e(\tau)} \, d\tau \leq \int_s^t \sqrt{-\frac{(a^2(\tau))'}{2}}\,.
\]
We deduce \eqref{eq:holder-sharp} from Cauchy-Schwarz.

For the second part of the statement, observe first that \eqref{eq:holder-sharp} gives
\[
|E_t| \geq |E_0| - \sqrt{t/2} \,P(E_0)\,,
\]
and thus $|E_t| > 0$ for all 
\[
t < 2 \left( \frac{|E_0|}{P(E_0)}\right)^2\,.
\]
For all such $t$, $P(E_t)>0$ by the isoperimetric inequality, and thus $\|V_t\|(\R^{n+1})\geq P(E_t)>0$ by domination. This implies \eqref{eq:extinction} assuming $|E_0|>0$.
\end{proof}

\noindent\textbf{Acknowledgements.}
The authors are supported by the Italian Ministry of University
and Research (MUR) through the FIS 2 project \textbf{SiGmA}: “\textit{Singularities in Geometric
Analysis: Minimal Surfaces and Mean Curvature Flows}”, project code FIS-2023-02962
(CUP G53C25000120001). 

\smallskip

\noindent\textbf{AI usage statement.}
LLM models were used for a preliminary discussion on the problem. In particular, the idea of using the calibration $z$ of Lemma \ref{lem:allmin} has been conceived in an exchange with ChatGPT 5.6. The paper, including all its statements and proofs, has been written solely by the authors. ChatGPT 6 has been used in support of the final proofreading.

\printbibliography

@article{GigaYamauchi93,
  author = {Giga, Yoshikazu and Yama-uchi, Kazuyuki},
  title = {On a lower bound for the extinction time of surfaces moved by mean curvature},
  journal = {Calculus of Variations and Partial Differential Equations},
  year = {1993},
  volume = {1},
  number = {4},
  pages = {417--428},
  doi = {10.1007/BF01206960}
}

@article {KasTon,
    AUTHOR = {Kasai, Kota and Tonegawa, Yoshihiro},
     TITLE = {A general regularity theory for weak mean curvature flow},
   JOURNAL = {Calc. Var. Partial Differential Equations},
  FJOURNAL = {Calculus of Variations and Partial Differential Equations},
    VOLUME = {50},
      YEAR = {2014},
    NUMBER = {1-2},
     PAGES = {1--68},
      ISSN = {0944-2669,1432-0835},
   MRCLASS = {53C44 (49Q20)},
  MRNUMBER = {3194675},
MRREVIEWER = {Chun-Chi\ Lin},
       DOI = {10.1007/s00526-013-0626-4},
       URL = {https://doi.org/10.1007/s00526-013-0626-4},
}

@article {STend,
    AUTHOR = {Stuvard, Salvatore and Tonegawa, Yoshihiro},
     TITLE = {End-time regularity theorem for {B}rakke flows},
   JOURNAL = {Math. Ann.},
  FJOURNAL = {Mathematische Annalen},
    VOLUME = {390},
      YEAR = {2024},
    NUMBER = {3},
     PAGES = {3317--3353},
      ISSN = {0025-5831,1432-1807},
   MRCLASS = {53E10 (49Q20)},
  MRNUMBER = {4803454},
MRREVIEWER = {Jingwen\ Chen},
       DOI = {10.1007/s00208-024-02826-8},
       URL = {https://doi.org/10.1007/s00208-024-02826-8},
}

@article{cdcy,
      title={Sharp dynamical isoperimetric principle for mean curvature flow}, 
      author={Beomjun Choi and Wenkui Du and Seung Chul Park and Junseo Youn},
      year={2026},
      eprint={2610.10461},
      archivePrefix={arXiv},
      primaryClass={math.DG},
      url={https://arxiv.org/abs/2610.10461}, 
      Note={Preprint arXiv:2610.10461},
}

@article {Tas24,
    AUTHOR = {Tashiro, Kiichi},
     TITLE = {Existence of {BV} flow via elliptic regularization},
   JOURNAL = {Hiroshima Math. J.},
  FJOURNAL = {Hiroshima Mathematical Journal},
    VOLUME = {54},
      YEAR = {2024},
    NUMBER = {2},
     PAGES = {233--259},
      ISSN = {0018-2079,2758-9641},
   MRCLASS = {53E10 (28A75)},
  MRNUMBER = {4775228},
       DOI = {10.32917/h2023007},
       URL = {https://doi.org/10.32917/h2023007},
}

@article{Anz,
  title={Pairings between measures and bounded functions and compensated compactness},
  author={Anzellotti, Gabriele},
  journal={Annali di Matematica pura ed applicata},
  volume={135},
  number={1},
  pages={293--318},
  year={1983},
  publisher={Springer}
}

@article{ATW,
  author  = {Almgren, F. and Taylor, J. E. and Wang, L.},
  title   = {Curvature-driven flows: a variational approach},
  journal = {SIAM J. Control Optim.},
  volume  = {31},
  year    = {1993},
  pages   = {387--438},
  doi     = {10.1137/0331020}
}

@article{LS,
  author  = {Luckhaus, S. and Sturzenhecker, T.},
  title   = {Implicit time discretization for the mean curvature flow equation},
  journal = {Calc. Var. PDE},
  volume  = {3},
  year    = {1995},
  pages   = {253--271}
}

@article{ChaCascreNovPoc-intro,
  title={An introduction to total variation for image analysis},
  author={Chambolle, Antonin and Caselles, Vicent and Cremers, Daniel and Novaga, Matteo and Pock, Thomas and others},
  journal={Theoretical foundations and numerical methods for sparse recovery},
  volume={9},
  number={263-340},
  pages={227},
  year={2010}
}

@article{MSS,
  author  = {Mugnai, L. and Seis, C. and Spadaro, E.},
  title   = {Global solutions to the volume-preserving mean-curvature flow},
  journal = {Calc. Var. PDE},
  volume  = {55},
  year    = {2016},
  pages   = {18},
  eprint  = {1502.07232},
  archivePrefix = {arXiv},
  note    = {Numbering refers to the preprint}
}

@book{Maggi,
  author    = {Maggi, F.},
  title     = {Sets of Finite Perimeter and Geometric Variational Problems},
  publisher = {Cambridge University Press},
  year      = {2012}
}

@article{Chambolle,
  author  = {Chambolle, A.},
  title   = {An algorithm for mean curvature motion},
  journal = {Interfaces Free Bound.},
  volume  = {6},
  year    = {2004},
  pages   = {195--218},
  doi     = {10.4171/IFB/97}
}

@article{StuTon,
  title={On the existence of canonical multi-phase Brakke flows},
  author={Stuvard, Salvatore and Tonegawa, Yoshihiro},
  journal={Advances in Calculus of Variations},
  volume={17},
  number={1},
  pages={33--78},
  year={2024},
  publisher={De Gruyter}
}

@article{Tas26,
  title={A note on generalized BV mean curvature flow with a critical forcing term},
  author={Tashiro, Kiichi},
  journal={arXiv preprint arXiv:2608.05760},
  year={2026}
}

@article{KimTon,
  author  = {Kim, Lami and Tonegawa, Yoshihiro},
  title   = {On the mean curvature flow of grain boundaries},
  journal = {Annales de l'Institut Fourier},
  volume  = {67},
  number  = {1},
  pages   = {43--142},
  year    = {2017},
  doi     = {10.5802/aif.3077}
}

@article{MugRog,
  author       = {Mugnai, Luca and R{\"o}ger, Matthias},
  title        = {The {Allen--Cahn} Action Functional in Higher Dimensions},
  journaltitle = {Interfaces and Free Boundaries},
  volume       = {10},
  number       = {1},
  pages        = {45--78},
  year         = {2008},
  doi          = {10.4171/IFB/179}
}

@article{Tas25,
  author       = {Tashiro, Kiichi},
  title        = {Time-Global Existence of Generalized {BV} Flow
                  via the {Allen--Cahn} Equation},
  journaltitle = {Interfaces and Free Boundaries},
  volume       = {27},
  number       = {1},
  pages        = {123--140},
  year         = {2025},
  doi          = {10.4171/IFB/521}
}

@book{Bra,
  author       = {Brakke, Kenneth A.},
  title        = {The Motion of a Surface by Its Mean Curvature},
  series       = {Mathematical Notes},
  number       = {20},
  publisher    = {Princeton University Press},
  location     = {Princeton, NJ},
  year         = {1978}
}

@book{Ton,
  author       = {Tonegawa, Yoshihiro},
  title        = {Brakke's Mean Curvature Flow: An Introduction},
  series       = {SpringerBriefs in Mathematics},
  publisher    = {Springer},
  location     = {Singapore},
  year         = {2019},
  doi          = {10.1007/978-981-13-7075-5}
}

@article{DeG,
  title={New problems on minimizing movements},
  author={De Giorgi, Ennio},
  journal={Boundary value problems for PDE and applications},
  year={1993},
  publisher={Masson}
}

@article{LauUll24,
  title={Generic Level Sets in Mean Curvature Flow are BV Solutions: A. Ullrich, T. Laux},
  author={Ullrich, Anton and Laux, Tim},
  journal={The Journal of Geometric Analysis},
  volume={34},
  number={12},
  pages={375},
  year={2024},
  publisher={Springer}
}

@article{ColMin12,
  title={Generic mean curvature flow I; generic singularities},
  author={Colding, Tobias H and Minicozzi, William P},
  journal={Annals of mathematics},
  pages={755--833},
  year={2012},
  publisher={JSTOR}
}

@article{Whi09,
  author  = {White, Brian},
  title   = {Currents and Flat Chains Associated to Varifolds,
             with an Application to Mean Curvature Flow},
  journal = {Duke Mathematical Journal},
  volume  = {148},
  number  = {1},
  pages   = {41--62},
  year    = {2009},
  doi     = {10.1215/00127094-2009-019},
  mrnumber = {2515099}
}

@article{Whi00,
  title={The size of the singular set in mean curvature flow of mean-convex sets},
  author={White, Brian},
  journal={Journal of the American Mathematical Society},
  volume={13},
  number={3},
  pages={665--695},
  year={2000}
}

@article{LauOtt,
  author  = {Laux, Tim and Otto, Felix},
  title   = {Brakke's Inequality for the Thresholding Scheme},
  journal = {Calculus of Variations and Partial Differential Equations},
  volume  = {59},
  year    = {2020},
  number  = {1},
  pages   = {39},
  doi     = {10.1007/s00526-019-1673-5}
}

@article{HenLau,
  title={A new varifold solution concept for mean curvature flow: Convergence of the Allen--Cahn equation and weak-strong uniqueness},
  author={Hensel, Sebastian and Laux, Tim},
  journal={Journal of Differential Geometry},
  volume={130},
  number={1},
  pages={209--268},
  year={2025},
  publisher={Lehigh University}
}

@article{LauPoi,
  title={De Giorgi varifold solutions to Mean Curvature Flow: a minimizing movements approach},
  author={Laux, Tim and Poiatti, Andrea},
  journal={arXiv preprint arXiv:2607.03930},
  year={2026}
}

@article{JulNii24,
  title={Consistency of the flat flow solution to the volume preserving mean curvature flow},
  author={Julin, Vesa and Niinikoski, Joonas},
  journal={Archive for Rational Mechanics and Analysis},
  volume={248},
  number={1},
  pages={1},
  year={2024},
  publisher={Springer}
}

@article{ChaDegMor24,
  title={Minimizing movements for anisotropic and inhomogeneous mean curvature flows},
  author={Chambolle, Antonin and De Gennaro, Daniele and Morini, Massimiliano},
  journal={Advances in Calculus of Variations},
  volume={17},
  number={4},
  pages={1095--1129},
  year={2024},
  publisher={De Gruyter}
}

@book{AmbGigSav,
  author    = {Ambrosio, Luigi and Gigli, Nicola and Savar{\'e}, Giuseppe},
  title     = {Gradient Flows: In Metric Spaces and in the Space of Probability Measures},
  series    = {Lectures in Mathematics ETH Z{\"u}rich},
  edition   = {2},
  publisher = {Birkh{\"a}user},
  address   = {Basel},
  year      = {2008},
  doi       = {10.1007/978-3-7643-8722-8},
  isbn      = {978-3-7643-8721-1}
}

@article{Allard,
  author  = {Allard, W. K.},
  title   = {On the first variation of a varifold},
  journal = {Ann. of Math.},
  volume  = {95},
  year    = {1972},
  pages   = {417--491}
}

@article{CraDec,
  title={Anzellotti's pairing theory and the Gauss--Green theorem},
  author={Crasta, Graziano and De Cicco, Virginia},
  journal={Advances in Mathematics},
  volume={343},
  pages={935--970},
  year={2019},
  publisher={Elsevier}
}

@article{Bel,
  title={Extensions of schoen--simon--yau and schoen--simon theorems via iteration {\'a} la de giorgi},
  author={Bellettini, Costante},
  journal={Inventiones mathematicae},
  volume={240},
  number={1},
  pages={1--34},
  year={2025},
  publisher={Springer}
}

@article{Ilmanen94,
  author = {Ilmanen, Tom},
  title = {Elliptic regularization and partial regularity for motion by mean curvature},
  journal = {Mem. Amer. Math. Soc.},
  fjournal = {Memoirs of the American Mathematical Society},
  volume = {108},
  year = {1994},
  number = {520},
  pages = {x+90},
  issn = {0065-9266},
  coden = {MAMCAU},
  mrclass = {49Q20 (53A10 58E12)},
  mrnumber = {1196160 (95d:49060)},
  mrreviewer = {Wei Yue Ding},
  doi = {10.1090/memo/0520},
  url = {http://dx.doi.org/10.1090/memo/0520},
}

@article{Jul24,
  title = {Flat flow solution to the mean curvature flow with volume constraint},
  author = {Julin, Vesa},
  journal = {Advances in Calculus of Variations},
  volume = {17},
  number = {4},
  pages = {1543--1555},
  year = {2024},
  publisher = {De Gruyter},
}

@article{EvaSpr,
  author = {Evans, Lawrence C. and Spruck, Joel},
  title = {Motion of level sets by mean curvature. {I}},
  journal = {J. Differential Geom.},
  fjournal = {Journal of Differential Geometry},
  volume = {33},
  year = {1991},
  number = {3},
  pages = {635--681},
  issn = {0022-040X},
  coden = {JDGEAS},
  mrclass = {35K55 (35B05 35D05 53A10)},
  mrnumber = {92h:35097},
  mrreviewer = {Friedrich Sauvigny},
}

@article{CheGigGot,
  author = {Chen, Y. G. and Giga, Y. and Goto, S.},
  title = {Uniqueness and existence of viscosity solutions of generalized mean curvature flow equations},
  journal = {J. Differential Geom.},
  fjournal = {Journal of Differential Geometry},
  volume = {33},
  year = {1991},
  number = {3},
  pages = {749--786},
}

@article{DePLau,
  author = {De Philippis, G. and Laux, T.},
  title = {Implicit time discretization for the mean curvature flow of mean convex sets},
  journal = {Ann. Sc. Norm. Super. Pisa Cl. Sci. (5)},
  fjournal = {Annali della Scuola Normale Superiore di Pisa. Classe di Scienze. Serie V},
  volume = {21},
  year = {2020},
  pages = {911--930},
}

@article{ChaMorPonCPAM17,
  author = {Chambolle, Antonin and Morini, Massimiliano and Ponsiglione, Marcello},
  title = {Existence and uniqueness for a crystalline mean curvature flow},
  journal = {Comm. Pure Appl. Math.},
  fjournal = {Communications on Pure and Applied Mathematics},
  volume = {70},
  year = {2017},
  number = {6},
  pages = {1084--1114},
  issn = {0010-3640},
  mrclass = {35K55 (35A01 35A02 35K93 53C44)},
  mrnumber = {3639320},
  mrreviewer = {Xiaoliu Wang},
  doi = {10.1002/cpa.21668},
  url = {https://doi.org/10.1002/cpa.21668},
}

@article{EvansSpruckIV,
  author = {Evans, L. C. and Spruck, J.},
  title = {Motion of level sets by mean curvature. {IV}},
  journal = {J. Geom. Anal.},
  fjournal = {The Journal of Geometric Analysis},
  volume = {5},
  year = {1995},
  number = {1},
  pages = {77--114},
  issn = {1050-6926},
  mrclass = {35K55 (49Q15 53A07 58E99)},
  mrnumber = {96a:35077},
  mrreviewer = {Ben Andrews},
}

@article{RuOsFa:92,
  author = {Rudin, L. I. and Osher, S. and Fatemi, E.},
  title = {Nonlinear total variation based noise removal algorithms},
  journal = {Physica D},
  year = {1992},
  optkey = {},
  volume = {60},
  optnumber = {},
  pages = {259--268},
  optmonth = {},
  optnote = {},
  optannote = {},
}

@book{GilbargTrudinger,
  author = {Gilbarg, D. and Trudinger, N. S.},
  isbn = {3-540-41160-7},
  mrclass = {35-02 (35Jxx)},
  mrnumber = {1814364},
  note = {Reprint of the 1998 edition},
  pages = {xiv+517},
  publisher = {Springer-Verlag, Berlin},
  series = {Classics in Mathematics},
  title = {Elliptic partial differential equations of second order},
  year = {2001},
}

@article{BelKho,
  title = {Minimizing movements for mean curvature flow of droplets with prescribed contact angle},
  author = {Bellettini, Giovanni and Kholmatov, Sh Yu},
  journal = {Journal de Math{\'e}matiques Pures et Appliqu{\'e}es},
  volume = {117},
  pages = {1--58},
  year = {2018},
  publisher = {Elsevier},
}

@article{Soner93,
  author = {Soner, Halil Mete},
  title = {Motion of a set by the curvature of its boundary},
  journal = {J. Differential Equations},
  fjournal = {Journal of Differential Equations},
  volume = {101},
  year = {1993},
  number = {2},
  pages = {313--372},
  issn = {0022-0396},
  coden = {JDEQAK},
  mrclass = {58E50 (35B99 53A10 58E12 73B30)},
  mrnumber = {1204331 (94b:58038)},
  mrreviewer = {Anders Linn{\'e}r},
  doi = {10.1006/jdeq.1993.1015},
  url = {http://dx.doi.org/10.1006/jdeq.1993.1015},
}

\end{document}